\documentclass[letterpaper,12pt]{article}
\usepackage[margin=1.2in]{geometry}
\usepackage{enumerate}
\usepackage{enumitem}

\usepackage{amssymb}
\usepackage{amsmath}
\usepackage{amsthm}
\usepackage{eufrak}
\usepackage{mathrsfs}

\usepackage{tikz}
\usetikzlibrary{calc}
\usepackage[labelformat=simple]{subcaption}
\renewcommand\thesubfigure{(\alph{subfigure})}
\definecolor{red1}{RGB}{230,25,75}
\definecolor{green1}{RGB}{60,180,75}

\usepackage{hyperref}
\usepackage{cleveref} 
\crefname{graph}{graph}{graphs}
\Crefname{graph}{Graph}{Graphs}

\newtheorem{theorem}{Theorem}[section]
\newtheorem{proposition}[theorem]{Proposition}

\newtheorem{lemma}[theorem]{Lemma}

\newtheorem{open}[theorem]{Open Problem}
\newtheorem{conjecture}[theorem]{Conjecture}

\usepackage[isbn=false,doi=false,style=numeric]{biblatex}

\addbibresource{MinorClosed.bib}

\begin{document}
\title{Strictly Metrizable Graphs are Minor-Closed}
\author{Maria Chudnovsky\thanks{Princeton University, Princeton, NJ 08544, USA. e-mail: mchudnov@math.princeton.edu. {Supported by  AFOSR grant FA9550-22-1-0083 and NSF Grant DMS-2348219. Part of this work was done  when the first author visited the Hebrew 
University of Jerusalem.} } \and{Daniel Cizma\thanks{Einstein Institute of Mathematics, Hebrew University, Jerusalem 91904, Israel. e-mail: daniel.cizma@mail.huji.ac.il.}} \and  {Nati Linial\thanks{School of Computer Science and Engineering, Hebrew University, Jerusalem 91904, Israel. e-mail: nati@cs.huji.ac.il.{~Supported in part by an ERC Grant 101141253, "Packing in Discrete Domains - Geometry and Analysis".}}}}

\maketitle

\begin{abstract}
    A {\em consistent path system} in a graph $G$ is an collection of paths, with exactly one path between any two vertices in $G$. A path system is said to be {\em consistent}
    if it is intersection-closed. We say that $G$ is {\em strictly metrizable} if every consistent path system in $G$ can be realized as the system of unique geodesics with respect to some assignment of positive edge weight. In this paper, we show that the family of strictly metrizable graphs is minor-closed.
\end{abstract}

\section{Introduction}
Let $G=(V,E)$ be a connected graph. Suppose you are given a sort of `road map' for 
it in the following sense: given any two vertices $u,v \in V$ this road map tells you the `chosen' path, $P_{u,v}$, between $u$ and $v$. This path need not be the shortest in any quantifiable
sense, e.g. w.r.t.\ edge count. Rather, this road map declares the path $P_{u,v}$ to be the best $(uv)$-path. A natural condition we would like this road map to have is consistency, namely, if $a\in P_{u,v}$ then $P_{u,v}$ should be the concatenation of $P_{u,a}$ and $P_{a,v}$. We call such `road maps' {\em consistent path systems}. 
Given such a path system it is suggestive to ask whether the paths in this
system can be made shortest in some measurable sense. More specifically, can the edges of $G$ be assigned positive weight so that the paths in our consistent path system are the {\em unique shortest paths} with respects to these edge weights.
We turn this to a question about graphs, and ask for which graphs $G$ it is the case that {\em every} consistent path system on $G$ is induced by some edge weights. 
Such graphs are said to be {\em strictly metrizable}. 

{\em Path complexes} and their corresponding homology theory were introduced
in \cite{GLMY}.
Building on this framework, Bodwin \cite{Bo}, has developed a structural theory of path systems which can be realized as the unique shortest paths in a weighted graph.
The study of strictly metrizable graphs was initiated in the paper \cite{CL}. 
It showed that strict metrizability is a rare property of graphs, but
nevertheless, there exist large non-trivial families of strictly metrizable graphs, e.g., outerplanar graphs. If we drop the requirement that the shortest paths be unique, we arrive
at the class of {\em metrizable} graphs. Further to \cite{CL}, such graphs were
investigated in \cite{CCL} where more detailed
structural information was derived. In particular, that paper shows that metrizable graphs have a very simple structure and are all nearly outerplanar.
In this paper we further analyze graph metrizability and show:
\begin{theorem}\label{thm:minor_closed}
The class of strictly metrizable graphs is minor-closed.
\end{theorem}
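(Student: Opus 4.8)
The plan is to prove the stronger statement that strict metrizability is preserved under each elementary minor operation. Since every minor is reached by a sequence of vertex deletions, edge deletions, and edge contractions, and since deleting a vertex amounts to deleting its incident edges and then discarding an isolated vertex (which is irrelevant, strict metrizability being read off each connected component), it suffices to treat \emph{edge deletion} and \emph{edge contraction}. Throughout I work in the contrapositive: starting from a bad consistent path system on the smaller graph, I produce one on the larger graph.

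Edge deletion is the easy case and I would dispatch it first. Let $H=G-e$ and suppose $H$ carries a consistent path system $\mathcal{Q}$ that no positive weighting realizes. As $E(H)\subseteq E(G)$ and $V(H)=V(G)$, the very same paths form a consistent path system on $G$. If some positive $w$ on $G$ realized it, then $w|_{E(H)}$ would realize $\mathcal{Q}$ on $H$, because every chosen path avoids $e$ and deleting $e$ only removes competitors, so each chosen path stays the unique $w$-geodesic in $H$; this contradicts the choice of $\mathcal{Q}$. (When $e$ is a bridge one argues component by component, extending $\mathcal{Q}$ arbitrarily across the cut and restricting a hypothetical realizer to the offending component.)

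The substance of the theorem is closure under contraction, and this is where I expect the real difficulty. Write $H=G/e$ with $e=xy$, let $z$ be the merged vertex, and let $\mathcal{Q}$ be a bad consistent path system on $H$. I would \emph{lift} $\mathcal{Q}$ to a consistent system $\mathcal{P}$ on $G$ as follows. Fix once and for all a side function $\sigma$ sending each neighbour $a$ of $z$ in $H$ to a vertex of $\{x,y\}\cap N_G(a)$ (forced when $a$ sees only one endpoint of $e$, chosen globally otherwise); this identifies $E(H)$ with $E(G)\setminus\{e\}$ via $za\mapsto\sigma(a)a$. Each path of $\mathcal{Q}$ meets $z$ at most once, and I lift it by replacing a passage $a\text{-}z\text{-}b$ by $a\text{-}\sigma(a)\text{-}b$ when $\sigma(a)=\sigma(b)$ and by $a\text{-}\sigma(a)\text{-}\sigma(b)\text{-}b$ (now traversing $e$) otherwise. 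I set $P_{x,y}=e$, and for a pair $\{z,u\}$ I lift $Q_{z,u}$ on the side $s$ selected by its first edge, declaring $P_{s,u}$ that lift and $P_{s',u}=e\cdot P_{s,u}$ for the opposite endpoint $s'$. The \textbf{first} technical step is to verify that $\mathcal{P}$ is consistent; the only nonroutine checks sit at the split vertices $x,y$, and they go through precisely because $\sigma$ is global and $e$ is inserted uniformly, so that the prescriptions $P_{s',u}=e\cdot P_{s,u}$ and $P_{x,y}=e$ are compatible with every sub-path relation.

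The \textbf{second} and harder step, which I regard as the main obstacle, is to show that non-realizability descends to $\mathcal{P}$ despite the extra edge $e$, an added degree of freedom that could a priori rescue realizability. I would argue through the linear-programming certificate: by LP duality, $\mathcal{Q}$ is unrealizable exactly when there are pairs $(p_i,q_i)$ with chosen paths $Q_i$, alternative $p_iq_i$-paths $A_i$, and coefficients $\lambda_i\ge 0$ (not all zero) with
\[
\sum_i \lambda_i\,\chi_{A_i}\ \le\ \sum_i \lambda_i\,\chi_{Q_i}\qquad\text{coordinatewise in }\mathbb{R}^{E(H)},
\]
where $\chi_R$ is the edge-indicator of $R$; such a relation forbids $\ell_w(A_i)>\ell_w(Q_i)$ for all $i$ at once. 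Pushing it forward along $\sigma$, the two sides keep the inequality off $e$, while on the $e$-coordinate it reads $\sum_i\lambda_i c_{A_i}\le\sum_i\lambda_i c_{Q_i}$, where $c_R\in\{0,1\}$ records whether the lift of $R$ crosses $e$. When $D:=\sum_i\lambda_i(c_{Q_i}-c_{A_i})\ge 0$ the lifted relation is a genuine certificate that $\mathcal{P}$ is unrealizable, contradicting strict metrizability of $G$. The delicate case is $D<0$, where the competitor paths cross $e$ more than the chosen ones and so acquire extra $w(e)$-length that aids realizability in $G$; here I would exploit the rigid role the construction forces on $e$ — the pair $\{x,y\}$ with $P_{x,y}=e$ bounds $w(e)$ above by every detour, while the crossing pairs $P_{s',u}=e\cdot P_{s,u}$ pin it from below — augmenting the certificate with these $e$-bearing pairs to raise the $e$-coordinate of the chosen side and restore $D\ge 0$. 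Making this balancing close uniformly over all bad systems $\mathcal{Q}$ is the step I expect to require the most care.
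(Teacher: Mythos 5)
Your reduction to edge contraction is sound (deletion is indeed subsumed by closure under topological minors, \cref{prop:top_minor}), and your lifting construction in the contraction case can be made consistent. But the second step --- the claim that non-realizability of $\mathcal{Q}$ descends to the lifted system $\mathcal{P}$ --- is not merely ``delicate'': it is false as a general lemma, and the paper itself contains the counterexample. Take $G$ to be the graph of \cref{fig:persistent_non_zero} and $e=(3,4)$; then $G/e$ is exactly $K_{2,4}$ (with parts $\{1,z\}$ and $\{2,5,6,7\}$), and every neighbor of the merged vertex sees exactly one endpoint of $e$, so your side function $\sigma$ is forced ($\sigma(2)=\sigma(6)=3$, $\sigma(5)=\sigma(7)=4$). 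If you take $\mathcal{Q}$ to be the standard non-realizable system on $K_{2,4}$ (\cref{fig:graph1} and the first certificate of the appendix), your lift of $\mathcal{Q}$ is precisely the system $\mathcal{P}$ of \cref{fig:persistent_non_zero} --- one can check pair by pair, e.g.\ $P_{2,7}=(2,3,4,7)$ crosses $e$ since $\sigma(2)\neq\sigma(7)$, while $P_{2,6}=(2,3,6)$ and $P_{5,7}=(5,4,7)$ stay on one side --- and the displayed positive weights strictly induce it. So the lift of a bad system can be strictly metric: the extra degree of freedom $w(e)>0$ genuinely rescues realizability, and your certificate-pushforward in the case $D<0$ cannot be repaired by any local augmentation with $e$-bearing pairs, because your argument up to that point never uses that $G$ is strictly metrizable. (In this example $G$ is of course not s.m., which is why \cref{thm:minor_closed} survives --- but your proposed proof of step two, being a statement about arbitrary $G$, would have to apply here and is refuted.) Any correct proof must inject global structure of s.m.\ graphs before, not after, the transfer step.

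This is exactly what the paper does, by an entirely different route that involves no lifting of path systems at all. It first proves a structure theorem (\cref{thm:structure}): every $2$-connected s.m.\ graph without compliant edges is $K_5$, $W_5$, or a subdivision of $K_{2,3}$, $K_4$, $W_4$ or $W_4'$ (via \cref{prop:disjoint_cycles} on disjoint cycles, the halo lemma \cref{lem:halo}, and Lov\'asz's characterization of graphs with no two disjoint cycles, \cref{lem:Lovasz}). Then, for each graph on this short list, it verifies directly that contracting an edge $xy$ with $\deg(x),\deg(y)\geq 3$ and deleting the compliant edges that the contraction creates (harmless by \cref{prop:compliant_can_be_deleted}) yields a \emph{topological minor of the original graph} $G$, which is s.m.\ by \cref{prop:top_minor}. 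In other words, where you try to lift a bad object from $G/e$ up to $G$, the paper pushes the contracted graph back down into $G$; the compliant-edge mechanism is what absorbs the parallel paths created by contraction (the same phenomenon that defeats your certificate at the $e$-coordinate). If you want to salvage your approach, you would need a characterization of precisely when a lift with $e$ persistent is realizable --- the paper's \cref{lem:pers_vs_0} shows this is equivalent to $\mathcal{P}/e$ being strictly metric, i.e., it is circular --- so the structural detour appears essential.
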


\subsection{Organization}
In \Cref{sec:prelim} we recall and develop some pertinent facts and tools.
\Cref{sec:results} is dedicated to our main results and proofs. Then,
we move to consider
edge weights that are not necessarily positive.
\Cref{sec:persistent_edges} explores the possibility of zero-weight edges.
Finally in \Cref{sec:open} we present some open problems, and ask in particular
(\Cref{open:neg}) how things change when negative edge weights are allowed.

\section{Preliminaries}\label{sec:prelim}
All graphs in this paper are finite and simple.  We recall some definitions and concepts from \cite{CL}. A {\em consistent path system} in a graph $G=(V,E)$ is a collection of paths $\mathcal{P}$ in $G$ satisfying two properties: 
 \begin{enumerate}[label={\arabic*)}]
     \item For every $u,v\in V$ there is exactly one $uv$-path in $\mathcal{P}$.
     The chosen $uv$-path and $vu$-path are identical, but in reverse. 
      \item The non-empty intersection of any two paths in $\mathcal{P}$ is a path in $\mathcal{P}$.
 \end{enumerate}
 A path system $\mathcal{P}$ of $G=(V,E)$ is {\em metric} if there is a positive weight function \mbox{$w:E\to \mathbb{R}_{>0}$} such that each path in $\mathcal{P}$ is a $w$-shortest path. Similarly, a path system $\mathcal{P}$ of $G=(V,E)$ is {\em strictly metric} if there is a positive weight function \mbox{$w:E\to \mathbb{R}_{>0}$} such that each path in $\mathcal{P}$ is the unique $w$-shortest path. We call $G$ {\em strictly metrizable} (s.m.), respectively metrizable, if every consistent path system in $G$ is strictly metric, respectively metric. It is known:
\begin{proposition}[\cite{CL}]
	\label{prop:top_minor}
Both the families of strictly metrizable graphs and metrizable graphs are closed under topological minors.   
\end{proposition}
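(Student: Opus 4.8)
The plan is to reduce the statement to closure under three elementary operations whose iteration generates the topological-minor relation: (i) deleting an edge, (ii) deleting a vertex, and (iii) suppressing a degree-2 vertex $v$ whose two neighbours $a,b$ are non-adjacent, i.e.\ replacing the path $a\text{-}v\text{-}b$ by a single edge $ab$. (When $a,b$ are adjacent, suppression coincides with deleting $v$, so this case is subsumed by (ii).) For each operation I would show that strict metrizability is preserved; the argument for the non-strict case is word-for-word the same with ``unique'' deleted, so I describe the strict case. In every case the scheme is the same: start from an arbitrary consistent path system on the smaller graph, lift it to a consistent path system on the larger (metrizable) graph, obtain realizing weights there, and push them back down.

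The two deletion steps are routine. Let $H$ arise from a strictly metrizable $G$ by deleting an edge or a vertex, and let $\mathcal{Q}$ be a consistent path system on $H$. For edge deletion $V(H)=V(G)$, and $\mathcal{Q}$ is already a consistent path system on $G$, since its paths remain valid paths in $G$. For vertex deletion $H=G-x$, I would fix a neighbour $a$ of $x$ and set $P_{x,u}=x\cdot Q_{a,u}$, and check that this ``star through $a$'' yields a consistent extension $\mathcal{P}$ of $\mathcal{Q}$ to $G$. Strict metrizability of $G$ then supplies weights $w$ realizing $\mathcal{P}$, and restricting $w$ to $E(H)$ realizes $\mathcal{Q}$: deleting edges or vertices only removes competing $st$-paths, so any path that was the unique $w$-shortest $st$-path in $G$ and survives in $H$ is still the unique shortest one there.

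The crux is suppression. Here $G$ is a subdivision of $H$ along the new edge $e^{*}=ab$, and I let $\phi$ be the lift that replaces $e^{*}$, wherever it is used, by the two-edge path $a\text{-}v\text{-}b$. Then $\phi$ is a length-preserving bijection between $st$-paths of $H$ and $st$-paths of $G$ for $s,t\neq v$, and $\phi(\mathcal{Q})$ satisfies the path-system axioms on all pairs avoiding $v$. The genuine work is to adjoin the paths emanating from $v$. For each $s$ I would set $P_{v,s}=v\cdot Q_{a,s}$ when $Q_{a,s}$ avoids $e^{*}$, and $P_{v,s}=v\cdot Q_{b,s}$ otherwise. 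The lemma that makes this well defined is that $Q_{a,s}$ and $Q_{b,s}$ cannot both use $e^{*}$: if they did, consistency of $\mathcal{Q}$ would force $Q_{a,s}=a\,b\cdots s$ and $Q_{b,s}=b\,a\cdots s$, but then the subpath of $Q_{a,s}$ from $b$ to $s$ would be $Q_{b,s}$ and would avoid $a$, a contradiction. Verifying that $\mathcal{P}=\phi(\mathcal{Q})\cup\{P_{v,s}\}_{s}$ is subpath- and intersection-closed is the main obstacle; it comes down to showing that the side ($a$ or $b$) assigned to $s$ is inherited by every vertex lying on $P_{v,s}$, which follows from applying consistency of $\mathcal{Q}$ to the relevant subpaths of $Q_{a,s}$ and $Q_{b,s}$.

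Finally I would transfer the weights. Strict metrizability of $G$ gives $w$ realizing $\mathcal{P}$; define $w'$ on $E(H)$ by $w'(e)=w(e)$ for $e\neq e^{*}$ and $w'(e^{*})=w(va)+w(vb)$. Since $\phi$ preserves length, it maps $w'$-shortest $st$-paths in $H$ bijectively onto $w$-shortest $st$-paths in $G$, so each $Q_{s,t}=\phi^{-1}(P_{s,t})$ is the unique $w'$-shortest $st$-path in $H$. Hence $\mathcal{Q}$ is strictly metric and $H$ is strictly metrizable. I expect the consistency verification in the suppression step to be the only delicate part; the deletions and the weight transfer are mechanical, and the same construction, read without the word ``unique'', handles the metrizable case.
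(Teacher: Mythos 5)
The paper offers no proof of this proposition at all: it is imported verbatim from \cite{CL}, so there is nothing in-paper to compare against, and your proposal must be judged on its own merits. Judged so, it is correct, and it is essentially the argument one expects behind the citation. The reduction of the topological-minor relation to edge deletion, vertex deletion, and suppression of a degree-$2$ vertex with non-adjacent neighbours is valid (the adjacent case is indeed subsumed by vertex deletion, and one only needs the routine remark that the intermediate graphs can be kept connected), and the lift--realize--push-down scheme is exactly what the definition calls for, since strict metrizability of $G$ quantifies over \emph{all} consistent systems on $G$. Your key lemma in the suppression step is right, with the right proof: if $Q_{a,s}$ uses $e^{*}$ then, $a$ being an endpoint, $ab$ must be its first edge, so $Q_{b,s}$ is its tail and avoids $a$. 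One verification your sketch glosses over, beyond side-inheritance: when $P_{v,s}$ is routed through $a$ and $P_{v,t}$ through $b$, you must check that the two lifted paths meet only in $\{v\}$, or the intersection axiom fails (the intersection would be disconnected). This does follow from consistency of $\mathcal{Q}$: if $Q_{a,s}$, which avoids $e^{*}$, met $Q_{a,t}=ab\cdots t$ at any vertex besides $a$, their intersection, being a single path through the common endpoint $a$, would be forced to contain the edge $ab$, a contradiction; note this step also uses the convention that the trivial one-vertex path at $v$ belongs to the system. Side-inheritance itself is immediate from subpath-closure, which, as you implicitly use, follows from the intersection axiom together with uniqueness of the path per pair. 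Finally, the weight transfer is sound because $\deg_G(v)=2$ forces every $st$-path of $G$ through $v$ to traverse both $va$ and $vb$, so $\phi$ really is a length-preserving bijection; with $w'(e^{*})=w(va)+w(vb)$, uniqueness of shortest paths passes back and forth, and dropping ``unique'' handles the metrizable case as you say.
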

\noindent
(Recall that $H$ is a topological minor of $G$ if $G$ contains a subdivision of $H$ as a subgraph.) Therefore, every graph that contains a subdivision of a non-s.m.\ graph is itself non-s.m. \\
A vertex is called a {\em branch vertex} if it has degree at least $3$. A {\em flat path}, aka a suspended path, in $G$ is a path 
whose internal vertices are of degree $2$ in $G$. We call an edge $xy$ in $G$ {\em compliant} if $x$ and $y$ are also connected by an additional flat path. As the following result shows, compliant edges have no effect on metrizability:
\begin{proposition}[\cite{CL}]\label{prop:compliant_can_be_deleted}
	If $e$ is a compliant edge in $G$, then $G$ is strictly metrizable if and only if 
    $G\setminus e$ is strictly metrizable.
\end{proposition}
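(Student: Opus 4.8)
The plan is to prove the two implications separately. Write $e=xy$ and let $Q$ be the flat $xy$-path witnessing that $e$ is compliant, so that $C:=e\cup Q$ is a cycle whose internal vertices (those of $Q$) have degree $2$ in $G$; put $G'=G\setminus e$, and note that $G'$ is connected because $x$ and $y$ remain joined by $Q$. The forward implication is immediate from \Cref{prop:top_minor}: since $G'$ is a subgraph of $G$ it is in particular a topological minor of $G$, and strictly metrizable graphs are closed under topological minors, so $G$ being s.m.\ forces $G'$ to be s.m. (Compliance is used here only to keep $G'$ connected.) All the work is therefore in the reverse implication.

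For the reverse implication, assume $G'$ is strictly metrizable, let $\mathcal P$ be an arbitrary consistent path system on $G$, and try to produce positive weights making every $P_{u,v}$ the unique shortest $uv$-path. The first step is a dichotomy governed by the single path $P_{x,y}$: if some $P_{u,v}$ traverses $e$, then $x,y$ are consecutive on it, so by consistency $P_{x,y}$ is the one-edge subpath $e$; hence $\mathcal P$ uses $e$ for some pair if and only if $P_{x,y}=e$. When $P_{x,y}\neq e$, the system $\mathcal P$ lives entirely in $G'$ and is a consistent path system there, so strict metrizability of $G'$ supplies weights $w'$ realizing it; I then extend $w'$ by giving $e$ a weight exceeding the $w'$-length of every path in $G'$, which makes any $uv$-path through $e$ outweigh $P_{u,v}$, and this case is done.

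The substantive case is $P_{x,y}=e$. Because the interior of $Q$ has degree $2$, I would first record that every path of $\mathcal P$ that uses $e$ meets $C$ in a single subarc of the form $q_a\cdots q_1\,x\,y\,q_k\cdots q_b$ (entering along $Q$, crossing $e$, leaving along $Q$), and define a rerouted system $\mathcal P'$ on $G'$ by replacing this arc with the complementary arc $q_a\cdots q_b$ of $C$, leaving every $e$-free path untouched. This yields genuine simple paths in $G'$, and the first technical step is to prove that $\mathcal P'$ is a consistent path system; the natural approach is to check that this ``untwisting'' commutes with passing to subpaths and to intersections arc-by-arc on $C$. Granting this, $\mathcal P'$ is strictly metric via some $w'$, and I can then exploit that, since the interior of $Q$ has degree $2$, the rest of $G'$ interacts with the cycle only through the cost of an $x$–$y$ connection. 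Accordingly I would split the weight design on $G$ into an \emph{off-cycle} part, inherited from $w'$ and depending on $C$ only through the scalar $xy$-connection cost, and an \emph{on-cycle} part, in which I choose $w(e)$ together with the $Q$-edge weights directly from the way $\mathcal P$ routes around $C$.

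The hard part will be this last case, and the difficulty is caused by \emph{twisted} paths, i.e.\ pairs for which $P_{u,v}$ reaches $e$ only after running backwards along part of $Q$ (for instance $P_{q_1,y}=q_1\,x\,y$). Such paths are genuinely consistent and cannot be legislated away; each one forces its $e$-using arc to beat the direct $Q$-subarc between the same endpoints, which imposes sign constraints on partial sums of the cycle weights and rules out naive choices such as $w(e)=w(Q)-\varepsilon$ with uniform $Q$-weights. I therefore expect the crux to be a simultaneous-feasibility statement: there is a single positive weighting of $C$ for which $e$ is the unique shortest $xy$-path, every twisted arc beats its $Q$-complement, and the resulting $xy$-connection cost still lies in the window that keeps each off-cycle path strictly shorter than any shortcut through $e$. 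The plan is to decouple this into a self-contained problem on the cycle $C$ augmented by an external $xy$-link, and to deduce its feasibility from the consistency of $\mathcal P$ itself, which nests the attachments of the cycle vertices to $x$ versus $y$ and so linearly orders the competing partial sums.
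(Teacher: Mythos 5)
Your forward implication and your first case are correct and complete: deleting $e$ leaves a connected subgraph, hence a topological minor, so \Cref{prop:top_minor} applies; and the dichotomy via $P_{x,y}$ is sound, since axiom (2) does yield subpath-closure (the intersection of $P_{u,v}$ with $P_{a,b}$ for $a,b\in P_{u,v}$ is a connected subgraph of $P_{a,b}$ containing both its endpoints, hence all of $P_{a,b}$), and the ``make $w(e)$ huge'' extension works when no path of $\mathcal P$ uses $e$. Note that this paper does not actually prove the proposition --- it is imported from \cite{CL} --- so your argument must stand on its own; judged that way, the substantive case $P_{x,y}=e$ is a plan rather than a proof, with two steps explicitly deferred: the consistency of the rerouted system $\mathcal P'$ (``granting this\ldots''), which is true but requires the case analysis you only gesture at, and the weight construction, where the real difficulty sits.

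The weight step contains a concrete flaw, not just an omission. First, the premise that ``the rest of $G'$ interacts with the cycle only through the scalar $xy$-connection cost'' is false: interior vertices of $Q$ reach the rest of the graph through \emph{both} $x$ and $y$, so every mixed pair (a $Q$-interior vertex against an outside vertex) imposes constraints involving the individual arc-weights, and re-choosing the on-cycle weights perturbs all of these comparisons, not one scalar. Second, the alternative of inheriting the $w'$-weights on $Q$ and tuning only $w(e)$ can be infeasible. Writing $Q=xq_1\cdots q_m y$, $A_a$ for the $w'$-length of $q_a\cdots q_1x$, $B_b$ for that of $yq_m\cdots q_b$, $M_{a,b}$ for the direct arc, and $L=A_a+M_{a,b}+B_b$, each pair routed through $e$ forces $w(e)<L-2(A_a+B_b)$ while each pair routed along its direct arc forces $w(e)>L-2(A_c+B_d)$; subpath-closure makes the $e$-using pairs a staircase (down-closed in $a$, up-closed in $b$), but this \emph{partial} order does not linearly order the competing sums $A_a+B_b$ across the staircase boundary, contrary to your closing claim. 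For instance, a consistent $\mathcal P$ can have $P_{q_3,q_8}=q_3q_2q_1xyq_m\cdots q_8$ while $P_{q_1,q_5}=q_1\cdots q_5$ is direct (one checks all pairwise intersections are fine); then a nonempty window for $w(e)$ requires $w'(q_1q_2)+w'(q_2q_3)<w'(q_5q_6)+w'(q_6q_7)+w'(q_7q_8)$, an inequality that an arbitrary $w'$ realizing $\mathcal P'$ need not satisfy. So the ``simultaneous-feasibility statement'' you flag as the crux is exactly where the whole proof lives: one must choose the cycle weights afresh, jointly with the constraints coming from outside distances and from the mixed pairs, and derive feasibility from the consistency of $\mathcal P$; as written, the reverse implication remains unproven.
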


{\em Halos} are useful in our analysis. Let $G=(V,E)$ be a graph and $x_1,x_2 \in V$. We call a subgraph $H$ of $G$ an {\em $x_1x_2$-halo} (\cref{fig:xy-halo}) if
\begin{enumerate}
	\item $H$ consists of a cycle $C$ along with two paths $P_1$ and $P_2$
	\item For $i=1,2$, the path $P_i$ is an $(x_i u_i)$-path with $u_i\in C$ and is otherwise disjoint from $C$. (We allow $P_i$ to consist of single vertex if $x_i \in C$).
	\item The paths $P_1$ and $P_2$ are vertex disjoint and their respective endpoints $u_1,u_2\in C$ are of distance at least $2$ in $C$.
\end{enumerate}
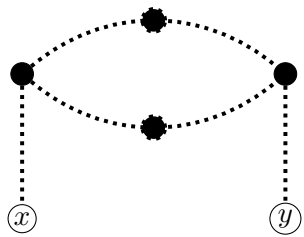
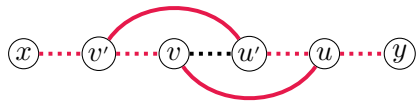
\begin{figure}[h]
	\centering
	\begin{subfigure}[t]{0.28\textwidth}
		\centering
		\begin{tikzpicture}
		\def\vtxSize{0.3cm}
		\def\r{1.75cm}
		\def\edgewidth{1.5pt}
		\node[draw,circle,minimum size=\vtxSize,inner sep=1pt,fill]  (1) at (-\r,0) {};
		\node[draw,circle,minimum size=\vtxSize,inner sep=1pt,fill]  (2) at (\r,0) {};
		\node[draw,circle,minimum size=\vtxSize,inner sep=1pt] (x) at (-\r,-1.1*\r) {\small$x$};
		\node[draw,circle,minimum size=\vtxSize,inner sep=1pt]  (y) at (\r,-1.1*\r) {\small$y$};

		\draw [line width=\edgewidth,dotted] (1) to[out=40, in=140]  node [pos=0.5,draw,circle,minimum size=\vtxSize,inner sep=1pt,fill]{} (2) ;
		\draw [line width=\edgewidth,dotted] (1) to[out=-40, in=-140] node [pos=0.5,draw,circle,minimum size=\vtxSize,inner sep=1pt,fill]{} (2);

		\draw [line width=\edgewidth,dotted] (1) -- (x) ;
		\draw [line width=\edgewidth,dotted] (2) -- (y);

		\end{tikzpicture}
		\caption{An $xy$-halo.}
		\label{fig:xy-halo}
		
	\end{subfigure}
		\hspace{30mm}
		\begin{subfigure}[t]{0.28\textwidth}
		\centering
		\begin{tikzpicture}
			\def\vtxSize{0.4cm}
		\def\r{2.5cm}
		\def\edgewidth{1.5pt}
		\node[draw,circle,minimum size=\vtxSize,inner sep=1pt]  (x) at (-\r,0) {\small$x$};
		\node[draw,circle,minimum size=\vtxSize,inner sep=1pt]  (y) at (\r,0) {\small$y$};
		\node[draw,circle,minimum size=\vtxSize,inner sep=1pt]  (v') at ($(x)!0.2!(y)$) {\small$v$\tiny$'$};
		\node[draw,circle,minimum size=\vtxSize,inner sep=1pt]  (v) at ($(x)!0.4!(y)$) {\small$v$};
		\node[draw,circle,minimum size=\vtxSize,inner sep=1pt]  (u') at ($(x)!0.6!(y)$) {\small$u$\tiny$'$};
		\node[draw,circle,minimum size=\vtxSize,inner sep=1pt]  (u) at ($(x)!0.8!(y)$) {\small$u$};

		\draw [line width=\edgewidth,dotted,red1] (x) -- (v') ;
		\draw [line width=\edgewidth,dotted,red1] (v') -- (v) ;
		\draw [line width=\edgewidth,dotted] (v) -- (u') ;
		\draw [line width=\edgewidth,dotted,red1] (u') -- (u) ;
		\draw [line width=\edgewidth,dotted,red1] (u) -- (y) ;
		\draw [line width=\edgewidth,red1] (v') to[out=55, in=125] (u') ;
		\draw [line width=\edgewidth,red1] (v) to[out=-55, in=-125] (u) ;
		
		\end{tikzpicture}
		\caption{The path $P$ along with the edges $uv$ and $u'v'$ form an $xy$-halo.}
		\label{fig:example_of_halo}
		
	\end{subfigure}
	\caption{Examples of halos in graphs.}
\end{figure}
We prove the following technical lemma concerning halos:
\begin{lemma}\label{lem:halo}
	Suppose that the graph $G=(V,E)$ is $2$-connected, and has no compliant edges. For $x,y\in V$, let $S$ be the vertex set of a connected component of $G-\{x,y\}$ and $H$ the subgraph induced by $S \cup \{x,y\}$. If there is a non-flat $(xy)$-path of length at least $2$ in $H$, then H has a subgraph that is an $xy$-Halo. 
\end{lemma}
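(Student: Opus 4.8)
The plan is to build a halo around a branch vertex of the given non-flat path, and then force the two feet of the halo apart using the no-compliant-edge hypothesis. Write $P = p_0 p_1 \cdots p_k$ for the non-flat $(xy)$-path, with $p_0 = x$, $p_k = y$ and $k \ge 2$. Since $P$ is non-flat it has an internal vertex $v = p_j$ of degree at least $3$ in $G$; as $v \in S$ and $S$ is an \emph{entire} component of $G - \{x,y\}$, every neighbor of $v$ lies in $H$, so $v$ is a branch vertex of $H$ too. Pick a neighbor $w$ of $v$ other than $p_{j-1}, p_{j+1}$. I would first route $w$ back to $P$: there is a path from $w$ to some $t \in V(P) \setminus \{v\}$ that avoids $v$ and is internally disjoint from $P$. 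Indeed, the component of $w$ in $H - v$ must meet $V(P) \setminus \{v\}$, since otherwise that component would attach to the rest of $G$ only through $v$ (again using that $S$-vertices have no neighbors outside $H$), making $v$ a cut vertex and contradicting $2$-connectivity. This detour, together with the subpath of $P$ between $t$ and $v$, forms a cycle $C$, and the two outer pieces of $P$ — from $x$ and from $y$ to the points where $P$ meets $C$ — are vertex-disjoint paths meeting $C$ only at their endpoints. This is a \emph{pre-halo}: it satisfies every requirement of an $xy$-halo except possibly that its two feet lie at distance at least $2$ on $C$.

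The entire difficulty is thus to guarantee that the two feet are non-adjacent on $C$, and this is where the hypothesis enters, through one observation: if an edge $ab$ of $G$ is joined by an additional path of length at least $2$, that path cannot be flat (else $ab$ would be compliant), so it contains a branch vertex. I would split into two cases. If no branch vertex of $P$ has a neighbor off $P$ — so all extra edges at branch vertices are chords of $P$ — take a chord $p_a p_b$ of minimum span; the segment $p_a \cdots p_b$ is non-flat, so it contains a branch vertex $p_m$, and minimality forces the chord at $p_m$ to \emph{cross} $p_a p_b$. A crossing pair of chords yields an $xy$-halo immediately: route the cycle through both chords and hang the $x$- and $y$-tails at the two extreme endpoints, so that both arcs of the cycle between the feet have length at least $2$.

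In the remaining case some branch vertex $v = p_j$ has an off-path neighbor $w$; let $M$ be the component of $w$ in $H - V(P)$ and examine where $M$ touches $P$. If $M$ touches $P$ at some vertex at distance at least $2$ from $v$, or at both $p_{j-1}$ and $p_{j+1}$, then routing the detour accordingly makes the two feet non-adjacent and the halo is obtained directly. Otherwise $M$ touches $P$ only at $v$ and one neighbor, say $p_{j-1}$; then $\{v, p_{j-1}\}$ is a $2$-cut confining $M$, and since the edge $v p_{j-1}$ is joined by a (necessarily non-flat) path through $M$, the subgraph $H' = G[M \cup \{v, p_{j-1}\}]$ is a strictly smaller instance of the lemma for the pair $(v, p_{j-1})$. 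I would therefore run the whole argument by induction on $|V(H)|$: the inductive hypothesis gives a $(v p_{j-1})$-halo inside $H'$, and I recover an $xy$-halo by extending its two tails outward along $P$ to $x$ and to $y$. Since $M$ is disjoint from $P$ and the recursive tails are already disjoint, these extensions meet the cycle only at the old feet and stay disjoint from each other.

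I expect the main obstacle to be exactly this non-adjacency of the feet, and in particular two delicate points: proving that in the all-chords case a crossing pair of chords is unavoidable, and setting up the induction with \emph{shifted} endpoints so that the recursively produced tails can be re-extended back to the original $x$ and $y$. The supporting routing claims — existence of the detours, the precise verification of the halo axioms for the pre-halo, disjointness of the extended tails, and the boundary cases $j \in \{1, k-1\}$ where a foot coincides with $x$ or $y$ — are straightforward, and I would treat them as routine.
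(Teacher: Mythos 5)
Your proposal is correct, but in its main case it takes a genuinely different route from the paper. The paper chooses $P$ to be a \emph{longest} non-flat $(xy)$-path in $H$ and needs only two cases: if some vertex of $H$ lies off $P$, a fan of two internally disjoint paths from it to $P$ has feet $z_1,z_2$ that maximality forces to be non-adjacent on $P$ (an adjacent pair would let one swap the edge $z_1z_2$ for the detour and obtain a longer non-flat path), so $P$ together with the detour is already an $xy$-halo; if $P$ covers all of $H$, the paper runs exactly your chord argument — a chord $uv$ with endpoints closest along $P$ (your minimum-span chord), non-compliance forcing an interior branch vertex of the spanned segment, minimality forcing its chord to cross, and the tails hung at the two extreme endpoints. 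So your Case A coincides with the paper's second case, while your Case B replaces the extremal step by recursion: you take an arbitrary non-flat path, classify how the component $M$ of an off-path neighbor attaches to $P$, dispatch attachments at $P$-distance $\ge 2$ from $v$ (or at both $p_{j-1}$ and $p_{j+1}$) directly, and in the sole remaining configuration correctly observe that $M$ is a full component of $G-\{v,p_{j-1}\}$ (its vertices lie in $S$, so all their $G$-neighbors stay in $H$), that it carries a non-flat $(v\,p_{j-1})$-path of length at least $2$ (a flat one would make the edge $vp_{j-1}$ compliant), and that this is a strictly smaller instance of the lemma, whose halo you re-extend along $P$. I checked the points you flagged as delicate and they go through: a chord with both endpoints inside the minimal segment would have strictly smaller span, so the crossing is unavoidable; the extended tails meet the cycle only at the old feet, since the $P$-segments intersect $H'$ only in $\{v,p_{j-1}\}$ and the recursive tails $T_1, T_2$ avoid $p_{j-1}$ and $v$ respectively by their own disjointness; and the induction on $|V(H)|$ is well-founded because $H'$ omits every vertex of $P$ except two. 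The trade-off: the paper's extremal choice makes non-adjacency of the feet free in one line, with no induction and no tail-extension bookkeeping, whereas your argument never needs a globally maximal path and localizes the entire difficulty in $2$-cut-confined components, at the modest cost of the well-foundedness and re-extension checks.
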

\begin{proof}
	Let $P$ be a longest non-flat $(xy)$-path in $H$. First suppose that there exists a vertex $v\in H - P$. Since $G$ is $2$-connected, there exists two paths $Q_1$ and $Q_2$ from $v$ to $P$, which are disjoint except at $v$. Gluing these paths $Q_1$ and $Q_2$ at $v$, we obtain a path $P'$ with endpoints $z_1$ and $z_2$ in $P$. We observe $z_1$ and $z_2$ cannot be neighbors in $P$.  Otherwise, we can replace the edge $z_1z_2$ with $P'$ to obtain a strictly longer non-flat $xy$-path. Therefore, we may assume $z_1$ and $z_2$ are of distance at least $2$ in $P$. In this case, the union of $P$ and $P'$ form an $xy$-halo.\\
	So we may assume that $P$ contains all the vertices of $H$. By assumption, there is an internal vertex of $P$ which is a branch vertex. In particular, there exist vertices $u,v \in H$ such that $uv$ is an edge of $H$ but not of $P$.  
    Let us choose such $u$ and $v$ whose distance along $P$ is as small as possible, and let $Q_{u,v}$ be the $(uv)$-subpath of $P$. Since $uv$ is not a compliant edge, the path $Q_{u,v}$ cannot be flat. In particular, there exists an internal vertex $u'$
    of $Q_{u,v}$ with a neighbor $v'$ and $u'v'\notin Q_{u,v}$. Moreover, since we
    chose $u$ and $v$ as close as possible along $P$, it follows that $v'\notin Q_{u,v}$. But this implies that $P$ along with the edges $uv$ and $u'v'$ contain an $xy$-halo. 
    Indeed, if $v'$ is between $x$ and $v$ along $P$ then the $xv$ and $yu'$ subpaths of $P$ along with the edges $v'u'$ and $vu$ form an $xy$-halo, e.g. \cref{fig:example_of_halo}. Otherwise, $v'$ is between $y$ and $u$ along $P$ and the $yu$ and the $xu'$ subpaths of $P$ along with the edges $v'u'$ and $vu$ form an $xy$-halo.
\end{proof}
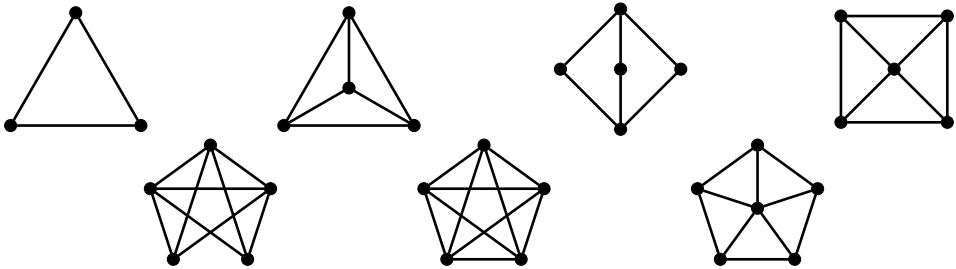
\begin{figure}[h]
    \centering
		\begin{minipage}[c]{0.12\textwidth}
			\centering
		\begin{tikzpicture}[scale=0.4, every node/.style={scale=0.4}]
			\def\vtxSize{.4cm}
			\def\edgeWidth{1pt}
			\def\radius{2.5cm}
			
			\node[draw,circle,minimum size=\vtxSize,inner sep=1pt,fill, color=black ](1) at (-0*360/3 +90: \radius) [scale=1] {};
			\node[draw,circle,minimum size=\vtxSize,inner sep=1pt,fill, color=black] (2) at (-1*360/3 +90: \radius) [scale=1]{};
			\node[draw,circle,minimum size=\vtxSize,inner sep=1pt,fill, color=black] (3) at (-2*360/3 +90: \radius)[scale=1] {};

			\draw [line width=\edgeWidth,-] (1) -- (2);
			\draw [line width=\edgeWidth,-] (2) -- (3);
			\draw [line width=\edgeWidth,-] (1) -- (3);

		\end{tikzpicture}
	\end{minipage}
        \hspace{15mm}
	\begin{minipage}[c]{0.12\textwidth}
		\centering
	\begin{tikzpicture}[scale=0.4, every node/.style={scale=0.4}]
		\def\vtxSize{.4cm}
		\def\edgeWidth{1pt}
		\def\radius{2.5cm}
		
		\node[draw,circle,minimum size=\vtxSize,inner sep=1pt,fill, color=black ](1) at (-0*360/3 +90: \radius) [scale=1] {};
		\node[draw,circle,minimum size=\vtxSize,inner sep=1pt,fill, color=black] (2) at (-1*360/3 +90: \radius) [scale=1]{};
		\node[draw,circle,minimum size=\vtxSize,inner sep=1pt,fill, color=black] (3) at (-2*360/3 +90: \radius)[scale=1] {};
		\node[draw,circle,minimum size=\vtxSize,inner sep=1pt,fill, color=black] (4) at (0,0)[scale=1] {};

		\draw [line width=\edgeWidth,-] (1) -- (2);
		\draw [line width=\edgeWidth,-] (1) -- (3);
		\draw [line width=\edgeWidth,-] (1) -- (4);
		\draw [line width=\edgeWidth,-] (2) -- (3);
		\draw [line width=\edgeWidth,-] (2) -- (4);
		\draw [line width=\edgeWidth,-] (3) -- (4);
		
	\end{tikzpicture}
\end{minipage}
\hspace{15mm}
\begin{minipage}[c]{0.12\textwidth}
	\centering
\begin{tikzpicture}[scale=0.4, every node/.style={scale=0.4}]
	\def\vtxSize{.4cm}
	\def\edgeWidth{1pt}
	\def\radius{2.5cm}
	
	\node[draw,circle,minimum size=\vtxSize,inner sep=1pt,fill, color=black] (1) at (-2,0) [scale=1] {};
	\node[draw,circle,minimum size=\vtxSize,inner sep=1pt,fill, color=black] (2) at (0,0) [scale=1]{};
	\node[draw,circle,minimum size=\vtxSize,inner sep=1pt,fill, color=black] (3) at (2,0)[scale=1] {};
	\node[draw,circle,minimum size=\vtxSize,inner sep=1pt,fill, color=black] (4) at (0,-2)[scale=1] {};
	\node[draw,circle,minimum size=\vtxSize,inner sep=1pt,fill, color=black] (5) at (0,2)[scale=1] {};

	\draw [line width=\edgeWidth,-] (1) -- (4);
	\draw [line width=\edgeWidth,-] (2) -- (4);
	\draw [line width=\edgeWidth,-] (3) -- (4);
	\draw [line width=\edgeWidth,-] (1) -- (5);
	\draw [line width=\edgeWidth,-] (2) -- (5);
	\draw [line width=\edgeWidth,-] (3) -- (5);

\end{tikzpicture}
\end{minipage}
\hspace{15mm}
\begin{minipage}[c]{0.12\textwidth}
	\centering
\begin{tikzpicture}[scale=0.4, every node/.style={scale=0.4}]
	\def\vtxSize{.4cm}
	\def\edgeWidth{1pt}
	\def\radius{2.5cm}
	
	\node[draw,circle,minimum size=\vtxSize,inner sep=1pt,fill, color=black ](1) at (-0*360/4 +45 : \radius) [scale=1] {};
	\node[draw,circle,minimum size=\vtxSize,inner sep=1pt,fill, color=black] (2) at (-1*360/4 +45: \radius) [scale=1]{};
	\node[draw,circle,minimum size=\vtxSize,inner sep=1pt,fill, color=black] (3) at (-2*360/4 +45: \radius)[scale=1] {};
	\node[draw,circle,minimum size=\vtxSize,inner sep=1pt,fill, color=black] (4) at (-3*360/4 +45: \radius)[scale=1] {};
	\node[draw,circle,minimum size=\vtxSize,inner sep=1pt,fill, color=black] (5) at (0,0)[scale=1] {};

	\draw [line width=\edgeWidth,-] (1) -- (2);
	\draw [line width=\edgeWidth,-] (1) -- (4);
	\draw [line width=\edgeWidth,-] (1) -- (5);
	\draw [line width=\edgeWidth,-] (2) -- (3);
	\draw [line width=\edgeWidth,-] (2) -- (5);
	\draw [line width=\edgeWidth,-] (3) -- (4);
	\draw [line width=\edgeWidth,-] (3) -- (5);
	\draw [line width=\edgeWidth,-] (4) -- (5);
	
\end{tikzpicture}
\end{minipage}
\\
\begin{minipage}[c]{0.12\textwidth}
	\centering
\begin{tikzpicture}[scale=0.4, every node/.style={scale=0.4}]
	\def\vtxSize{.4cm}
	\def\edgeWidth{1pt}
	\def\radius{2.1cm}
	
	\node[draw,circle,minimum size=\vtxSize,inner sep=1pt,fill, color=black](1) at (0*360/5 +162 : \radius) [scale=1] {};
	\node[draw,circle,minimum size=\vtxSize,inner sep=1pt,fill, color=black] (2) at (1*360/5 +162: \radius) [scale=1]{};
	\node[draw,circle,minimum size=\vtxSize,inner sep=1pt,fill, color=black] (3) at (2*360/5 +162: \radius)[scale=1] {};
	\node[draw,circle,minimum size=\vtxSize,inner sep=1pt,fill, color=black] (4) at (3*360/5 +162: \radius)[scale=1] {};
	\node[draw,circle,minimum size=\vtxSize,inner sep=1pt,fill, color=black] (5) at (4*360/5 +162: \radius)[scale=1] {};

	\draw [line width=\edgeWidth,-] (1) -- (2);
	\draw [line width=\edgeWidth,-] (1) -- (3);
	\draw [line width=\edgeWidth,-] (1) -- (4);
	\draw [line width=\edgeWidth,-] (1) -- (5);
	\draw [line width=\edgeWidth,-] (2) -- (4);
	\draw [line width=\edgeWidth,-] (2) -- (5);
	\draw [line width=\edgeWidth,-] (3) -- (4);
	\draw [line width=\edgeWidth,-] (3) -- (5);
	\draw [line width=\edgeWidth,-] (4) -- (5);
	
	\end{tikzpicture}
\end{minipage}
\hspace{15mm}
\begin{minipage}[c]{0.12\textwidth}
	\centering
\begin{tikzpicture}[scale=0.4, every node/.style={scale=0.4}]
	\def\vtxSize{.4cm}
	\def\edgeWidth{1pt}
	\def\radius{2.1cm}
	
	\node[draw,circle,minimum size=\vtxSize,inner sep=1pt,fill, color=black](1) at (0*360/5 +162 : \radius) [scale=1] {};
	\node[draw,circle,minimum size=\vtxSize,inner sep=1pt,fill, color=black] (2) at (1*360/5 +162: \radius) [scale=1]{};
	\node[draw,circle,minimum size=\vtxSize,inner sep=1pt,fill, color=black] (3) at (2*360/5 +162: \radius)[scale=1] {};
	\node[draw,circle,minimum size=\vtxSize,inner sep=1pt,fill, color=black] (4) at (3*360/5 +162: \radius)[scale=1] {};
	\node[draw,circle,minimum size=\vtxSize,inner sep=1pt,fill, color=black] (5) at (4*360/5 +162: \radius)[scale=1] {};

	\draw [line width=\edgeWidth,-] (1) -- (2);
	\draw [line width=\edgeWidth,-] (1) -- (3);
	\draw [line width=\edgeWidth,-] (1) -- (4);
	\draw [line width=\edgeWidth,-] (1) -- (5);
	\draw [line width=\edgeWidth,-] (2) -- (3);
	\draw [line width=\edgeWidth,-] (2) -- (4);
	\draw [line width=\edgeWidth,-] (2) -- (5);
	\draw [line width=\edgeWidth,-] (3) -- (4);
	\draw [line width=\edgeWidth,-] (3) -- (5);
	\draw [line width=\edgeWidth,-] (4) -- (5);
	
	\end{tikzpicture}
\end{minipage}
\hspace{15mm}
\begin{minipage}[c]{0.12\textwidth}
	\centering
\begin{tikzpicture}[scale=0.4, every node/.style={scale=0.4}]
	\def\vtxSize{.4cm}
	\def\edgeWidth{1pt}
	\def\radius{2.1cm}
	
	\node[draw,circle,minimum size=\vtxSize,inner sep=1pt,fill, color=black ](1) at (-0*360/5 +90 : \radius) [scale=1] {};
	\node[draw,circle,minimum size=\vtxSize,inner sep=1pt,fill, color=black] (2) at (-1*360/5 +90: \radius) [scale=1]{};
	\node[draw,circle,minimum size=\vtxSize,inner sep=1pt,fill, color=black] (3) at (-2*360/5 +90: \radius)[scale=1] {};
	\node[draw,circle,minimum size=\vtxSize,inner sep=1pt,fill, color=black] (4) at (-3*360/5 +90: \radius)[scale=1] {};
	\node[draw,circle,minimum size=\vtxSize,inner sep=1pt,fill, color=black] (5) at (-4*360/5 +90: \radius)[scale=1] {};
	\node[draw,circle,minimum size=\vtxSize,inner sep=1pt,fill, color=black] (6) at (0,0)[scale=1] {};

	\draw [line width=\edgeWidth,-] (1) -- (2);
	\draw [line width=\edgeWidth,-] (1) -- (5);
	\draw [line width=\edgeWidth,-] (1) -- (6);
	\draw [line width=\edgeWidth,-] (2) -- (3);
	\draw [line width=\edgeWidth,-] (2) -- (6);
	\draw [line width=\edgeWidth,-] (3) -- (4);
	\draw [line width=\edgeWidth,-] (3) -- (6);
	\draw [line width=\edgeWidth,-] (4) -- (5);
	\draw [line width=\edgeWidth,-] (4) -- (6);
	\draw [line width=\edgeWidth,-] (5) -- (6);

\end{tikzpicture}
\end{minipage}
    \caption{Up to adding compliant edges, all $2$-connected strictly metrizable graphs are subdivisions of one of the above graphs.}
    \label{fig:mainTheorem}
\end{figure}
\section{Main Results}\label{sec:results}
In order to prove that strictly metrizable graphs are minor-closed, we need the following theorem which describes the form of such graphs. (recall: $W_4'$ is the $5$-vertex graph with $9$ edges). 
\begin{theorem} \label{thm:structure}
    Every $2$-connected s.m.\ graph with
	no compliant edges is either $K_5$, $W_5$ or a subdivision of one of the following: $K_{2,3}$, $K_4$, $W_4$ or $W_4'$.
\end{theorem}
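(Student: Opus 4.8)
The plan is to run the entire argument through \Cref{prop:top_minor} and \Cref{prop:compliant_can_be_deleted}: strict metrizability passes to topological minors and is indifferent to compliant edges, so it suffices to assemble a finite list of ``obstructions'' (explicit non-s.m.\ graphs) and then prove, as a purely structural statement, that any $2$-connected graph with no compliant edges that contains none of them as a topological minor is one of the listed graphs. The first task is to pin down the obstructions. The most basic is the generalized theta graph $\Theta_4$ formed by two vertices joined by four internally disjoint paths, \emph{each of length at least $2$}; one certifies it is not s.m.\ by writing down a consistent path system that no positive weighting can realize as the unique geodesics. The length-$\ge 2$ hypothesis is crucial, since with a direct fourth edge that edge would be compliant and the graph is merely a subgraph of the s.m.\ graph $K_5$. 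Alongside $\Theta_4$ I would establish the ``large'' obstructions --- the wheel $W_6$ and the remaining $3$-connected graphs on six vertices, namely $K_{3,3}$, the triangular prism, and the octahedron --- together with the two \emph{rigidity} obstructions: every proper subdivision of $K_5$, and every proper subdivision of $W_5$, is non-s.m.

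With the obstructions in hand, the heart of the proof is a local analysis of every $2$-separation, which is exactly what \Cref{lem:halo} is built for. Fix a $2$-cut $\{x,y\}$ of our graph $G$. By the halo lemma each component $S$ of $G-\{x,y\}$ induces a subgraph that is either a bundle of flat $(xy)$-paths or else contains an $xy$-halo. I would first observe that, for metrizability purposes, a halo behaves like \emph{two} parallel flat paths: a halo together with one additional $(xy)$-path is only a subdivision of $K_{2,3}$ and is harmless, whereas a halo together with two further $(xy)$-paths, or two halos sharing the cut, already contains a forbidden pattern (the first of these is the three-branch-vertex ``double theta'', which the halo count exposes as non-s.m.). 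Assigning weight $1$ to each flat-path component and $2$ to each halo component, this yields the key quantitative bound: the total weight across any $2$-cut is at most $3$. In particular at most three parallel flat paths can join any pair of branch vertices, so path multiplicities are globally bounded and every $2$-separation has one of only a few explicit shapes.

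Once all $2$-separations are controlled, I would reduce to the $3$-connected case. A proper subdivision of a $3$-connected graph is only $2$-connected, so the $3$-connected members of the family appear in $G$ ``unsubdivided'', and the remaining task is to classify the $3$-connected s.m.\ graphs. A natural route is Tutte's wheel theorem, which builds every $3$-connected graph from a wheel by edge additions and vertex splits; the obstruction list then prunes the generation, since every $3$-connected graph on at least six vertices other than $W_5$ contains one of $W_6$, $K_{3,3}$, the prism, or the octahedron, leaving exactly $K_4$, $W_4$, $W_4'$, $K_5$, and $W_5$. Reassembling, a graph possessing a genuine $2$-cut is built from flat-path bundles hung on a single such $3$-connected core subject to the weight bound, which forces it to be a subdivision of $K_{2,3}$, $K_4$, $W_4$, or $W_4'$; the rigidity obstructions are precisely what prevent subdivisions of $K_5$ or $W_5$ from occurring, so those two graphs appear only undivided, matching the statement.

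I expect the difficulty to be concentrated in two places. First, each obstruction must actually be certified non-s.m., which means exhibiting a concrete consistent path system and proving that the resulting system of linear inequalities on the edge weights is infeasible; the theta graph and the prism/octahedron are the delicate cases and likely require a careful choice of path system. Second, the reassembly is where the real bookkeeping lives: one must check that every way of combining a halo with the permitted flat bundles, and every way of attaching $2$-separations to the $3$-connected core, stays within the six listed graphs and never creates a branch-vertex configuration outside the list. Getting the halo accounting tight enough that the ``double theta'' and its relatives are provably excluded --- rather than merely bounded --- is, I anticipate, the crux of the whole argument.
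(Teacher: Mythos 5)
There is a genuine gap, and it sits precisely where you predicted the crux would be: your obstruction list is too small to support the ``weight at most $3$ across any $2$-cut'' bound. When you combine an $xy$-halo with two further flat $(xy)$-paths, or put two halos on the same cut, the forbidden patterns that actually arise are the paper's \cref{fig:graph6} and \cref{fig:graph8} (compare \cref{fig:parrallel_plus_halo} and Case~1 of \cref{prop:disjoint_cycles}), and neither is a topological expansion of anything on your list. \Cref{fig:graph8} has exactly four vertices of degree $\ge 3$, each of degree exactly $3$: so it contains no topological $\Theta_4$ (two degree-$4$ vertices needed), no $K_{3,3}$, prism, or octahedron (six degree-$3$ branch vertices needed), no $W_6$, and no proper subdivision of $K_5$ or $W_5$. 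The same degree count disposes of \cref{fig:graph6}, whose single degree-$4$ vertex and two other branch vertices rule out every graph you named. So ``a halo behaves like two parallel flat paths'' is not a consequence of your obstructions; you would have to certify \cref{fig:graph6} and \cref{fig:graph8} (and, for the later bookkeeping, \cref{fig:graph4}, \cref{fig:graph5}, and \cref{fig:graph7}) as non-s.m.\ in their own right --- which is exactly why the paper's zoo (\cref{fig:zoo}) has nine entries rather than your seven.

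The second problem is the Tutte-wheel pruning, which as stated is circular. Tutte's generation relates a $3$-connected graph $G$ to a graph obtained by a vertex split, i.e.\ to $G/e$; an obstruction found topologically in $G/e$ lifts to $G$ only as a \emph{minor}, but at this point in the development you have only closure under \emph{topological} minors (\cref{prop:top_minor}) --- minor-closure is the main theorem, of which the present structure theorem is the key ingredient, so you cannot invoke it. Minor containment upgrades to topological containment only for subcubic obstructions; $W_6$, the octahedron, and $\Theta_4$ are not subcubic, and your sketch does not address this. Moreover, your unproven claim that every $3$-connected graph on at least six vertices other than $W_5$ contains $W_6$, $K_{3,3}$, the prism, or the octahedron is, in substance, Lov\'asz's classification in disguise. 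The paper takes the shorter route: it proves \cref{prop:disjoint_cycles} (two disjoint cycles force a subdivision from the zoo), which in one stroke also collapses the multi-core $2$-separation structure your ``single $3$-connected core'' reassembly quietly assumes, and then quotes \cref{lem:Lovasz} to classify the suppressed graph directly, using \cref{lem:halo} only to handle the multigraph wrinkle of parallel flat paths. Your decomposition-based outline could likely be completed, but only after enlarging the obstruction list as above and replacing the wheel-theorem pruning with an argument that stays inside topological minors.
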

We note that the statement and proof of this theorem are nearly identical to those of
a theorem from \cite{CCL} on the structure of (not necessarily strictly) metrizable graphs of order at least $11$. In contrast, \cref{thm:structure} holds for {\em strictly metrizable graphs of any order}.
The proof of \cref{thm:structure} relies on the following proposition, which substantially restricts the structure of strictly metrizable graphs. Again, a nearly identical proposition in \cite{CCL} speaks about (not necessarily strictly) metrizable graphs.
\begin{proposition} \label{prop:disjoint_cycles}
Suppose that the graph $G$ is $2$-connected,  has no compliant edges and contains two disjoint cycles $C_1$ and $C_2$. Then $G$ is not strictly metrizable.
\end{proposition}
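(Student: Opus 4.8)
The plan is to argue by contradiction: assume $G$ is strictly metrizable and produce a consistent path system that no positive weighting realizes as unique geodesics. I will lean on the two structural tools already available: strict metrizability is inherited by topological minors (\Cref{prop:top_minor}) and is insensitive to compliant edges (\Cref{prop:compliant_can_be_deleted}). Together these let me reduce the problem to exhibiting, inside $G$, a subdivision of one fixed small \emph{core} graph that I can certify to be non-strictly-metrizable by an explicit argument.

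First I would isolate the core. As $G$ is $2$-connected, Menger's theorem yields two vertex-disjoint paths $R_1,R_2$ from $C_1$ to $C_2$; with the two arcs of each cycle between its attachment points these assemble into a linkage $\Gamma$ of $C_1$ and $C_2$. Here the hypotheses become essential. If the attachment points of $R_1,R_2$ on $C_i$ were adjacent, one cycle-arc would be a single edge and $\Gamma$, \emph{as an abstract graph}, would reduce through \Cref{prop:compliant_can_be_deleted} to a single cycle --- hence be strictly metrizable --- so the linkage by itself need not be an obstruction. But in $G$ no edge is compliant, so such a single-edge arc is \emph{not} paralleled by a flat path; equivalently the opposite arc has an internal branch vertex. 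Feeding this into \Cref{lem:halo} (applied to the $2$-cut formed by the relevant pair of vertices) produces a halo, i.e.\ an extra internally disjoint connection. Iterating this, the no-compliant-edge hypothesis upgrades $\Gamma$ to a non-degenerate core: two cycles, each with arcs of length at least $2$, joined by two disjoint paths, or --- when a third independent connection is forced --- a subdivision of the triangular prism $K_3\,\square\,K_2$.

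It then remains to certify each core non-strictly-metrizable, which I would do by a cyclic inequality argument. On a core I define a consistent (that is, intersection-closed) path system $\mathcal P$ in which several pairs of vertices are routed the ``long way'', chosen so that for every edge $e$ of some fixed cycle $D$ the selected path $P_e$ between the endpoints of $e$ avoids $e$. If a positive weight $w$ realized $\mathcal P$ by unique shortest paths, then each such $e$ would have to satisfy the \emph{strict} inequality $w(e) > \mathrm{len}_w(P_e)$, since the edge $e$ is itself a competing path that must be strictly longer. Summing these inequalities over all $e\in D$, every edge weight appears the same number of times on both sides and cancels, leaving $0 < 0$. This contradiction shows no such $w$ exists, so the core --- and therefore $G$ via \Cref{prop:top_minor} --- is not strictly metrizable.

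I expect the difficulty to be twofold. The structural obstacle is proving that the $2$-connectivity and no-compliant-edge hypotheses \emph{always} force one of finitely many cores as a topological minor; this is a case analysis on how $C_1$, $C_2$ and the paths $R_1,R_2$ meet, with \Cref{lem:halo} invoked to manufacture the missing arcs whenever the raw linkage threatens to degenerate. The bookkeeping obstacle is designing $\mathcal P$ on each core so that it is at once a legitimate consistent path system and forces the contradictory cycle of strict inequalities: the routing of the cross pairs must be matched arc-for-arc so that intersection-closedness holds, and a single careless choice would either break consistency or fail to cancel in the summation.
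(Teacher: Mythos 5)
Your overall skeleton is the paper's: Menger gives the linkage $R_1,R_2$, \cref{lem:halo} handles the case where an adjacent attachment pair is a cutset, and one reduces to a finite list of non-s.m.\ topological minors certified by summed strict inequalities, concluding via \cref{prop:top_minor}. But your list of terminal cores is provably incomplete, and the missing cases are exactly where the real case analysis lives. You claim the hypotheses always force either (a) two cycles with all arcs of length at least $2$ joined by two disjoint paths (the paper's \cref{fig:graph8}) or (b) a prism subdivision (\cref{fig:graph2}). When the adjacent pair, say $x_2y_2$, does \emph{not} separate $C_2$ from the rest, however, the extra connection guaranteed by $2$-connectivity can attach at $x_1$ (and symmetrically at $y_2$ from the other side), and then neither core exists. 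Concretely, let $C_1=x_1u'y_1$ and $C_2=x_2uy_2$ be triangles with connectors $x_1x_2$, $y_1y_2$, $ux_1$ and $u'y_2$ (the degenerate instance of the paper's \cref{fig:disCycCase3_3}). Any three of the four connectors include two sharing an endpoint ($x_1$ or $y_2$), so no three disjoint paths join any two disjoint cycles and there is no prism subdivision; moreover every pair of vertex-disjoint cycles in this graph consists of triangles, so core (a) is absent as well. The only obstruction is a $K_{2,4}$ subdivision with $x_1,y_2$ on the small side (\cref{fig:graph1}). Likewise the paper's Case 2 produces the configuration of \cref{fig:disCycCase2_2}, certified only via \cref{fig:graph6}. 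Note that in both \cref{fig:graph1} and \cref{fig:graph6} all cycles pass through a single vertex, so these graphs contain neither of your two cores, and iterating \cref{lem:halo} cannot manufacture them, since the lemma applies only when the relevant pair is a cutset. Two of the four needed cores are therefore missing from your structural claim.

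The second gap is your certification template, which is not merely delicate but fails for every admissible choice on at least one unavoidable core. You propose to fix a cycle $D$, route the endpoints of each $e\in D$ the ``long way,'' and sum the inequalities $w(e)>w(P_e)$. On a bare cycle this is impossible (cycles are outerplanar, hence strictly metrizable; directly, the long arcs of two edges sharing a vertex $u$ intersect in a path avoiding $u$ together with the isolated vertex $u$, violating intersection-closedness). It also fails on $K_{2,4}$: every cycle there is a $4$-cycle $1$-$a$-$2$-$b$-$1$ with $1,2$ the degree-$4$ vertices, and every $1a$-path avoiding the edge $1a$ has the form $1$-$x$-$2$-$a$, while every $2a$-path avoiding $2a$ has the form $2$-$y$-$1$-$a$; whether or not $x=y$, the intersection of these two chosen paths is disconnected (a path plus the isolated vertex $a$, or three isolated vertices), so no consistent system realizes your scheme for even two adjacent edges of $D$. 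This is why none of the paper's appendix certificates has your edge-versus-path form: they compare pairs of multi-edge chosen and competing paths (for $K_{2,4}$, four inequalities of the shape $w_{2,3}+w_{2,5}<w_{1,3}+w_{1,5}$), with cancellation arranged across the whole family rather than around one cycle. So the certification step must be redone per core, essentially as in the paper's appendix, rather than by the uniform cyclic-cancellation recipe you describe.
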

\begin{proof}
        We will show that $G$ contains a subgraph which is a subdivision of one of graphs in \cref{fig:zoo}. From \cref{prop:top_minor} in then follows that $G$ is not strictly metrizable.
	Since $G$ is $2$-connected, there exists two disjoint paths, $R_1$ and 
	$R_2$, (possibly edges) between $C_1$ and $C_2$. Say $R_1$ connects between $x_1\in C_1$ and $x_2\in C_2$, and $R_2$ connects $y_1\in C_1$ and $y_2\in C_2$.
	Set $H = C_1\cup C_2 \cup R_1\cup R_2$, \cref{fig:disCycCase1_1}.
	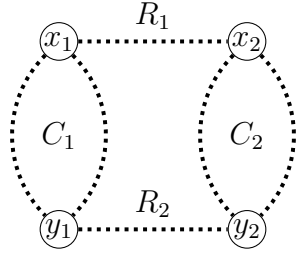
\begin{figure}[h]
		\centering
			\begin{tikzpicture}
			\def\vtxSize{0.5cm}
                \def\width{1.25cm}
			\def\height{1.25cm}
                \def\edgewidth{1.5pt}
			\node[draw,circle,minimum size=\vtxSize,inner sep=0pt] (x1) at (-\width,\height) {$x_1$};
                \node[draw,circle,minimum size=\vtxSize,inner sep=0pt] (y1) at (-\width,-\height) {$y_1$};
                \node[draw,circle,minimum size=\vtxSize,inner sep=0pt] (x2) at  (\width, \height){$x_2$};
                \node[draw,circle,minimum size=\vtxSize,inner sep=0pt] (y2) at  (\width, -\height){$y_2$};
                \node at (-\width, 0) {$C_1$};
                \node at (\width, 0) {$C_2$};

			\draw [line width=\edgewidth,dotted] (x1) -- (x2) node[midway, above]{$R_1$};
                \draw [line width=\edgewidth,dotted] (y1) -- (y2) node[midway, above]{$R_2$};;
                \draw [line width=\edgewidth,dotted] (x1) to[out=-135, in=135] (y1);
                \draw [line width=\edgewidth,dotted] (x1) to[out=-45, in=45] (y1);
                \draw [line width=\edgewidth,dotted] (x2) to[out=-135, in=135] (y2);
                \draw [line width=\edgewidth,dotted] (x2) to[out=-45, in=45] (y2);

			\end{tikzpicture}
			\caption{The subgraph $H$ of disjoint cycles.}
			\label{fig:disCycCase1_1}
		
	\end{figure}
	There are three cases to consider: 
	\begin{itemize}
		\item[]  Case 1. Neither $x_1y_1$ nor $x_2y_2$ are edges of $C_1$ resp.\ $C_2$.
		\item[] Case 2. $x_1,y_1$ are not adjacent in $C_1$ but $x_2y_2$ is an edge of $C_2$
		\item[]  Case 3. Both $x_1y_1$ and $x_2y_2$ are adjacent in $C_1$ and $C_2$, respectively
	\end{itemize}

	\begin{figure}[h]
		\centering
		\begin{subfigure}[t]{0.28\textwidth}
			\centering
			\begin{tikzpicture}
			\def\vtxSize{0.5cm}
				\def\width{1.25cm}
			\def\height{1.25cm}
				\def\edgewidth{1.5pt}
				\def\r{\width/2}
			\node[draw,circle,minimum size=\vtxSize,inner sep=0pt] (x1) at (-\width,\height) {$x_1$};
				\node[draw,circle,minimum size=\vtxSize,inner sep=0pt] (y1) at (-\width,-\height) {$y_1$};
				\node[draw,circle,minimum size=\vtxSize,inner sep=0pt] (x2) at  (\width, \height){$x_2$};
				\node[draw,circle,minimum size=\vtxSize,inner sep=0pt] (y2) at  (\width, -\height){$y_2$};
			\node[draw,circle,minimum size=.5*\vtxSize,inner sep=1pt,fill, color=black] (z) at  ($(x1)!.5!(x2)$){};
				\node[draw,circle,minimum size=\vtxSize,inner sep=0pt] (z') at  ($(x2)!.5!(y2) - (\r,0)$){$u$};

			\draw [line width=\edgewidth,dotted,red1] (x1) -- (z) ;
				\draw [line width=\edgewidth,dotted,red1] (x2) -- (z) ;
				\draw [line width=\edgewidth,dotted,red1] (y1) -- (y2);
				\draw [line width=\edgewidth,dotted,red1] (x1) to[out=-135, in=135] (y1);
				\draw [line width=\edgewidth,dotted,red1] (x1) to[out=-45, in=45] (y1);
				\draw [line width=\edgewidth,dotted] (x2) to[out=-135, in=90] (z');
				\draw [line width=\edgewidth,dotted,red1] (z') to[out=-90, in=135] (y2);
				\draw [line width=\edgewidth,-,red1] (x2) to[out=-45, in=45]   (y2);
				\draw [line width=\edgewidth,-,red1] (z) to (z');
	  

			\end{tikzpicture}
			\caption{A path from $Q$ to $R_1$ recovers the previous case.}
			\label{fig:disCycCase2_1}
			
		\end{subfigure}
			\hfill 
			\begin{subfigure}[t]{0.28\textwidth}
			\centering
			\begin{tikzpicture}
			\def\vtxSize{0.5cm}
				\def\width{1.25cm}
			\def\height{1.25cm}
				\def\edgewidth{1.5pt}
				\def\r{\width/2}
			\node[draw,circle,minimum size=\vtxSize,inner sep=0pt] (x1) at (-\width,\height) {$x_1$};
				\node[draw,circle,minimum size=\vtxSize,inner sep=0pt] (y1) at (-\width,-\height) {$y_1$};
				\node[draw,circle,minimum size=\vtxSize,inner sep=0pt] (x2) at  (\width, \height){$x_2$};
				\node[draw,circle,minimum size=\vtxSize,inner sep=0pt] (y2) at  (\width, -\height){$y_2$};

				\node[draw,circle,minimum size=\vtxSize,inner sep=0pt] (z') at  ($(x2)!.5!(y2) - (\r,0)$){$u$};

			\draw [line width=\edgewidth,dotted,red1] (x1) -- (x2) ;
				\draw [line width=\edgewidth,dotted,red1] (y1) -- (y2);
				\draw [line width=\edgewidth,dotted,red1] (x1) to[out=-135, in=135] (y1);
				\draw [line width=\edgewidth,dotted,red1] (x1) to[out=-45, in=45] (y1);
				\draw [line width=\edgewidth,dotted] (x2) to[out=-135, in=90] (z');
				\draw [line width=\edgewidth,dotted,red1] (z') to[out=-90, in=135] (y2);
				\draw [line width=\edgewidth,-,red1] (x2) to[out=-45, in=45]   (y2);
				\draw [line width=\edgewidth,-,red1] (x1) to (z');
				\node[draw,circle,minimum size=.5*\vtxSize,inner sep=1pt,fill, color=black] (v) at  ($(x1)!.5!(y1) - (\r,0)$){};
				\node[draw,circle,minimum size=.5*\vtxSize,inner sep=1pt,fill, color=black] (v') at  ($(x1)!.5!(y1) + (\r,0)$){};

			\end{tikzpicture}
			\caption{A path from $P$ to $x_1$ gives a subdivision of \cref{fig:graph6}.}
			\label{fig:disCycCase2_2}
			
		\end{subfigure}
		\hfill
		\begin{subfigure}[t]{0.28\textwidth}
			\centering
			\begin{tikzpicture}
			\def\vtxSize{0.5cm}
				\def\width{1.25cm}
			\def\height{1.25cm}
				\def\edgewidth{1.5pt}
				\def\r{\width/2}
			\node[draw,circle,minimum size=\vtxSize,inner sep=0pt] (x1) at (-\width,\height) {$x_1$};
				\node[draw,circle,minimum size=\vtxSize,inner sep=0pt] (y1) at (-\width,-\height) {$y_1$};
				\node[draw,circle,minimum size=\vtxSize,inner sep=0pt] (x2) at  (\width, \height){$x_2$};
				\node[draw,circle,minimum size=\vtxSize,inner sep=0pt] (y2) at  (\width, -\height){$y_2$};

				\node[draw,circle,minimum size=\vtxSize,inner sep=0pt] (z') at  ($(x2)!.5!(y2) - (\r,0)$){$u$};

			\draw [line width=\edgewidth,dotted,red1] (x1) -- (x2) ;
				\draw [line width=\edgewidth,dotted,red1] (y1) -- (y2);
				\draw [line width=\edgewidth,dotted,red1] (x1) to[out=-135, in=135] (y1);
				\draw [line width=\edgewidth,dotted,red1] (x1) to[out=-45, in=45] (y1);
				\draw [line width=\edgewidth,dotted] (x2) to[out=-135, in=90] (z');
				\draw [line width=\edgewidth,dotted,red1] (z') to[out=-90, in=135] (y2);
				\draw [line width=\edgewidth,-,red1] (x2) to[out=-45, in=45]   (y2);
				
				\node[draw,circle,minimum size=.5*\vtxSize,inner sep=1pt,fill, color=black] (v) at  ($(x1)!.5!(y1) - (\r,0)$){};
				\node[draw,circle,minimum size=.5*\vtxSize,inner sep=1pt,fill, color=black] (v') at  ($(x1)!.5!(y1) + (\r,0)$){};

				\draw [line width=\edgewidth,-,red1] (v') to (z');

			\end{tikzpicture}
			\caption{A path from $P$ to $C_1-\{x_1,y_2\}$ gives a subdivision of \cref{fig:graph2}.}
			\label{fig:disCycCase2_3}
			
		\end{subfigure}
		\caption{The various scenarios of Case 2, in the proof of \cref{prop:disjoint_cycles}. }
	\end{figure}
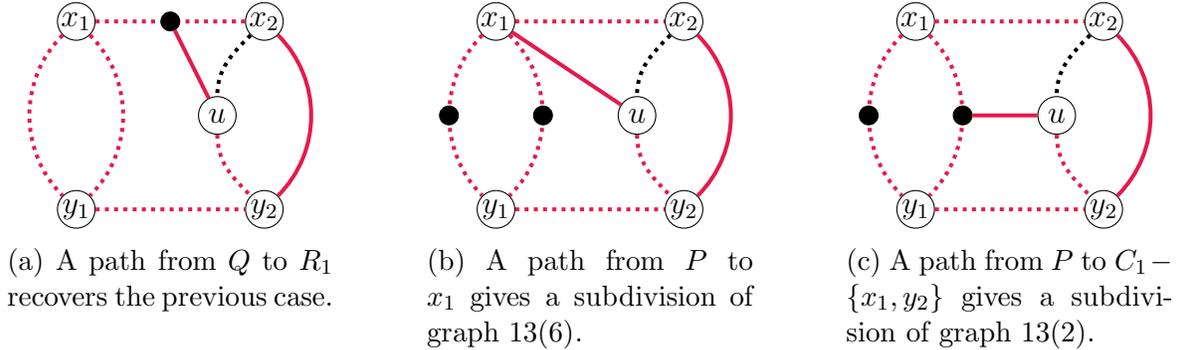
 \noindent 
	{\it Case 1:}
	In this case, $H$ is simply a subdvision of \cref{fig:graph8}.\\
	\noindent
	{\it Case 2:} Here we view $C_2$ as the union of an $(x_2y_2)$-path $P$, $|P|\geq 2$, 
and the edge $x_2y_2$, which, by assumption is non-compliant. The discussion proceeds now
according to whether $x_2,y_2$ separate $C_2$ from $H - C_2$ or not. If they do, then
by \cref{lem:halo} the component of $H-\{x_2,y_2\}$ containing $C_2$ must contain a $x_2y_2$-halo. Considering the union of this halo and $H-C_2$ we are back to Case 1 which is already settled. \\
Now we may assume that $x_2,y_2$ do not disconnect $C_2$ from the rest of $H$. In particular, there exists a path $Q$ (possibly an edge) from an internal vertex, $u$, of $P$ to a vertex $v$ in $H-P$ which is otherwise disjoint from $H$. Up to symmetries there are three possibilities to consider: $Q$ connects between $u$ and

\begin{enumerate}[label=\alph*]
    \item 
\hspace{-0.08in}) an internal vertex of $R_1$
    \item 
\hspace{-0.08in}) $x_1$
    \item
\hspace{-0.08in}) $C_1-\{x_1,y_1\}$.
\end{enumerate}

In a), the situation reduces to Case 1, \cref{fig:disCycCase2_1}, which is already settled. In b), G contains a subdivision of \cref{fig:graph6}, see \cref{fig:disCycCase2_2}. In c), $G$ contains a subdivision of \cref{fig:graph2}, \cref{fig:disCycCase2_3}.\\
	\noindent
	{\it Case 3:} Now each $C_i$ is the union of the edge $x_iy_i$ and an $(x_iy_i)$-path 
	$P_i$ of length at least $2$. First assume that $x_2,y_2$ separate $C_2$ from $C_1$. As argued above, by \cref{lem:halo} the component of $H-\{x_2,y_2\}$ containing $C_2$ must contain a $x_2y_2$-halo. As before taking the union of this halo and $H-C_2$ we are back to Case 2 which is settled. \\
	So we may assume that $x_2,y_2$  do not separate $C_2$ from $C_1$. This means that there is a path $Q$ (possibly an edge) connecting a vertex $u$ in $C_2-\{x_2,y_2\}$, i.e. an internal vertex of $P_2$, to a vertex in $H-C_2$. First assume that $Q$ is a path from $u$ to an internal vertex of $R_1$ (or $R_2$). 
	(Recall that $R_1$ and $R_2$ are disjoint paths between $C_1$ and $C_2$, 
	\cref{fig:disCycCase1_1}.)  
	This creates the same situation as in Case 2, see e.g. \cref{fig:disCycCase2_1}.
	
	So we may assume that $Q$ is a path from $u$ to $C_1$. Next suppose that $Q$ is a path connecting $u$ to an internal vertex of $P_1$. This gives us a
	subdivision of \cref{fig:graph2}, \cref{fig:disCycCase3_1}.
	We can therefore assume that the path $Q$ connects $u$ to the set $\{x_1,y_1\}$. The above arguments
	allow us to assume as well that $x_1,y_1$ do not separate $C_1$ from $C_2$, and that there is a path $Q'$
	from a vertex $u'\in C_1-\{x_1,y_1\}$ to the set $\{x_2,y_2\}$, which is internally disjoint from $H$. 
	There are now, up to symmetries, 2 subcases to consider: a) $Q$ is a $(ux_1)$-path and $Q'$ is a $(u'x_2)$-path, b) $Q$ is a $(ux_1)$-path and $Q'$ is $(u'y_2)$ path. In subcase a), $G$ contains a 
	subdivision of the prism, \cref{fig:disCycCase3_2}. In subcase b), notice that this graph contains a $K_{2,4}$, \cref{fig:graph1}, and is therefore not s.m., \cref{fig:disCycCase3_3}.
	
	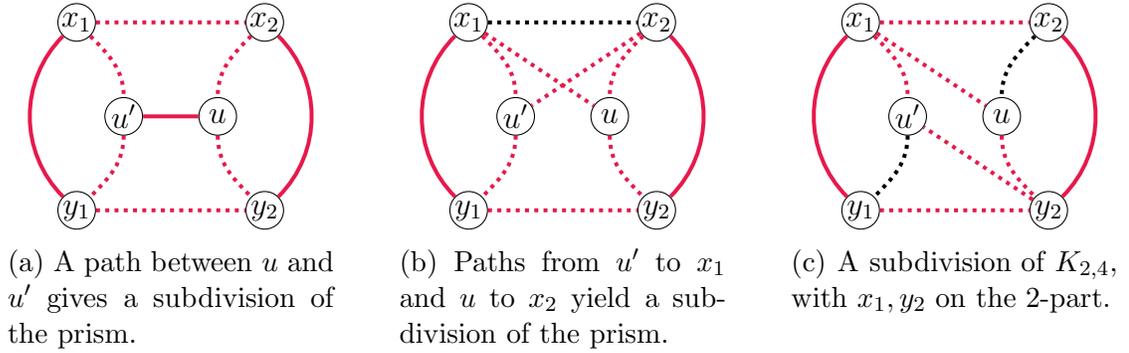
\begin{figure}
		\centering
		
		\hfill
            \begin{subfigure}[t]{0.28\textwidth}
			\centering
			\begin{tikzpicture}
			\def\vtxSize{0.5cm}
                \def\width{1.25cm}
			\def\height{1.25cm}
                \def\edgewidth{1.5pt}
                \def\r{\width/2}
			\node[draw,circle,minimum size=\vtxSize,inner sep=0pt] (x1) at (-\width,\height) {$x_1$};
                \node[draw,circle,minimum size=\vtxSize,inner sep=0pt] (y1) at (-\width,-\height) {$y_1$};
                \node[draw,circle,minimum size=\vtxSize,inner sep=0pt] (x2) at  (\width, \height){$x_2$};
                \node[draw,circle,minimum size=\vtxSize,inner sep=0pt] (y2) at  (\width, -\height){$y_2$};
		  \node[draw,circle,minimum size=\vtxSize,inner sep=0pt] (u1) at  ($(x1)!.5!(y1) + (\r,0)$){$u'$};
                \node[draw,circle,minimum size=\vtxSize,inner sep=0pt] (u2) at  ($(x2)!.5!(y2) - (\r,0)$){$u$};

			\draw [line width=\edgewidth,dotted,red1] (x1) -- (x2) ;
                \draw [line width=\edgewidth,dotted,red1] (y1) -- (y2);
                \draw [line width=\edgewidth,dotted,red1] (x1) to[out=-45, in=90] (u1);
                \draw [line width=\edgewidth,dotted,red1] (u1) to[out=-90, in=45] (y1);
                \draw [line width=\edgewidth,red1] (x1) to[out=-135, in=135] (y1);
                \draw [line width=\edgewidth,dotted,red1] (x2) to[out=-135, in=90] (u2);
                \draw [line width=\edgewidth,dotted,red1] (u2) to[out=-90, in=135] (y2);
                \draw [line width=\edgewidth,-,red1] (x2) to[out=-45, in=45]   (y2);
                \draw [line width=\edgewidth,-,red1] (u1) -- (u2);

			\end{tikzpicture}
			\caption{A path between $u$ and $u'$ gives a subdivision of the prism.}
			\label{fig:disCycCase3_1}
			
		\end{subfigure}
            \hfill
		\begin{subfigure}[t]{0.28\textwidth}
			\centering
			\begin{tikzpicture}
			\def\vtxSize{0.5cm}
                \def\width{1.25cm}
			\def\height{1.25cm}
                \def\edgewidth{1.5pt}
                \def\r{\width/2}
			\node[draw,circle,minimum size=\vtxSize,inner sep=0pt] (x1) at (-\width,\height) {$x_1$};
                \node[draw,circle,minimum size=\vtxSize,inner sep=0pt] (y1) at (-\width,-\height) {$y_1$};
                \node[draw,circle,minimum size=\vtxSize,inner sep=0pt] (x2) at  (\width, \height){$x_2$};
                \node[draw,circle,minimum size=\vtxSize,inner sep=0pt] (y2) at  (\width, -\height){$y_2$};
		  \node[draw,circle,minimum size=\vtxSize,inner sep=0pt] (u1) at  ($(x1)!.5!(y1) + (\r,0)$){$u'$};
                \node[draw,circle,minimum size=\vtxSize,inner sep=0pt] (u2) at  ($(x2)!.5!(y2) - (\r,0)$){$u$};

			\draw [line width=\edgewidth,dotted] (x1) -- (x2) ;
                \draw [line width=\edgewidth,dotted,red1] (y1) -- (y2);
                \draw [line width=\edgewidth,dotted,red1] (x1) to[out=-45, in=90] (u1);
                \draw [line width=\edgewidth,dotted,red1] (u1) to[out=-90, in=45] (y1);
                \draw [line width=\edgewidth, red1] (x1) to[out=-135, in=135] (y1);
                \draw [line width=\edgewidth,dotted,red1] (x2) to[out=-135, in=90] (u2);
                \draw [line width=\edgewidth,dotted,red1] (u2) to[out=-90, in=135] (y2);
                \draw [line width=\edgewidth,-,red1] (x2) to[out=-45, in=45]   (y2);
                \draw [line width=\edgewidth,-,red1, dotted] (x1) to (u2);
                \draw [line width=\edgewidth,-,red1, dotted] (x2) to (u1);

			\end{tikzpicture}
			\caption{Paths from $u'$ to $x_1$ and $u$ to $x_2$ yield a subdivision of the prism.}
			\label{fig:disCycCase3_2}
			
		\end{subfigure}
            \hfill 
            \begin{subfigure}[t]{0.28\textwidth}
			\centering
			\begin{tikzpicture}
			\def\vtxSize{0.5cm}
                \def\width{1.25cm}
			\def\height{1.25cm}
                \def\edgewidth{1.5pt}
                \def\r{\width/2}
			\node[draw,circle,minimum size=\vtxSize,inner sep=0pt] (x1) at (-\width,\height) {$x_1$};
                \node[draw,circle,minimum size=\vtxSize,inner sep=0pt] (y1) at (-\width,-\height) {$y_1$};
                \node[draw,circle,minimum size=\vtxSize,inner sep=0pt] (x2) at  (\width, \height){$x_2$};
                \node[draw,circle,minimum size=\vtxSize,inner sep=0pt] (y2) at  (\width, -\height){$y_2$};
		  \node[draw,circle,minimum size=\vtxSize,inner sep=0pt] (u1) at  ($(x1)!.5!(y1) + (\r,0)$){$u'$};
                \node[draw,circle,minimum size=\vtxSize,inner sep=0pt] (u2) at  ($(x2)!.5!(y2) - (\r,0)$){$u$};

			\draw [line width=\edgewidth,dotted,red1] (x1) -- (x2) ;
                \draw [line width=\edgewidth,dotted,red1] (y1) -- (y2);
                \draw [line width=\edgewidth,dotted,red1] (x1) to[out=-45, in=90] (u1);
                \draw [line width=\edgewidth,dotted] (u1) to[out=-90, in=45] (y1);
                \draw [line width=\edgewidth, red1] (x1) to[out=-135, in=135] (y1);
                \draw [line width=\edgewidth,dotted] (x2) to[out=-135, in=90] (u2);
                \draw [line width=\edgewidth,dotted,red1] (u2) to[out=-90, in=135] (y2);
                \draw [line width=\edgewidth,-,red1] (x2) to[out=-45, in=45]   (y2);
                \draw [line width=\edgewidth,-,red1, dotted] (x1) to (u2);
                \draw [line width=\edgewidth,-,red1, dotted] (y2) to (u1);

			\end{tikzpicture}
			\caption{A subdivision of $K_{2,4}$, with $x_1, y_2$ on the $2$-part.}
			\label{fig:disCycCase3_3}
			
		\end{subfigure}
		\caption{The subcases of Case 3, in the proof of \cref{prop:disjoint_cycles}.}
	\end{figure}

	\end{proof}

In order to prove \cref{thm:structure} we need the following lemma.
Consider the graphs obtained by adding $1$, $2$ and $3$ edges to the side of $3$ vertices in $K_{3,n}$. Call them
$K_{3,n}'$, $K_{3,n}''$ and $K_{3,n}'''$, respectively.
\begin{lemma}[Lov\'asz, \cite{L}]\label{lem:Lovasz}
If a graph $G$ with $\delta(G)\geq 3$ contains no disjoint cycles, 
then it is either $K_5$, $W_n$, $K_{3,n}$, $K_{3,n}'$, $K_{3,n}''$ or $K_{3,n}'''$.
\end{lemma}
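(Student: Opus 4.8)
The plan is to prove the statement directly: assuming $\delta(G)\ge 3$ and that $G$ has no two vertex-disjoint cycles, I would first force connectivity, then extract a bounded cycle transversal, and finally read off the six families by a structural case analysis. Throughout, the driving principle is that if $C$ is any cycle then $G-V(C)$ must be a forest, since a cycle inside $G-V(C)$ would be disjoint from $C$.

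\emph{Connectivity.} First I would show $G$ is $2$-connected. Since $\delta(G)\ge 3$ every component contains a cycle, so $G$ is connected. If $G$ had a cut vertex, I would pick two endblocks $B_1,B_2$ (leaves of its block structure), with unique cut vertices $c_1,c_2$. Every non-cut vertex of $B_i$ keeps all of its $\ge 3$ neighbours inside $B_i$, so $B_i$ is a genuine $2$-connected block and $B_i-c_i$ is connected with minimum degree $\ge 2$, hence contains a cycle. These two cycles live on disjoint non-cut vertex sets of distinct blocks, so they are vertex-disjoint, a contradiction. The same cycle-avoidance idea, applied to a $2$-cut $\{a,b\}$ and the components of $G-\{a,b\}$ (a cycle avoiding $a$ on one side and a cycle avoiding $b$ on another side are disjoint), lets me reduce to the essentially $3$-connected situation, isolating the few low-connectivity exceptions.

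\emph{A bounded transversal.} The engine is to fix a cycle $C$ and study its bridges (the components of $G-V(C)$ with their attachment vertices on $C$, together with the chords of $C$). If two bridges attach inside disjoint arcs of $C$, or attach in a genuinely crossing pattern, I can reroute through the bridges to produce a cycle off $C$ alongside a cycle on $C$, i.e. two disjoint cycles; ruling these configurations out forces all attachment points to cluster on $C$. Formalizing this as a Helly-type statement for pairwise-intersecting arcs, I expect to conclude that every cycle of $G$ is met by a fixed set $T$ with $|T|\le 3$ such that $F:=G-T$ is a forest. This transversal step is exactly the sharp $k=2$ Erdős--Pósa phenomenon, and it is where essentially all of the work sits.

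\emph{Reading off the families.} With such a $T$ and forest $F$ in hand, the degree and connectivity constraints pin down the structure. A single-vertex $T$ is impossible, since a leaf of $F$ would then have degree at most $2$ in $G$; more generally each leaf of $F$ must send at least two edges into $T$, which severely limits how $F$ can attach. I would then split on the shape of $F$ and its attachment to $T$: when $F$ is a disjoint union of paths sharing one common neighbour in $T$ I recover the wheels $W_n$; when $F$ is an independent set with every vertex adjacent to all of a natural $3$-element core, I recover $K_{3,n}$, and the at most three edges that this core may span internally yield precisely $K_{3,n}'$, $K_{3,n}''$ and $K_{3,n}'''$; the dense exceptional configuration that genuinely needs a $3$-vertex transversal collapses to $K_5$. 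The main obstacle is the transversal step together with the exhaustive verification in this last stage that no other attachment pattern survives the combined ``no two disjoint cycles'' and $\delta\ge 3$ constraints, and in particular that the number of extra edges inside the core cannot exceed three.
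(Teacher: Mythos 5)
First, a point of comparison: the paper contains no proof of this lemma at all --- it is imported as a black box, cited to Lov\'asz \cite{L}, and the classical proof in the literature indeed runs through the two stages you outline (a three-vertex set meeting all cycles, then a classification of how the remaining forest can attach). So your roadmap is pointed in the right historical direction. But as a proof it has a genuine gap, in fact two. The entire content of the theorem sits in the two steps you explicitly defer: the existence of a transversal $T$ with $|T|\le 3$ such that $G-T$ is a forest, and the exhaustive verification that only the six listed families survive. You say of the first that it ``is where essentially all of the work sits'' and of the second that it is ``the main obstacle''; a plan that names the hard steps without executing them does not establish the statement. (Your connectivity step, by contrast, is fine, and in fact your $2$-cut argument proves more than you claim: a cycle in $G[S_1\cup\{a\}]$ and one in $G[S_2\cup\{b\}]$ are always available and disjoint, so $G$ is outright $3$-connected --- there are no ``low-connectivity exceptions'' to isolate.)

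Worse, the one concrete mechanism you propose for the transversal step is false as stated. You claim that bridges attaching ``in a genuinely crossing pattern'' can be rerouted into two disjoint cycles. Two crossing chords $v_1v_3$ and $v_2v_4$ of a cycle $v_1v_2\cdots v_n$ never yield vertex-disjoint cycles: every cycle through one chord meets every cycle through the other (check the four arc choices). This is precisely why $K_5$ and the wheels exist as exceptions --- a Hamilton cycle of $K_5$ carries five pairwise crossing chords, and in $W_n$ all paths through the hub pairwise cross, yet neither graph contains two disjoint cycles. So your proposed dichotomy (disjoint arcs or crossing attachments $\Rightarrow$ two disjoint cycles) would wrongly exclude exactly the graphs the theorem is about, and the Helly-type argument built on it collapses. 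A correct treatment (as in Lov\'asz's paper, or Bollob\'as's \emph{Extremal Graph Theory}) chooses a cycle $C$ with an extremal property and analyzes its bridges with the exceptional configurations emerging as the cases where such rerouting fails; that analysis, together with the final attachment case-check (e.g., why $F$ cannot be a branching tree, why all of $F$ must see a common core in $T$, why at most three edges can sit inside the core), is the actual proof and is absent here.
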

\begin{proof}{[\Cref{thm:structure}]} 
Let $G$ be an  $2$-connected s.m.\ graph with no compliant edges.
We may assume that $G$ does not contain disjoint cycles by \cref{prop:disjoint_cycles}.\\ 
In order to use \cref{lem:Lovasz}, we need to eliminate all vertices of degree $2$ in $G$, 
since the lemma assumes $\delta(G)\geq 3$. If we suppress all vertices of degree $2$ in $G$,
the resulting graph $\tilde{G}$ indeed satisfies $\delta(\tilde{G})\geq 3$, though
it need not be a {\em simple} graph. Clearly, $\tilde{G}$ is
simple if and only if $G$ contains no parallel flat paths, so let us consider
what happens if $G$ contains parallel flat paths. Namely, there are at least two flat $(uv)$-paths in $G$ between a pair of
vertices $u,v\in G$. We show that there cannot be an additional vertex pair $\{x,y\} \neq \{u,v\}$ that is connected by parallel flat paths. Indeed,
$\{x,y\}$, $\{u,v\}$ cannot be disjoint, or else
these four flat paths form two disjoint cycles. So suppose that there are parallel flat paths between $x,y$ as well as between $x,v$. 
Since $G$ is $2$-connected, there must be another path from $v$ to $y$ disjoint from $x$. This $(vy)$-path must also be internally disjoint from the parallel flat paths between  $x,y$ and $x,v$ (because they are flat).  But this forms a subdivision of \cref{fig:graph6}.\\
So let $x,y$ be the unique pair of vertices between which there are parallel flat paths. 
If $x$ and $y$ are the only branch vertices of $G$
then $G$ is a subdivision of $K_{2,n}$, $n\geq 3$. Since $K_{2,4}$, \cref{fig:graph1}, is not s.m., we must have $n=3$ and $G$ is a subdivided $K_{2,3}$.\\ 
Now we may assume that $G$ has another branch vertex $z\neq x, y$. Since $G$
is $2$-connected, $z$ must reside on some $(xy)$-path. By \cref{lem:halo} the connected component of $G-\{x,y\}$ containing $z$ admits an $xy$-halo. Let us consider
this $xy$-halo, along with the other parallel flat $(xy)$-paths. There are
3 possibilities to consider.
The graph $G$ contains a subdivision of either \cref{fig:graph1}, \cref{fig:graph6} or 
\cref{fig:graph8}, see \cref{fig:parrallel_plus_halo}. In each case $G$ is not metrizable.
In the eventual case $G$ has no parallel flat paths. By
suppressing all its degree-$2$ vertices, we obtain a simple graph $\tilde{G}$ with $\delta(\tilde{G})\geq 3$, and 
by \Cref{lem:Lovasz}, $\tilde{G}$ is either $K_5$, $W_n$, $K_{3,n}$, $K_{3,n}'$, $K_{3,n}''$ or $K_{3,n}'''$. But  $K_{3,n}$, $K_{3,n}'$, $K_{3,n}''$ and $K_{3,n}'''$ are not s.m.\ for $n\geq 3$, since they contain $K_{3,3}$, \cref{fig:graph3}. The wheels
$W_n$ for $n\geq 6$ are excluded, since they contain a subdivision of \cref{fig:graph4}. The possibilities for $\tilde{G}$ that remain are precisely  $K_4$, $K_5$, $W_4$, $W_5$ and $W_4'$. Finally, we note that every strict subdivision of $K_5$ contains a subdivision of $K_{2,4}$, and any strict subdivision of $W_5$ contains a subdivision of either \cref{fig:graph4} or \cref{fig:graph5}.

\end{proof}

	\begin{figure}[h]
		\centering
		\begin{subfigure}[t]{0.28\textwidth}
			\centering
			\begin{tikzpicture}
			\def\vtxSize{0.5cm}
				\def\width{1.25cm}
			\def\height{1.25cm}
				\def\edgewidth{1.5pt}
				\def\s{2.5pt}
			\node[draw,circle,minimum size=\vtxSize,inner sep=0pt] (x) at (-\width,\height) {$x$};
				\node[draw,circle,minimum size=\vtxSize,inner sep=0pt] (y) at (-\width,-\height) {$y$};
				\node[draw,circle,fill,inner sep=\s] (x') at  (\width, \height){};
				\node[draw,circle,fill,inner sep=\s] (y') at  (\width, -\height){};

			\draw [line width=\edgewidth,dotted,red1] (x) to[out=-135, in=135] (y);
				\draw [line width=\edgewidth,dotted,red1] (x) to[out=-45, in=45] (y);
			\draw [line width=\edgewidth,dotted,green1] (x) -- (x') ;
				\draw [line width=\edgewidth,dotted,green1] (y) -- (y') ;
				\draw [line width=\edgewidth,dotted,green1] (x') to[out=-135, in=135] (y');
				\draw [line width=\edgewidth,dotted,green1] (x') to[out=-45, in=45] (y');

			\end{tikzpicture}
			\caption{Flat $(xy)$-paths and an $xy$-halo forming a subdivision of \cref{fig:graph8}.}
			\label{fig:halo_case_1}
			
		\end{subfigure}
			\hfill 
			\begin{subfigure}[t]{0.28\textwidth}
			\centering
			\begin{tikzpicture}
			\def\vtxSize{0.5cm}
				\def\width{1.25cm}
			\def\height{1.25cm}
				\def\edgewidth{1.5pt}
				\def\s{2.5pt}
                \def\angle1{63.4}
                \def\angle2{26.6}
			\node[draw,circle,minimum size=\vtxSize,inner sep=0pt] (x) at (0,\height) {$x$};
				\node[draw,circle,minimum size=\vtxSize,inner sep=0pt] (y) at (-\width,-\height) {$y$};

				\node[draw,circle,fill,inner sep=\s] (y') at  (\width, -\height){};

			\draw [line width=\edgewidth,dotted,red1] (y) to[out=83.4, in=-136.6] (x);
            \draw [line width=\edgewidth,dotted,red1] (y) to[out=43.4, in=-96.6] (x);
				\draw [line width=\edgewidth,dotted,green1] (y) -- (y') ;
				\draw [line width=\edgewidth,dotted,green1] (y') to[out=136.6, in=-83.4] (x);
				\draw [line width=\edgewidth,dotted,green1] (y') to[out=96.6, in=-43.4] (x);

			\end{tikzpicture}
			\caption{Flat $(xy)$-paths and an $xy$-halo forming a subdivision of \cref{fig:graph6}.}
			\label{fig:halo_case_2}
			
		\end{subfigure}
		\hfill
		\begin{subfigure}[t]{0.28\textwidth}
			\centering
			\begin{tikzpicture}
			\def\vtxSize{0.5cm}
				\def\width{1.25cm}
			\def\height{1.25cm}
				\def\edgewidth{1.5pt}
				\def\s{2.5pt}
                \def\angle1{63.4}
                \def\angle2{26.6}
			\node[draw,circle,minimum size=\vtxSize,inner sep=0pt] (x) at (0,\height) {$x$};
				\node[draw,circle,minimum size=\vtxSize,inner sep=0pt] (y) at (0,-\height) {$y$};

			\draw [line width=\edgewidth,dotted,red1] (y) to[out=150, in=-150] (x);
            \draw [line width=\edgewidth,dotted,red1] (y) to[out=120, in=-120] (x);

				\draw [line width=\edgewidth,dotted,green1] (y) to[out=60, in=-60] (x);
				\draw [line width=\edgewidth,dotted,green1] (y) to[out=30, in=-30] (x);

			\end{tikzpicture}
			\caption{Flat $(xy)$-paths and an $xy$-halo forming a subdivision of \cref{fig:graph1}.}
			\label{fig:halo_case_3}
			
		\end{subfigure}
		\caption{Parallel flat $(xy)$-paths (in red) and an $xy$-halo (in green) form a graph which is not s.m.}
        \label{fig:parrallel_plus_halo}
	\end{figure}
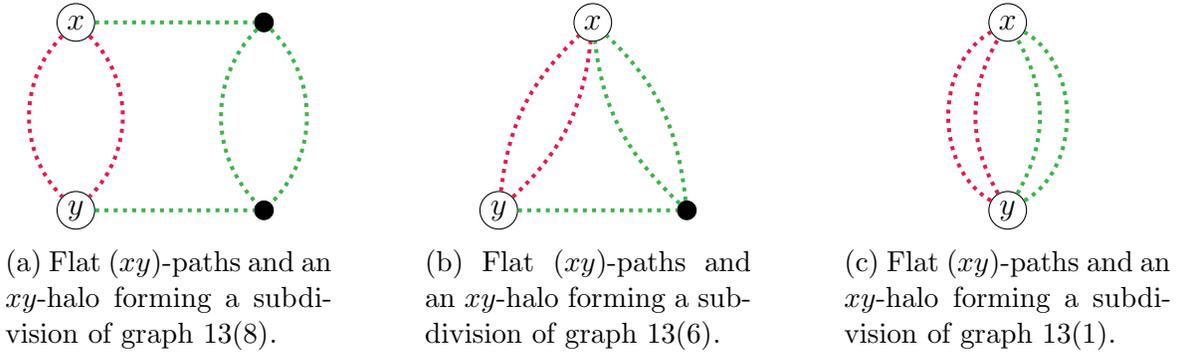
We are now ready to prove \cref{thm:minor_closed}.
\begin{proof}{[\Cref{thm:minor_closed}]}
	We already know that strictly metrizable graph are closed under topological minors, \cref{prop:top_minor}. So to prove this claim, it suffices to show that
    s.m.\ graphs are closed under edge contraction. Namely, that if $G$ is s.m.\ and $G' = G/xy$, then 
    $G'$ is also s.m. To this end, let us remove all compliant edges from $G'$ (we may need to
    recurse on this until we arrive at a graph with no compliant edges).
    By \cref{prop:compliant_can_be_deleted}, the resulting graph $G''$ is s.m.\ iff $G'$ is s.m. We will show that $G''$ is actually a topological minor of $G$ and
therefore s.m. This will allow us to deduce that $G'$ is s.m., as required.
	
	By \cref{thm:structure}, there are a handful of possible forms that $G$ can take. We will go through this list case by case. Prior to that, we make some reductions. First, we may clearly assume that $G$ is $2$-connected, otherwise we work with each $2$-connected component separately. Also, we may assume that $\deg(x),\deg(y)\geq 3$  for the contracted edge $xy$. In other words, we may assume both $x$ and $y$ are branch vertices. Indeed, if $\min(\deg(x),\deg(y))=2$ then $G'$ is just a topological minor of $G$, hence s.m. 
    We may also assume that no edge in $G$ is compliant. To prove this, let us first assume that $e\neq xy$  is a compliant edge. Since $\deg(x),\deg(y)\geq 3$, it is not difficult to see that $e$ is also compliant in $G'$, and so we may freely remove $e$. If $xy$ is compliant, then contracting it yields a non-$2$-connected graph. Indeed, the contraction of $xy$ turns a flat $xy$-path into a $2$-connected component. Therefore, we may delete these $xy$-paths in $G$ so that $xy$ is no longer compliant and then contract $xy$.\\
     \begin{figure}[h]
			\centering
			\begin{subfigure}[t]{0.28\textwidth}
				\centering
				\begin{tikzpicture}
				\def\vtxSize{0.4cm}
				\def\r{1.75cm}
				\def\edgewidth{1.5pt}
				\node[draw,circle,minimum size=\vtxSize,inner sep=1pt]  (1) at (0*360/3 +90: \r) {\small$v_1$};
				\node[draw,circle,minimum size=\vtxSize,inner sep=1pt]  (2) at (1*360/3 +90: \r) {\small$v_2$};
				\node[draw,circle,minimum size=\vtxSize,inner sep=1pt]  (3) at (2*360/3 +90: \r) {\small$v_3$};
				\node[draw,circle,minimum size=\vtxSize,inner sep=1pt]  (4) at (0,0) {\small$v_4$};
				
				\draw [line width=\edgewidth,dotted] (1) -- (2) ;
				\draw [line width=\edgewidth] (1) -- (3) ;
                    \draw [line width=\edgewidth,dotted] (1) to[out=-10,in=70] (3) ;
				\draw [line width=\edgewidth,dotted] (1) -- (4) ;
				\draw [line width=\edgewidth,dotted] (2) -- (3) ;
				\draw [line width=\edgewidth,dotted] (2) -- (4) ;
				\draw [line width=\edgewidth,dotted] (3) -- (4) ;

				\end{tikzpicture}
				\caption{A graph with a compliant edge $v_1v_3$.}
				\label{fig:contracting_compliant_1}
				
			\end{subfigure}
				\hspace{30mm}
				\begin{subfigure}[t]{0.28\textwidth}
				\centering
				\begin{tikzpicture}
					\def\vtxSize{0.4cm}
				\def\r{1.75cm}
				\def\edgewidth{1.5pt}
                    \def\rad{.75cm}
                   
                    \coordinate (u) at (\r,0);
				
				\node[draw,circle,minimum size=\vtxSize,inner sep=1pt]  (2) at (-\r,0) {\small$v_2$};
				\node[draw,circle,minimum size=\vtxSize,inner sep=1pt]  (4) at (0,0) {\small$v_4$};

				\draw [line width=\edgewidth,dotted] (u) to[out=150,in=30] (4) ;
                    \draw [line width=\edgewidth,dotted] (u) to[out=-150,in=-30] (4) ;
                    \draw [line width=\edgewidth,dotted] (u) to[out=120,in=60] (2) ;
                    \draw [line width=\edgewidth,dotted] (u) to[out=-120,in=-60] (2) ;
                    \draw [line width=\edgewidth,dotted]  ($(u)+(\rad,0)$) circle (\rad);
				
				\draw [line width=\edgewidth,dotted] (2) -- (4)  ;
				 \node[draw,circle,minimum size=\vtxSize,inner sep=1pt, fill=white]  at (\r,0) {\small$u$};
				\end{tikzpicture}
				\caption{Contracting a compliant edge yields a graph with a cut vertex.}
				\label{fig:subdivided_K4_contraction}
				
			\end{subfigure}
			\caption{Contracting a compliant edge yields a graph which is no longer $2$-connected.}
		\end{figure}
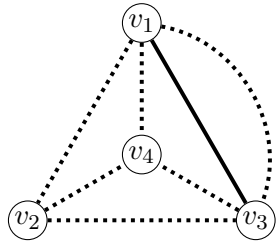
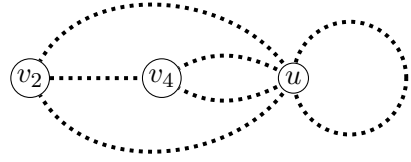
	By \cref{thm:structure} we may assume that $G$ is either $K_5$, $W_5$ or a subdivision of $K_{2,3}$, $K_4$, $W_4$ or $W_4$'. We go over each of these cases.
	\begin{itemize}
		\item $G$ is a subdivision of $K_{2,3}$. In this case, there is
        no edge $xy$ in $G$ for which $\deg(x),\deg(y)\geq 3$, and so there is nothing to show.
		\item $G$ is a subdivision of $K_4$, \cref{fig:subdivided_K4}. $G$ has $4$ branch vertices called $v_1,v_2,v_3, v_4$. Let $P_{i,j}$ denote the path between vertex $v_i$ and $v_j$. Say that $P_{1,4}=v_1v_4$ is the edge we contract. We claim that at least one of the four paths $P_{1,2}$, $P_{1,3}$, $P_{2,4}$ and $P_{3,4}$ is also edge, for otherwise we obtain a subdivision of \cref{fig:graph7} and $G$ is not s.m. So w.l.o.g. $P_{3,4}=v_3v_4$. We now contract the edge $v_1v_4$ and call this new vertex $u$, \cref{fig:subdivided_K4_contraction}. After contracting $v_1v_4$, the edge $v_3v_4$ is compliant. After we delete it, we obtain a topological minor of our original graph $G$, \cref{fig:del_comp_edge_K4}.
		 \begin{figure}[h]
			\centering
			\begin{subfigure}[t]{0.28\textwidth}
				\centering
				\begin{tikzpicture}
				\def\vtxSize{0.4cm}
				\def\r{1.75cm}
				\def\edgewidth{1.5pt}
				\node[draw,circle,minimum size=\vtxSize,inner sep=1pt]  (1) at (0*360/3 +90: \r) {\small$v_1$};
				\node[draw,circle,minimum size=\vtxSize,inner sep=1pt]  (2) at (1*360/3 +90: \r) {\small$v_2$};
				\node[draw,circle,minimum size=\vtxSize,inner sep=1pt]  (3) at (2*360/3 +90: \r) {\small$v_3$};
				\node[draw,circle,minimum size=\vtxSize,inner sep=1pt]  (4) at (0,0) {\small$v_4$};
				
				\draw [line width=\edgewidth,dotted] (1) -- (2) ;
				\draw [line width=\edgewidth,dotted] (1) -- (3) ;
				\draw [line width=\edgewidth] (1) -- (4) ;
				\draw [line width=\edgewidth,dotted] (2) -- (3) ;
				\draw [line width=\edgewidth,dotted] (2) -- (4) ;
				\draw [line width=\edgewidth,dotted] (3) -- (4) ;

				\end{tikzpicture}
				\caption{A subdivided $K_4$.}
				\label{fig:subdivided_K4}
				
			\end{subfigure}
				\hfill 
				\begin{subfigure}[t]{0.28\textwidth}
				\centering
				\begin{tikzpicture}
					\def\vtxSize{0.4cm}
				\def\r{1.75cm}
				\def\edgewidth{1.5pt}
				\node[draw,circle,minimum size=\vtxSize,inner sep=1pt]  (1) at (0*360/3 +90: \r) {\small$u$};
				\node[draw,circle,minimum size=\vtxSize,inner sep=1pt]  (2) at (1*360/3 +90: \r) {\small$v_2$};
				\node[draw,circle,minimum size=\vtxSize,inner sep=1pt]  (3) at (2*360/3 +90: \r) {\small$v_3$};

				\draw [line width=\edgewidth,dotted] (1) -- (2) ;
				\draw [line width=\edgewidth] (1) -- (3) ;
				
				\draw [line width=\edgewidth,dotted] (2) -- (3) ;
				\draw [line width=\edgewidth,dotted] (1) to[out=170, in=120] (2);
				\draw [line width=\edgewidth,dotted] (1) to[out=10, in=60] (3);
				
				\end{tikzpicture}
				\caption{Contracting an edge of a subdivided $K_4$.}
				\label{fig:subdivided_K4_contraction}
				
			\end{subfigure}
			\hfill
			\begin{subfigure}[t]{0.28\textwidth}
				\centering
				\begin{tikzpicture}
					\def\vtxSize{0.4cm}
					\def\r{1.75cm}
					\def\edgewidth{1.5pt}
					\node[draw,circle,minimum size=\vtxSize,inner sep=1pt]  (1) at (0*360/3 +90: \r) {\small$u$};
					\node[draw,circle,minimum size=\vtxSize,inner sep=1pt]  (2) at (1*360/3 +90: \r) {\small$v_2$};
					\node[draw,circle,minimum size=\vtxSize,inner sep=1pt]  (3) at (2*360/3 +90: \r) {\small$v_3$};

					\draw [line width=\edgewidth,dotted] (1) -- (2) ;

					\draw [line width=\edgewidth,dotted] (2) -- (3) ;
					\draw [line width=\edgewidth,dotted] (1) to[out=170, in=120] (2);
					\draw [line width=\edgewidth,dotted] (1) to[out=10, in=60] (3);
					
					\end{tikzpicture}
	
				\caption{Deleting a compliant edge in a contracted subdivided $K_4$.}
				\label{fig:del_comp_edge_K4}
				
			\end{subfigure}
			\caption{Contracting an edge in a subdivided s.m. $K_4$ yields an s.m. graph. }
		\end{figure}
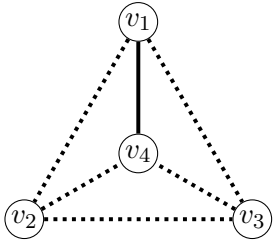
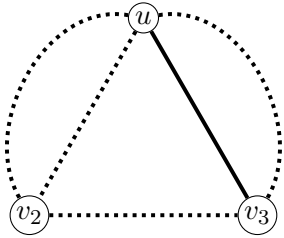
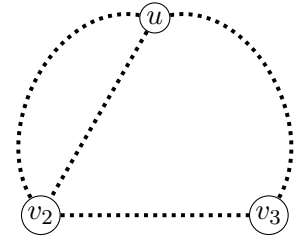
		\item $G$ is a subdivision of $W_4$. Let $v_0,v_1,v_2,v_3,v_4$ be the branch vertices of $G$, where $v_0$ is the `central' vertex and $v_1,v_2,v_3,v_4$ appear in this order along the outer cycle, \cref{fig:subdivided_W4}. As before, let $P_{i,j}$ be the path between $v_{i}$ and $v_{j}$.\\
		We claim that if $P_{i,j}$ and $P_{k,l}$ share an endpoint then at least one of them is an edge. Up to symmetries there are four cases to consider. Case (i): If neither $P_{0,1}$ nor $P_{0,4}$ are edges, then $G$ contains a subdivision of \cref{fig:graph6}, \cref{fig:Incident_Path_case_1}. Case (ii): If neither $P_{0,1}$ nor $P_{1,4}$ are edges, then $G$ contains a subdivision of \cref{fig:graph5}, \cref{fig:Incident_Path_case_2}. Case (iii): If neither $P_{1,4}$ nor $P_{3,4}$ are edges, then $G$ contains a subdivision of \cref{fig:graph4}, \cref{fig:Incident_Path_case_3}. Case (iv): If neither $P_{0,1}$ nor $P_{0,3}$ are edges, then $G$ again contains a subdivision of \cref{fig:graph4}.\\
		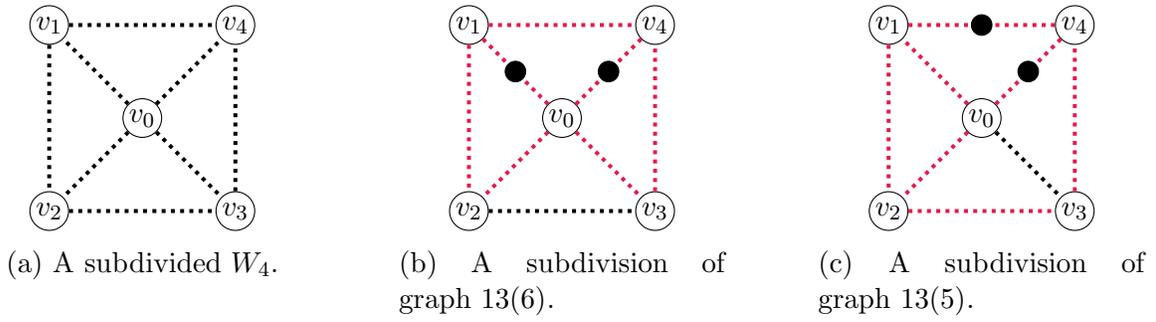
\begin{figure}[h]
			\centering
			\begin{subfigure}[t]{0.28\textwidth}
				\centering
				\begin{tikzpicture}
				\def\vtxSize{0.4cm}
				\def\r{1.75cm}
				\def\edgewidth{1.5pt}
				\node[draw,circle,minimum size=\vtxSize,inner sep=1pt]  (0) at (0,0) {\small$v_0$};
				\node[draw,circle,minimum size=\vtxSize,inner sep=1pt]  (1) at (0*360/4 +135: \r) {\small$v_1$};
				\node[draw,circle,minimum size=\vtxSize,inner sep=1pt]  (2) at (1*360/4 +135: \r) {\small$v_2$};
				\node[draw,circle,minimum size=\vtxSize,inner sep=1pt]  (3) at (2*360/4 +135: \r) {\small$v_3$};
				\node[draw,circle,minimum size=\vtxSize,inner sep=1pt]  (4) at (3*360/4 +135: \r) {\small$v_4$};

				\draw [line width=\edgewidth,dotted] (0) -- (1) ;
				\draw [line width=\edgewidth,dotted] (0) -- (2) ;
				\draw [line width=\edgewidth,dotted] (0) -- (3) ;
				\draw [line width=\edgewidth,dotted] (0) -- (4) ;
				\draw [line width=\edgewidth,dotted] (1) -- (2) ;
				\draw [line width=\edgewidth,dotted] (1) -- (4) ;
				\draw [line width=\edgewidth,dotted] (2) -- (3) ;
				\draw [line width=\edgewidth,dotted] (3) -- (4) ;

				\end{tikzpicture}
				\caption{A subdivided $W_4$.}
				\label{fig:subdivided_W4}
				
			\end{subfigure}
				\hfill 
				\begin{subfigure}[t]{0.28\textwidth}
				\centering
				\begin{tikzpicture}
					\def\vtxSize{0.4cm}
					\def\r{1.75cm}
					\def\edgewidth{1.5pt}
					\node[draw,circle,minimum size=\vtxSize,inner sep=1pt]  (0) at (0,0) {\small$v_0$};
					\node[draw,circle,minimum size=\vtxSize,inner sep=1pt]  (1) at (0*360/4 +135: \r) {\small$v_1$};
					\node[draw,circle,minimum size=\vtxSize,inner sep=1pt]  (2) at (1*360/4 +135: \r) {\small$v_2$};
					\node[draw,circle,minimum size=\vtxSize,inner sep=1pt]  (3) at (2*360/4 +135: \r) {\small$v_3$};
					\node[draw,circle,minimum size=\vtxSize,inner sep=1pt]  (4) at (3*360/4 +135: \r) {\small$v_4$};
					\node[draw,circle, fill,scale=0.7] (x1) at ($(0)!0.5!(1)$) {};
					\node[draw,circle, fill,scale=0.7] (x2) at ($(0)!0.5!(4)$) {};

					\draw [line width=\edgewidth,dotted,red1] (0) -- (x1) ;
					\draw [line width=\edgewidth,dotted,red1] (1) -- (x1) ;
					\draw [line width=\edgewidth,dotted,red1] (0) -- (2) ;
					\draw [line width=\edgewidth,dotted,red1] (0) -- (3) ;
					\draw [line width=\edgewidth,dotted,red1] (0) -- (x2) ;
					\draw [line width=\edgewidth,dotted,red1] (4) -- (x2) ;
					\draw [line width=\edgewidth,dotted,red1] (1) -- (2) ;
					\draw [line width=\edgewidth,dotted,red1] (1) -- (4) ;
					\draw [line width=\edgewidth,dotted] (2) -- (3) ;
					\draw [line width=\edgewidth,dotted,red1] (3) -- (4) ;

				\end{tikzpicture}
				\caption{A subdivision of \cref{fig:graph6}.}
				\label{fig:Incident_Path_case_1}
				
			\end{subfigure}
			\hfill
			\begin{subfigure}[t]{0.28\textwidth}
				\centering
				\begin{tikzpicture}
					\def\vtxSize{0.4cm}
					\def\r{1.75cm}
					\def\edgewidth{1.5pt}
					\node[draw,circle,minimum size=\vtxSize,inner sep=1pt]  (0) at (0,0) {\small$v_0$};
					\node[draw,circle,minimum size=\vtxSize,inner sep=1pt]  (1) at (0*360/4 +135: \r) {\small$v_1$};
					\node[draw,circle,minimum size=\vtxSize,inner sep=1pt]  (2) at (1*360/4 +135: \r) {\small$v_2$};
					\node[draw,circle,minimum size=\vtxSize,inner sep=1pt]  (3) at (2*360/4 +135: \r) {\small$v_3$};
					\node[draw,circle,minimum size=\vtxSize,inner sep=1pt]  (4) at (3*360/4 +135: \r) {\small$v_4$};
					\node[draw,circle, fill,scale=0.7] (x1) at ($(1)!0.5!(4)$) {};
					\node[draw,circle, fill,scale=0.7] (x2) at ($(0)!0.5!(4)$) {};

					\draw [line width=\edgewidth,dotted,red1] (0) -- (1) ;
					\draw [line width=\edgewidth,dotted,red1] (0) -- (2) ;
					\draw [line width=\edgewidth,dotted] (0) -- (3) ;
					\draw [line width=\edgewidth,dotted,red1] (0) -- (x2) ;
					\draw [line width=\edgewidth,dotted,red1] (4) -- (x2) ;
					\draw [line width=\edgewidth,dotted,red1] (1) -- (2) ;
					\draw [line width=\edgewidth,dotted,red1] (1) -- (x1) ;
					\draw [line width=\edgewidth,dotted,red1] (4) -- (x1) ;
					\draw [line width=\edgewidth,dotted,red1] (2) -- (3) ;
					\draw [line width=\edgewidth,dotted,red1] (3) -- (4) ;

				\end{tikzpicture}
	
				\caption{A subdivision of \cref{fig:graph5}.}
				\label{fig:Incident_Path_case_2}
				
			\end{subfigure}
			\caption{A subdivided $W_4$ containing adjacent subdivided edges is not s.m.}
		\end{figure}
		We now consider what happens when we contract an edge in $G$. Up to symmetries there are two cases to consider. First suppose that we contract the edge $P_{1,4}=v_1v_4$ to a vertex $u$. As mentioned above, at least one of $P_{0,1}$ and $P_{0,4}$ is an edge. After contracting $v_1v_4$, $P_{0,1}$ and $P_{0,4}$ become parallel $(v_0u)$-paths. At least one these is a compliant edge in $G/v_1v_4$. Deleting this edge we obtain a subdivided $K_4$ which is a topological minor of $G$, \cref{fig:subdivided_W4_contraction_1}. \\
		Next suppose we contract the edge $P_{0,4}=v_0v_4$ to a vertex $u$. At least one of $P_{0,3}$, $P_{3,4}$ and $P_{0,1}$, $P_{1,4}$ are edges. Therefore, either $P_{0,3}$ or $P_{3,4}$ (similarly, $P_{0,1}$ or $P_{1,4}$) becomes a compliant edge after contracting $v_0v_4$. Removing these compliant edges yields a topological minor of $G$, \cref{fig:subdivided_W4_contraction_2}.\\
		\begin{figure}[h]
			\centering
			\begin{subfigure}[t]{0.28\textwidth}
				\centering
				\begin{tikzpicture}
					\def\vtxSize{0.4cm}
					\def\r{1.75cm}
					\def\edgewidth{1.5pt}
					\node[draw,circle,minimum size=\vtxSize,inner sep=1pt]  (0) at (0,0) {\small$v_0$};
					\node[draw,circle,minimum size=\vtxSize,inner sep=1pt]  (1) at (0*360/4 +135: \r) {\small$v_1$};
					\node[draw,circle,minimum size=\vtxSize,inner sep=1pt]  (2) at (1*360/4 +135: \r) {\small$v_2$};
					\node[draw,circle,minimum size=\vtxSize,inner sep=1pt]  (3) at (2*360/4 +135: \r) {\small$v_3$};
					\node[draw,circle,minimum size=\vtxSize,inner sep=1pt]  (4) at (3*360/4 +135: \r) {\small$v_4$};
					\node[draw,circle, fill,scale=0.7] (x1) at ($(1)!0.5!(4)$) {};
					\node[draw,circle, fill,scale=0.7] (x2) at ($(3)!0.5!(4)$) {};

					\draw [line width=\edgewidth,dotted,red1] (0) -- (1) ;
					\draw [line width=\edgewidth,dotted] (0) -- (2) ;
					\draw [line width=\edgewidth,dotted,red1] (0) -- (3) ;
					\draw [line width=\edgewidth,dotted,red1] (0) -- (4) ;

					\draw [line width=\edgewidth,dotted,red1] (1) -- (2) ;
					\draw [line width=\edgewidth,dotted,red1] (1) -- (x1) ;
					\draw [line width=\edgewidth,dotted,red1] (4) -- (x1) ;
					\draw [line width=\edgewidth,dotted,red1] (2) -- (3) ;
					\draw [line width=\edgewidth,dotted,red1] (3) -- (x2) ;
					\draw [line width=\edgewidth,dotted,red1] (4) -- (x2) ;

				\end{tikzpicture}
	
				\caption{A subdivision of \cref{fig:graph4}.}
				\label{fig:Incident_Path_case_3}
				
			\end{subfigure}
				\hfill 
				\begin{subfigure}[t]{0.28\textwidth}
				\centering
				\begin{tikzpicture}
					\def\vtxSize{0.4cm}
				\def\r{1.75cm}
				\def\edgewidth{1.5pt}
				\node[draw,circle,minimum size=\vtxSize,inner sep=1pt]  (0) at (0,0) {\small$v_0$};
				\node[draw,circle,minimum size=\vtxSize,inner sep=1pt]  (u) at (0*360/3 +90: \r) {\small$u$};
				\node[draw,circle,minimum size=\vtxSize,inner sep=1pt]  (2) at (1*360/3 +90: \r) {\small$v_2$};
				\node[draw,circle,minimum size=\vtxSize,inner sep=1pt]  (3) at (2*360/3 +90: \r) {\small$v_3$};

				\draw [line width=\edgewidth,dotted] (0) -- (2) ;
				\draw [line width=\edgewidth,dotted] (0) -- (3) ;
				
				\draw [line width=\edgewidth,dotted] (2) -- (3) ;
				\draw [line width=\edgewidth,dotted] (u) to[out=170, in=120] (2);
				\draw [line width=\edgewidth,dotted] (u) to[out=10, in=60] (3);

				\draw [line width=\edgewidth,dotted] (u) to[out=225, in=135] (0);
				\draw [line width=\edgewidth] (u) to[out=-45, in=45] (0);
				
				\end{tikzpicture}
				\caption{Contracting an edge of a subdivided $W_4$.}
				\label{fig:subdivided_W4_contraction_1}
				
			\end{subfigure}
			\hfill
			\begin{subfigure}[t]{0.28\textwidth}
				\centering
				\begin{tikzpicture}
					\def\vtxSize{0.4cm}
					\def\r{1.75cm}
					\def\edgewidth{1.5pt}
				
				\node[draw,circle,minimum size=\vtxSize,inner sep=1pt]  (1) at (0*360/4 +135: \r) {\small$v_1$};
				\node[draw,circle,minimum size=\vtxSize,inner sep=1pt]  (2) at (1*360/4 +135: \r) {\small$v_2$};
				\node[draw,circle,minimum size=\vtxSize,inner sep=1pt]  (3) at (2*360/4 +135: \r) {\small$v_3$};
				\node[draw,circle,minimum size=\vtxSize,inner sep=1pt]  (u) at (3*360/4 +135: \r) {\small$u$};

				\draw [line width=\edgewidth,dotted] (1) -- (2) ;
				\draw [line width=\edgewidth,dotted] (2) -- (3) ;
				\draw [line width=\edgewidth,dotted] (u) to[out=-70, in=70] (3);
				\draw [line width=\edgewidth] (u) to[out=-110, in=110] (3);
				\draw [line width=\edgewidth,dotted] (u) to[out=160, in=20] (1);
				\draw [line width=\edgewidth] (u) to[out=-160, in=-20] (1);
				\draw [line width=\edgewidth,dotted] (u) -- (2) ;

				\end{tikzpicture}
	
				\caption{Contracting a spoke in a subdivided $W_4$.}
				\label{fig:subdivided_W4_contraction_2}
				
			\end{subfigure}
			\caption{Contracting an edge in a subdivided s.m. $W_4$ yields an s.m. graph.}
		\end{figure}
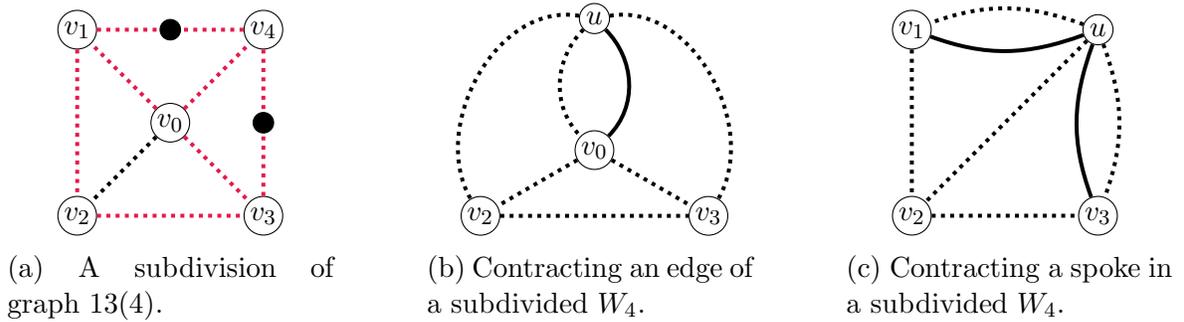
		\item $G$ is a subdivision of $W_4'$. As before, we label $G$'s branch vertices by $v_0,v_1,v_2,v_3,v_4$, \cref{fig:subdivided_W4'}. Similarly, we let $P_{i,j}$ be the flat path between $v_{i}$ and $v_{j}$. The only difference is that now we also have a flat path $P_{1,3}$ between $v_1$ and $v_3$. But if $P_{1,3}$ has at least $2$ edges, then $G$ contains a subdivision of $K_{2,4}$, \cref{fig:graph1}. Consequently, we may assume that $P_{1,3}$ is an edge. Moreover, as argued in the previous case, if $P_{i,j}$ and $P_{k,l}$ share an endpoint then at least one of them is an edge. So, let us see what happens when we contract an edge $e$ in this graph. Up to symmetries there are three case to consider: (i) $e =v_1v_4$ (ii) $e=v_0v_1$ (iii) $e=v_1v_3$. The proofs for Cases (i) and (ii) are nearly identical to the previous case of $W_4$, since ignoring the edge $v_1v_3$, the graph $G$ is a subdivision of $W_4$. For case (iii), we contract $v_1v_3$ to a vertex $u$ and observe that for $i=0,2,4$, $P_{i,1}$, $P_{i,3}$ become parallel $(v_iu)$-paths, at least one of which is a compliant edge, \cref{fig:subdivided_W4'_contraction_1}. Deleting these compliant edges, we again get a graph which is a topological minor of $G$.
		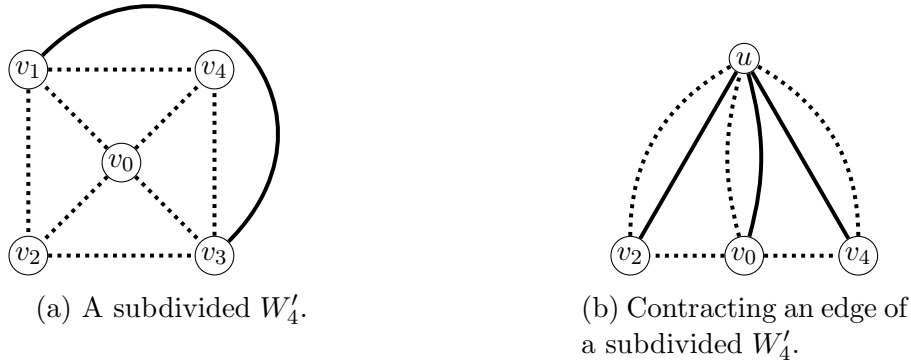
\begin{figure}[h]
			\centering
			\begin{subfigure}[t]{0.28\textwidth}
				\centering
				\begin{tikzpicture}
				\def\vtxSize{0.4cm}
				\def\r{1.75cm}
				\def\edgewidth{1.5pt}
				\node[draw,circle,minimum size=\vtxSize,inner sep=1pt]  (0) at (0,0) {\small$v_0$};
				\node[draw,circle,minimum size=\vtxSize,inner sep=1pt]  (1) at (0*360/4 +135: \r) {\small$v_1$};
				\node[draw,circle,minimum size=\vtxSize,inner sep=1pt]  (2) at (1*360/4 +135: \r) {\small$v_2$};
				\node[draw,circle,minimum size=\vtxSize,inner sep=1pt]  (3) at (2*360/4 +135: \r) {\small$v_3$};
				\node[draw,circle,minimum size=\vtxSize,inner sep=1pt]  (4) at (3*360/4 +135: \r) {\small$v_4$};

				\draw [line width=\edgewidth,dotted] (0) -- (1) ;
				\draw [line width=\edgewidth,dotted] (0) -- (2) ;
				\draw [line width=\edgewidth,dotted] (0) -- (3) ;
				\draw [line width=\edgewidth,dotted] (0) -- (4) ;
				\draw [line width=\edgewidth,dotted] (1) -- (2) ;
				\draw [line width=\edgewidth,dotted] (1) -- (4) ;
				\draw [line width=\edgewidth,dotted] (2) -- (3) ;
				\draw [line width=\edgewidth,dotted] (3) -- (4) ;
				\draw [line width=\edgewidth] (1) to[out=45, in=45, distance=1.5* \r ] (3);

				\end{tikzpicture}
				\caption{A subdivided $W_4'$.}
				\label{fig:subdivided_W4'}
				
			\end{subfigure}
				\hspace{30mm}
				\begin{subfigure}[t]{0.28\textwidth}
				\centering
				\begin{tikzpicture}
					\def\vtxSize{0.4cm}
					\def\r{1.75cm}
					\def\edgewidth{1.5pt}
					
					\node[draw,circle,minimum size=\vtxSize,inner sep=1pt]  (u) at (0*360/3 +90: \r) {\small$u$};
					\node[draw,circle,minimum size=\vtxSize,inner sep=1pt]  (2) at (1*360/3 +90: \r) {\small$v_2$};
					\node[draw,circle,minimum size=\vtxSize,inner sep=1pt]  (4) at (2*360/3 +90: \r) {\small$v_4$};
					\node[draw,circle,minimum size=\vtxSize,inner sep=1pt]  (0) at ($(2)!0.5!(4)$) {\small$v_0$};

					\draw [line width=\edgewidth,dotted] (0) -- (2) ;
					\draw [line width=\edgewidth,dotted] (0) -- (4) ;
					\draw [line width=\edgewidth,dotted] (2) to[out=90, in=-150] (u);
					\draw [line width=\edgewidth] (2) to(u);
					\draw [line width=\edgewidth,dotted] (0) to[out=105, in=-105] (u);
					\draw [line width=\edgewidth] (0) to[out=75, in=-75] (u);
					\draw [line width=\edgewidth,dotted] (4) to[out=90, in=-30] (u);
					\draw [line width=\edgewidth] (4) to(u);

				\end{tikzpicture}
				\caption{Contracting an edge of a subdivided $W_4'$.}
				\label{fig:subdivided_W4'_contraction_1}
				
			\end{subfigure}
			\caption{Contracting an edge in a subdivided s.m. $W_4'$ yields an s.m. graph.}
		\end{figure}
		\item In the remaining case $G$ is either an (honest) $K_5$ or $W_5$. If $G=K_5$, then contracting an edge yields $K_4$, clearly a topological minor of $K_5$. When $G=W_5$ there  are two possible edge contractions. One yields $W_4$, a topological minor of $W_5$. The other generates an outerplanar graph, which becomes a $5$-cycle when compliant edges get deleted.
	\end{itemize}
\end{proof}

\section{Persistent vs.\ Zero-Weight Edges}\label{sec:persistent_edges}

Suppose that $\cal P$ is a consistent path system in $G=(V,E)$ which can be realized as the unique shortest path with respect to some edge weights. 
What can be said about such edge weights? Specifically,
given an edge $e\in E$, can $\cal P$ be induced by some edge weights 
$w:E\to \mathbb{R}_{\ge0}$ with $w(e)=0$? 
We give a necessary condition for this and prove, 
that this condition is also sufficient for strictly metrizable graphs. 

Other questions about functions $w:E\to \mathbb{R}_{>0}$ that induces $\cal P$ 
suggest themselves. E.g., \cite{BBW} considers the least possible aspect ratio of such $w$.

\subsection{Persistence}
Let us recall the notion of a {\em persistent edge} from \cite{CL}. Let $\mathcal{P}$ be a consistent path system of a graph $G=(V,E)$. An edge $uv\in E$ is said to be {\em persistent} (w.r.t.\ $\mathcal{P}$) if for every vertex $x\in V$, either $P_{x,v} = P_{x,u}uv$ or $P_{x,u} = P_{x,v}vu$. In other words, either $x$ goes to $v$ via $u$ or goes to $u$ via $v$. Intuitively, an edge $uv$ persistent if it is `infinitesimally short' w.r.t.\ $\mathcal{P}$. 

Let $\mathcal{P}$ be a consistent path system of a graph $G=(V,E)$. We note the following:
\begin{itemize}
\item 
For an edge $e\in E$,
the path system obtained from $\mathcal{P}$ upon contracting $e$ is consistent iff $e$ is persistent. \\
Indeed, if $e=uv$ is not persistent then there exists an vertex $x\in V$ such that $P_{x,u}$ and $P_{x,v}$ are disjoint except at $x$. Contracting $uv$ to a vertex $z$ yields two different $(xz)$-paths, namely $P_{x,u}/e$ and $P_{x,v}/e$. \\
On the other hand, say we contract the persistent edge $e=uv$ to a vertex $z$. The
persistence of $e$ means that
for all $x$ either $v\in P_{x,u}$ or $u\in P_{x,v}$. Consequently, the $xz$-path
in the $\mathcal{P}/e$ is well defined. Namely, it is $P_{x,u}/e=P_{x,v}/e$. 
The consistency of $\mathcal{P}/e$ follows from the consistency of $\mathcal{P}$.
\item 
As observed in \cite{CL}, given a vertex $x$,
the collection of all $xy$-paths in $\mathcal{P}$ form a tree $T_x$ rooted at $x$. 
The edge $e\in E$ is persistent w.r.t.\ $\mathcal{P}$ iff it belongs to $E(T_x)$ for every vertex $x$.
\end{itemize}

We wish to return to the intuition that
persistent edges are 'infinitesimally short', and make it concrete.
Up until now, metrizability was defined in terms of positive edge weights. So we first need to extend the notion of 
metrizability to non-negative weight 
functions.
 Let us say that a non-negative weight function $w:E\to \mathbb{R}_{\ge 0}$ {\em simply induces} a path system $\mathcal{P}$ on $G=(V,E)$ if for each $u,v\in V$,  $P_{u,v} \in \mathcal{P}$ is the {\em unique} shortest {\em simple} $(uv)$-path. 

If $w$ simply induces a path system $\mathcal{P}$ and $w$ is strictly positive,
then $w$ strictly induces $\mathcal{P}$ in the usual sense.
But if a non-negative $w$ simply induces a path system $\mathcal{P}$ then, by slightly increasing all edge weights, we clearly obtain a positive weight function strictly inducing $\mathcal{P}$. The following proposition sheds some light on the relation between zero-weight edges and persistent edges:

\begin{proposition}\label{lem:pers_vs_0}
\begin{enumerate}
\item 
Let $\mathcal{P}$ be a path system on $G=(V,E)$ simply induced by a weight function
$w:E\to \mathbb{R}_{\geq 0}$. If $w(e) = 0$ for some edge $e\in E$ then $e$ is $\mathcal{P}$-persistent.
\item
However, there exist strictly metrizable paths systems with persistent edges whose weight cannot be set to zero.
\item 
Let $\mathcal{P}$ be a consistent path system on a strictly metrizable graph.
Then there exists $w:E\to \mathbb{R}_{\geq 0}$ simply inducing $\mathcal{P}$ such that 
$w(e)=0$ for every $\mathcal{P}$-persistent edge $e\in E$.
\end{enumerate}
\end{proposition}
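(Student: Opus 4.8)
The three parts are of quite different character, and I would treat them separately, with Part~(3) resting on the minor-closedness established in Theorem~\ref{thm:minor_closed}. For Part~(1), the plan is a direct shortest-path argument. Assume $w(e)=0$ for $e=uv$, fix $x\in V$, write $d$ for $w$-distance along simple paths and $P:=P_{x,u}$ for the unique shortest simple $(xu)$-path. If $v\notin V(P)$, then $P$ followed by the zero-weight edge $uv$ is a simple $(xv)$-path of the same $w$-length as $P$; a symmetric comparison (prepending/appending $uv$) gives $d(x,u)=d(x,v)$, and uniqueness of shortest simple paths forces $P_{x,v}=P_{x,u}uv$. If instead $v\in V(P)$, the crucial observation is that the $v$-to-$u$ suffix of $P$ must be exactly the edge $uv$: otherwise, replacing that suffix by $uv$ (legal, since $u$ is not internal to $P$) would yield a strictly shorter simple $(xu)$-path, contradicting minimality. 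Hence $P=P_{x,v}vu$, with the $x$-to-$v$ prefix equal to $P_{x,v}$. Either way $e$ is persistent.

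For Part~(3), the key idea is to contract \emph{all} persistent edges at once and invoke minor-closedness. Let $F\subseteq E$ be the set of $\mathcal P$-persistent edges. Since every persistent edge lies in each shortest-path tree $T_x$ and $T_x$ is acyclic, $F$ is a forest, so each component of $(V,F)$ is a tree. Contracting $F$ yields a consistent path system $\mathcal P/F$ on $G/F$ (iterating the single-edge contraction fact noted before the proposition; well-definedness follows because in any tree $T_x$ all paths leaving a subtree exit through a single gateway vertex, so the image of $P_{x,y}$ is independent of the representatives chosen within an $F$-component). Now $G/F$ is a minor of the strictly metrizable graph $G$, hence strictly metrizable by Theorem~\ref{thm:minor_closed}; therefore $\mathcal P/F$ is strictly metric, realized by some positive $w':E(G/F)\to\mathbb R_{>0}$. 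I would extend $w'$ to $w:E\to\mathbb R_{\ge0}$ by setting $w(e)=0$ for $e\in F$, and verify that $w$ simply induces $\mathcal P$: the $w$-length of a simple path equals the $w'$-length of its image in $G/F$, so any simple $(xy)$-path projects to a $(\bar x\bar y)$-walk of $w'$-length at least that of the unique $w'$-shortest path, which is the image of $P_{x,y}$; equality forces the projection to coincide with that image, and since the non-$F$ edges pin the entry/exit vertices of each $F$-tree while the tree supplies a unique internal connector, the original path equals $P_{x,y}$. Pairs inside a common $F$-component are handled separately: there $P_{x,y}$ is the unique zero-weight, hence unique shortest simple, path in the tree.

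Part~(2) is an existence statement, which I would settle by an explicit example. Using Part~(1) and the extension argument of Part~(3) with $F=\{e\}$, the clean reformulation is: a persistent edge $e$ of a strictly metric system $\mathcal P$ admits a zero-weight realization \emph{iff} $\mathcal P/e$ is strictly metric. Thus it suffices to produce a strictly metric $\mathcal P$ with a persistent edge $e$ for which $\mathcal P/e$ is \emph{not} strictly metric; the host graph is then necessarily non-strictly-metrizable, consistently with Part~(3). I would build such a pair by starting from a non-strictly-metrizable graph together with a consistent system that is metric but not strictly metric—one whose only metric realizations contain a tie—and then \emph{splitting} an appropriate vertex into two joined by the new edge $e$, lifting the system so that $e$ is persistent and a positive weight on $e$ is exactly what breaks the residual tie. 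Feasibility of the lifted shortest-path inequalities with $w(e)>0$ certifies that $\mathcal P$ is strictly metric, while the forced tie at $w(e)=0$ certifies that $\mathcal P/e$ is not.

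I expect the main obstacle to be the construction in Part~(2). One must choose the base obstruction so that a \emph{single} edge can break it, rather than one like the three-matching identity underlying $K_{2,4}$, where inserting the edge perturbs all three matchings at once and cannot separate them (indeed, writing down the inequalities of the natural splits of $K_{2,4}$ yields \emph{unconditional} infeasibility, so $\mathcal P$ itself fails to be strictly metric). Concretely, after recording the shortest-path inequalities of the lift, the system must be infeasible at $w(e)=0$ yet feasible for small $w(e)>0$; securing this borderline behavior—which is precisely why the base system should be \emph{metric} (tie-realizable) rather than merely non-strict—is the delicate point. A secondary technical point, in Part~(3), is that contracting a forest may create parallel edges or loops; these project to compliant-type edges and are removed via Proposition~\ref{prop:compliant_can_be_deleted} (or simply assigned large weight), so that $G/F$ may be taken as the simple minor on which Theorem~\ref{thm:minor_closed} applies.
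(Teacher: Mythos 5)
Parts~(1) and~(3) of your proposal are essentially sound, though organized differently from the paper. For Part~(1) the paper argues contrapositively: if $e=uv$ is not persistent, there is an $x$ with $P_{x,u}$ and $P_{x,v}$ meeting only at $x$, and the two strict inequalities from uniqueness (applied to $P_{x,u}uv$ and $P_{x,v}vu$) sum to $0<0$. Your direct case analysis works too, but note that in your first case ($v\notin V(P_{x,u})$) the ``symmetric comparison'' silently assumes $u\notin V(P_{x,v})$, and in your second case ``strictly shorter'' should read ``shorter or of equal weight, contradicting uniqueness'' when the replaced suffix has weight zero; both sub-cases are patchable. For Part~(3) you contract the whole persistent forest $F$ at once, while the paper inducts on $|E|$, contracting one persistent edge at a time; both rest on \cref{thm:minor_closed}. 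The paper's route quietly avoids your multigraph worry: after the neighborly reduction, the endpoints of a persistent edge have no common neighbor (a common neighbor $z$ would have $P_{z,x}=zx$ and $P_{z,y}=zy$, contradicting persistence), so each single contraction preserves simplicity, and persistence is preserved under contraction, which is what licenses the iteration. Your one-shot version is viable with the large-weight patch you describe, but invoking \cref{prop:compliant_can_be_deleted} for the loops and parallel edges is off the mark --- that proposition concerns metrizability of graphs, not realizability of a fixed path system; only the large-weight assignment is the correct fix.

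The genuine gap is Part~(2). This is an existence statement, and you produce no example; you yourself flag the construction as the main unresolved obstacle, so the proof is incomplete as it stands. Worse, your guiding heuristic is false and points away from the solution: the paper's example (\cref{fig:persistent_non_zero}) is \emph{exactly} a vertex split of $K_{2,4}$. It is a $7$-vertex graph in which contracting the persistent edge $(3,4)$ yields the standard non-s.m.\ system on $K_{2,4}$ (\cref{fig:graph1}); the degree-$4$ hub is split into vertices $3$ and $4$ carrying neighbors $\{2,6\}$ and $\{5,7\}$ respectively. This placement puts $w_{3,4}$ on the heavy side of exactly two of the four certificate inequalities, so their sum becomes $0<2w_{3,4}$: infeasible at $w_{3,4}=0$, precisely as Part~(2) demands, yet feasible for $w_{3,4}>0$, as witnessed by the explicit positive weights in the figure. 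So the split \emph{does} separate the $K_{2,4}$ obstruction, contrary to your parenthetical claim that such splits yield unconditional infeasibility. Your reformulation (zero weight is achievable iff $\mathcal{P}/e$ is strictly metric) agrees with the one the paper used in its computer search, and your instinct that the contracted system should be metric-with-a-tie rather than badly infeasible is correct; but without a concrete certified example, Part~(2) remains unproven, and the one route you explicitly ruled out is the route that works.
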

\begin{proof}
{\bf Part 1:}
If $e=uv$ is not persistent, then there exists $x\in V$ such that $P_{x,u}$ and $P_{x,v}$ have only $x$ in common. Let $Q_{x,u} = P_{x,v}vu$ be the $(xu)$-path obtained by concatenating $P_{x,v}$ and $vu$. Similarly, we set $Q_{x,v} = P_{x,u}uv$. Since $w$ induces $\mathcal{P}$,
	\[w(P_{x,v}) < w(Q_{x,v}) = w(P_{x,u}) +w(e) = w(P_{x,u}) .\]
	Similarly,
	\[w(P_{x,u}) < w(Q_{x,u}) = w(P_{x,v}) +w(e) = w(P_{x,v}) .\]
	Adding the two inequalities together we get $0<0$, a contradiction. \\

{\bf Part 2:} Consider the weighted graph in \cref{fig:persistent_non_zero}. These weights give rise to a path system $\mathcal{P}$, consisting of the unique shortest paths, where the edge $(3,4)$ is persistent.
\begin{figure}[h]
	\centering
	\begin{tikzpicture}[scale=0.8, every node/.style={scale=0.8}]
		\def\r{2.5cm}
		\def\l{3pt}
		\def\i{3pt}

		\node[draw,circle,inner sep=\i] (1) at (0*360/5 +90: \r) {$1$};
			\node[draw,circle,inner sep=\i] (2) at (1*360/5 +90: \r) {$2$};
			\node[draw,circle,inner sep=\i] (3) at (2*360/5 +90: \r) {$3$};
			\node[draw,circle,inner sep=\i] (4) at (3*360/5 +90: \r) {$4$};
			\node[draw,circle,inner sep=\i] (5) at (4*360/5 +90:\r) {$5$};
			
			\node[draw,circle,inner sep=\i] (6) at ($(1)!0.5!(3)$) {$6$};
			\node[draw,circle,inner sep=\i] (7) at  ($(1)!0.5!(4)$) {$7$};

			\draw [line width=\l,-] (1) -- (2) node[midway,label={[label distance = -10pt]135:\Large$5$}]{};
			\draw [line width=\l,-] (1) -- (5) node[midway,label={[label distance = -10pt]45:\Large$6$}]{};
			\draw [line width=\l,-] (1) -- (6) node[midway,label={[label distance = -5pt]180:\Large$4$}]{};
			\draw [line width=\l,-] (1) -- (7) node[midway,label={[label distance = -5pt]0:\Large$8$}]{};
			\draw [line width=\l,-] (2) -- (3) node[midway,label={[label distance = -3pt]-180:\Large$2$}]{};
			\draw [line width=\l,-] (3) -- (4) node[midway,label={[label distance = -5pt]-90:\Large$4$}]{};
			\draw [line width=\l,-] (3) -- (6) node[midway,label={[label distance = -11pt]135:\Large$5$}]{};
			\draw [line width=\l,-] (4) -- (5) node[midway,label={[label distance = -3pt]0:\Large$7$}]{};
			\draw [line width=\l,-] (4) -- (7) node[midway,label={[label distance = -11pt]45:\Large$5$}]{};

	\end{tikzpicture}
	\caption{The edge weights in the above graph strictly induce a path system where the edge $(3,4)$ is persistent.}
	\label{fig:persistent_non_zero}
\end{figure}
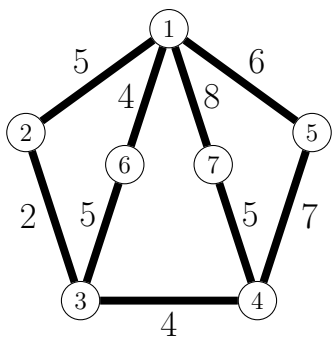
Notice the paths $(2,1,5)$, $(2,3,6)$, $(5,4,7)$, and $(6,1,7)$ are in this path system. Therefore, any weight function $w:E\to \mathbb{R}_{\geq 0}$ simply inducing this system must satisfy 
\begin{equation*}
	\begin{split}
		w_{1,2}+w_{1,5} & < w_{2,3}+w_{3,4}+w_{4,5}\\
		w_{2,3}+w_{3,6} & < w_{1,2}+w_{1,6}\\
		w_{4,5}+w_{4,7} & < w_{1,5}+w_{1,7}\\
		w_{1,6}+w_{1,7} & < w_{3,6}+w_{3,4}+w_{4,7}.
	\end{split}
\end{equation*}
Adding these inequalities together yields $0<2w_{3,4}$, implying that under any weights simply 
inducing $\mathcal{P}$ the persistent edge $(3,4)$ be strictly positive. This example was found using a computer search and making use of the following fact: there exists a weight function $w:E\to \mathbb{R}_{\geq 0}$ simply inducing $\mathcal{P}$ with $w(e)=0$ if and only if the path system $\mathcal{P}/e$ is strictly metric.
Therefore, to find such an example, we consider graphs which remain non-s.m.\ after contracting an edge. For instance, if we take the path system $\mathcal{P}$ above and contract the persistent edge $(3,4)$ we obtain a consistent path system $\mathcal{P}/e$ on $G/e$. The path system $\mathcal{P}/e$ is precisely a non-s.m. path system defined on $K_{2,4}$, \cref{fig:graph1}. \\

{\bf Part 3:}
By induction on the number of edges. 
Wlog we may assume that every edge in $G$ is a path in $\mathcal{P}$. 
(A path system with this property is said to be {\em neighborly}, see \cite{CL}.)
If no edge in $\mathcal{P}$ is persistent, there is nothing to show,
so, let $e\in E$ be $\mathcal{P}$-persistent. 
As mentioned, contracting $e$ yields a consistent path system $\mathcal{P}/e$ on $G/e$.
But $G/e$ is strictly metrizable by
\cref{thm:minor_closed}. By induction, there is a weight function 
$w:E\setminus e \to \mathbb{R}_{\geq 0}$ simply inducing 
$\mathcal{P}/e$ s.t.\ $w(e')=0$ for every ($\mathcal{P}/e$)-persistent edge
$e'\in E\setminus e$. As mentioned above, an edge is persistent iff it appears
in all the trees $T_x$. Therefore, and edge in $E\setminus e$
is ($\mathcal{P}/e$)-persistent if only if it is $\mathcal{P}$-persistent. 
We extend $w$ to a weight function $\tilde{w}:E\to \mathbb{R}_{\geq 0}$, 
by setting $\tilde{w}(e)=0$ and $\tilde{w}(f) = w(f)$ for all $f\in E\setminus e$,
and argue that the weight function $\tilde{w}$ on $G$ simply induces $\mathcal{P}$.\\

Let $Q_{u,v}$ be a simple $(uv)$-path different from $P_{u,v}\in \mathcal{P}$,
for some $u,v\in V$. If $Q_{u,v}/e$ is a simple path then
\[\tilde{w}(P_{u,v})= w(P_{u,v}/e) <  w(Q_{u,v}/e) = \tilde{w}(Q_{u,v}).\]

On the other hand, if $Q_{u,v}/e$ is not a simple path, then $Q_{u,v}$ contains the 
vertices $x,y$, where $e=xy$, where $x$ and $y$ do not appear consecutively
in $Q_{u,v}$. We argue that $x$ and $y$ cannot even appear
two spots apart in $Q_{u,v}$, and 
are at distance at least $3$ along this path. If not,
there is a vertex $z\in Q_{u,v}$ adjacent to both $x$ and $y$. By assumption, both $zx$ 
and $zy$ are paths in $\mathcal{P}$, contrary to the assumption that 
$xy$ is persistent.

So $x$ and $y$ are of distance at least $3$ in $Q_{u,v}$, and it follows that $Q_{u,v}/e$ is the union of a simple path $\pi$ and a simple cycle $C$. Observe that every simple cycle in $G/e$ has positive $w$-weight. Indeed, let $f$ be an edge in $C$. Then
$C\setminus f$ is a simple path with the same endpoints as $f$, 
implying $w(f)<w(C\setminus f)$. In particular,\\ 
$w(C) = w(f) + w(C\setminus f)>0$. Since $Q_{u,v}/e = \pi+C$,
\[\tilde{w}(P_{u,v})= w(P_{u,v}/e) \leq   w(P)  < w(\pi)+w(C) = w(Q_{u,v}/e) = \tilde{w}(Q_{u,v}).\]

\end{proof}
\section{Discussion and Open Questions}\label{sec:open}
The famous graph minor theorem \cite{RS} says that every nontrivial
minor-closed family is characterized by a finite set of forbidden minors. 
So the most natural question to ask is which forbidden minors characterize strict metrizability. We raise the possibility that answer is to be found in \cref{fig:minor_zoo}. Namely,

\begin{conjecture}\label{conj:gang_of_six}
A graph is strictly metrizable if and only if it contains none of the six graphs in \cref{fig:minor_zoo} as a minor.
\end{conjecture}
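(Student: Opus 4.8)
The plan is to prove the two implications of the asserted equivalence separately, leaning on the structural classification already in hand. The forward (``only if'') direction is the easy half: it suffices to verify that each of the six graphs in \cref{fig:minor_zoo} is itself non-strictly-metrizable, by exhibiting for each an explicit consistent path system that cannot be realized by positive edge weights, exactly as was done for the members of \cref{fig:zoo} in the proofs of \cref{prop:disjoint_cycles} and \cref{thm:structure}. Granting this, \cref{thm:minor_closed} immediately yields that no strictly metrizable graph can contain any of them as a minor: if an s.m.\ graph $G$ had such an $F$ as a minor, then $F$ would be s.m.\ by minor-closedness, contradicting that $F$ is non-s.m.

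The substance lies in the backward (``if'') direction, which I would prove in contrapositive form: every non-s.m.\ graph $G$ contains one of the six graphs as a minor. First I would reduce to the structured setting. As in the proof of \cref{thm:minor_closed}, strict metrizability is determined blockwise, so some block $B$ of $G$ is non-s.m.; since $B$ is a subgraph of $G$ it is a minor of $G$, and it is enough to locate an obstruction inside $B$. Next I would delete compliant edges: by \cref{prop:compliant_can_be_deleted} this preserves non-strict-metrizability and keeps the graph $2$-connected, and as edge deletion is a minor operation the resulting $B'$ remains a minor of $G$. Thus we may assume $B'$ is $2$-connected, has no compliant edges, and is non-s.m. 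Because every graph on the list of \cref{thm:structure}---namely $K_5$, $W_5$, and subdivisions of $K_{2,3}$, $K_4$, $W_4$, $W_4'$---is strictly metrizable by \cite{CL}, the graph $B'$ cannot appear on that list; the analysis in the proof of \cref{thm:structure} then locates inside $B'$ a subdivision of one of the graphs of \cref{fig:zoo}, hence that graph as a minor of $B'$, and by transitivity of the minor relation, a minor of $G$.

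It remains to bridge the topological obstructions of \cref{fig:zoo} with the minor obstructions of \cref{fig:minor_zoo}. The key lemma I would prove is that every graph in \cref{fig:zoo} has some member of \cref{fig:minor_zoo} as a minor; combined with the chain above, this completes the backward direction and hence the equivalence. To upgrade the equivalence into the statement that these six are \emph{precisely} the forbidden minors, I would finally check minor-minimality: for each of the six, every single-vertex-deletion, edge-deletion, and edge-contraction yields a strictly metrizable graph (again reading off \cref{thm:structure}). Each such verification is a finite, explicit computation.

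I expect the principal obstacle to be exactly the passage from topological minors to minors, that is, establishing \emph{completeness}. The classification of \cref{thm:structure}, and the obstructions of \cref{fig:zoo}, are phrased in terms of subdivisions, whereas \cref{fig:minor_zoo} must capture \emph{all} minor-minimal obstructions. The danger is that edge contraction---an operation invisible to the subdivision-based analysis---could in principle create a non-s.m.\ configuration avoiding all six minors, or that one of the graphs of \cref{fig:zoo} fails to reduce cleanly to a single member of \cref{fig:minor_zoo}. Ruling this out rigorously demands a careful, case-by-case matching of the two figures together with a proof that none of the reductions above can escape the six-minor envelope; this case analysis is where I anticipate the real work, and the likely reason the statement appears here only as a conjecture.
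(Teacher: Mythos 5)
First, note that the paper itself offers no proof of this statement: it is posed as \cref{conj:gang_of_six} and remains open, so any complete argument here would be new mathematics rather than a reconstruction. Your forward direction is fine, and your final paragraph correctly senses where the difficulty lives, but the middle of your argument contains a genuine gap that is fatal: you assert that ``every graph on the list of \cref{thm:structure}---namely $K_5$, $W_5$, and subdivisions of $K_{2,3}$, $K_4$, $W_4$, $W_4'$---is strictly metrizable.'' That is false. \Cref{thm:structure} gives a \emph{necessary} condition, not a sufficient one. For instance, \cref{fig:graph6} is itself a subdivision of $W_4$ (subdivide two incident spokes) and is non-s.m.\ --- the paper's own proof of \cref{thm:minor_closed} exploits exactly this, showing that a subdivided $W_4$ in which two paths sharing an endpoint are both subdivided contains a subdivision of \cref{fig:graph4}, \cref{fig:graph5} or \cref{fig:graph6}. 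Likewise every strict subdivision of $K_5$ contains a subdivision of $K_{2,4}$ and is non-s.m. Consequently your dichotomy collapses: a $2$-connected, compliant-edge-free, non-s.m.\ graph $B'$ can perfectly well \emph{be} a subdivision of $W_4$ or $W_4'$, in which case the analysis in the proof of \cref{thm:structure} terminates without handing you a subdivision of any graph in \cref{fig:zoo}, and your chain of reductions produces no forbidden minor.

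What the backward direction actually requires is a characterization of \emph{which} subdivisions of $K_{2,3}$, $K_4$, $W_4$, $W_4'$ are strictly metrizable --- equivalently, a proof that the list of topologically minimal obstructions is complete. The paper is explicit that this is unknown: \cref{fig:zoo} is captioned ``currently known'' obstructions, and the discussion immediately after \cref{conj:gang_of_six} states that a resolution of the conjecture \emph{would} determine which subdivisions of the structure-theorem graphs are s.m., not the other way around. Your proposal quietly assumes this completeness (both in the step above and in the side lemma matching \cref{fig:zoo} to \cref{fig:minor_zoo}, which only helps if \cref{fig:zoo} is exhaustive), so it assumes precisely the open content of the conjecture. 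The block decomposition, the compliant-edge reduction via \cref{prop:compliant_can_be_deleted}, and the appeal to \cref{thm:minor_closed} in the forward direction are all sound, but they are the routine perimeter of the problem, not its core.
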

Clearly, strictly metrizable graphs constitute a subclass of metrizable graphs.
But how are these two classes related to each other?
We showed in \cite{CCL} that, up to compliant edges, any $2$-connected
metrizable graph with at least $11$ vertices is a subdivision of $K_{2,3}$, $K_4$, $W_4$ and $W_4'$  or else $K_{2,n}$. A resolution to \cref{conj:gang_of_six} would
determine which subdivisions of these graphs are strictly metrizable.
Such a resolution would shed light on the structure of metrizable
graphs, and may possibly yield a practical algorithm to decide metrizability.
To elaborate on this last point: As shown in \cite{CL}, there exists a
polynomial-time algorithm to decide whether a given graph is metrizable.
However, we still do not have an honest-to-goodness
efficient algorithm for this decision problem.\\

In \cref{sec:persistent_edges}, we defined what it means for a non-negative weight function to simply induce a path system. But this notion makes sense even if negatively-weighted edges are allowed. 
Let us say a real valued weight function \mbox{$w:E\to \mathbb{R}$} simply induces a consistent path $\mathcal{P}$ on $G=(V,E)$ if every $P_{u,v}\in \mathcal{P}$ is the unique simple $(uv)$-geodesic w.r.t.\ to $w$. In fact, similar to \cref{lem:pers_vs_0}, it can be shown that if $w:E\to \mathbb{R}$ simply induces $\mathcal{P}$, then every non-positive edge is necessarily persistent. 
On the other hand, unlike path systems induced by non-negative weights, path systems induced by real weights need not be consistent. As an example, consider the $5$-cycle in which four edges weigh $2$ and the fifth one weighs $-3$. The shortest simple path between two adjacent vertices in the edge between them, while the shortest simple path connecting two non-adjacent vertices contains the edge of weight $-3$. It is not difficult to see that consistency fails here. While real edge weights don't necessarily induce consistent path systems, we ask whether they can help us extend the repertoire of consistent path systems.
Concretely, every simply induced consistent path system that
we have examined so far could also be induced by some non-negative weight
function. We wonder if there exist examples to the contrary:
\begin{open}\label{open:neg}
Let $\cal P$ be a consistent path system that
is simply induced by real (possibly non-positive) edge weights. Is it true that $\cal P$ can necessarily also be induced by positive edge weights?
\end{open}
\newpage
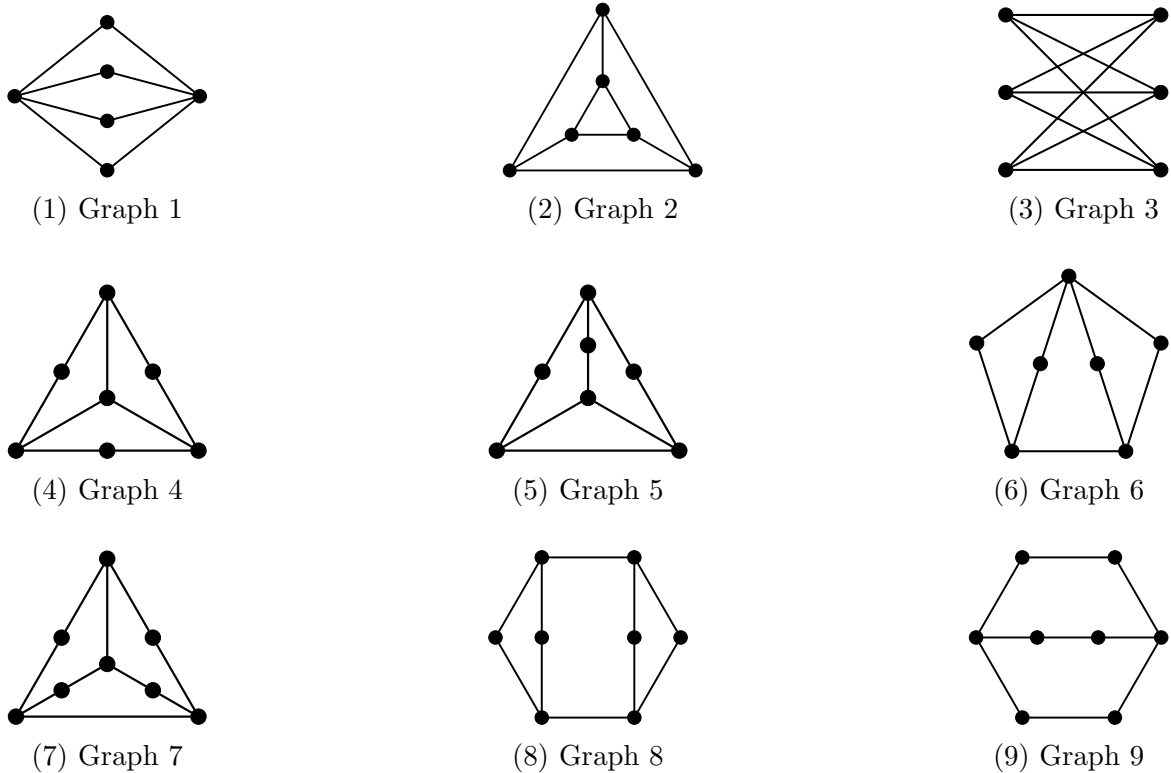
\begin{figure}[!htb]
        
        \renewcommand\thesubfigure{(\arabic{subfigure})}
	\centering
	\begin{subfigure}{0.175\textwidth}
		\centering
		\resizebox{\textwidth}{!}{
			\begin{tikzpicture}
			\def\r{5}
			\def\s{4}
			\node[draw,circle,fill] (1) at (-\r,0) {$1$};
			\node[draw,circle,fill] (2) at (\r,0) {$2$};
			\node[draw,circle,fill]  (3) at (0,\s) {$3$};
			\node[draw,circle,fill]  (4) at (0,\s/3) {$4$};
			\node[draw,circle,fill]  (5) at (0,-\s/3) {$5$};
			\node[draw,circle,fill]  (6) at (0,-\s) {$6$};

			\draw [line width=3pt,-] (1) -- (3);
			\draw [line width=3pt,-] (1) -- (4);
			\draw [line width=3pt,-] (1) -- (5);
			\draw [line width=3pt,-] (1) -- (6);
			\draw [line width=3pt,-] (2) -- (3);
			\draw [line width=3pt,-] (2) -- (4);
			\draw [line width=3pt,-] (2) -- (5);
			\draw [line width=3pt,-] (2) -- (6);

			\end{tikzpicture}
		}
		\caption{Graph 1}
		\label[graph]{fig:graph1}
	\end{subfigure}
	\hfill
	\begin{subfigure}{0.175\textwidth}
		\centering
		\resizebox{\textwidth}{!}{
			\begin{tikzpicture}

				\def\r{6}
				\def\s{2}
				\node[draw,circle,fill] (1) at (0*360/3 +90: \r) {$1$};
				\node[draw,circle,fill] (2) at (1*360/3 +90: \r) {$2$};
				\node[draw,circle,fill] (3) at (2*360/3 +90: \r) {$3$};
				\node[draw,circle,fill] (4) at (0*360/3 +90: \s) {$4$};
				\node[draw,circle,fill] (5) at (1*360/3 +90: \s) {$5$};
				\node[draw,circle,fill] (6) at (2*360/3 +90: \s) {$6$};

				\draw [line width=3pt,-] (1) -- (2);
				\draw [line width=3pt,-] (1) -- (3);
				\draw [line width=3pt,-] (2) -- (3);
				
				\draw [line width=3pt,-] (4) -- (5);
				\draw [line width=3pt,-] (5) -- (6);
				\draw [line width=3pt,-] (4) -- (6);
	
				\draw [line width=3pt,-] (1) -- (4);
				\draw [line width=3pt,-] (2) -- (5);
				\draw [line width=3pt,-] (3) -- (6);

			\end{tikzpicture}
		}
		\caption{Graph 2}
		\label[graph]{fig:graph2}
	\end{subfigure}
	\hfill
	\begin{subfigure}{0.15\textwidth}
		\centering
		\resizebox{\textwidth}{!}{
			\begin{tikzpicture}
				\def\r{4}
				\def\s{4}
				\node[draw,circle,fill] (1) at (-\r,\s) {$1$};
				\node[draw,circle,fill] (2) at (-\r,0) {$2$};
				\node[draw,circle,fill] (3) at (-\r,-\s) {$3$};
				\node[draw,circle,fill] (4) at (\r,\s) {$4$};
				\node[draw,circle,fill] (5) at (\r,0) {$5$};
				\node[draw,circle,fill] (6) at (\r,-\s) {$6$};

				\draw [line width=3pt,-] (1) -- (4);
				\draw [line width=3pt,-] (1) -- (5);
				\draw [line width=3pt,-] (1) -- (6);
				\draw [line width=3pt,-] (2) -- (4);
				\draw [line width=3pt,-] (2) -- (5);
				\draw [line width=3pt,-] (2) -- (6);
				\draw [line width=3pt,-] (3) -- (4);
				\draw [line width=3pt,-] (3) -- (5);
				\draw [line width=3pt,-] (3) -- (6);

			\end{tikzpicture}
		}
		\caption{Graph 3}
		\label[graph]{fig:graph3}
	\end{subfigure}

	\vspace{5mm}

	\begin{subfigure}{0.175\textwidth}
		\centering
		\resizebox{\textwidth}{!}{
			\begin{tikzpicture}
			
			\def\r{5}
			\node[draw,circle,fill] (1) at (0*360/3 +90: \r) {$1$};
			\node[draw,circle,fill] (3) at (1*360/3 +90: \r) {$3$};
			\node[draw,circle,fill] (5) at (2*360/3 +90: \r) {$5$};
			\node[draw,circle,fill] (2) at ($(1)!.5!(3)$) {$2$};
			\node[draw,circle,fill] (4) at ($(3)!.5!(5)$) {$4$};
			\node[draw,circle,fill] (6) at ($(5)!.5!(1)$) {$6$};
			\node[draw,circle,fill] (7) at (0,0) {$7$};

			\draw [line width=3pt,-] (1) -- (2);
			\draw [line width=3pt,-] (1) -- (6);
			\draw [line width=3pt,-] (2) -- (3);
			\draw [line width=3pt,-] (3) -- (4);
			\draw [line width=3pt,-] (4) -- (5);
			\draw [line width=3pt,-] (5) -- (6);
			\draw [line width=3pt,-] (1) -- (7);
			\draw [line width=3pt,-] (3) -- (7);
			\draw [line width=3pt,-] (5) -- (7);

			\end{tikzpicture}
		}
		\caption{Graph 4}
		\label[graph]{fig:graph4}
	\end{subfigure}
	\hfill
	\begin{subfigure}{0.175\textwidth}
		\centering
		\resizebox{\textwidth}{!}{
			\begin{tikzpicture}
			
				\def\r{5}
				\node[draw,circle,fill] (1) at (0*360/3 +90: \r) {$1$};
				\node[draw,circle,fill] (3) at (1*360/3 +90: \r) {$3$};
				\node[draw,circle,fill] (4) at (2*360/3 +90: \r) {$4$};
				\node[draw,circle,fill] (2) at ($(1)!.5!(3)$) {$2$};
				\node[draw,circle,fill] (5) at ($(1)!.5!(4)$) {$5$};
				\node[draw,circle,fill] (7) at (0,0) {$7$};
				\node[draw,circle,fill] (6) at ($(1)!.5!(7)$) {$6$};

				\draw [line width=3pt,-] (1) -- (2);
				\draw [line width=3pt,-] (1) -- (5);
				\draw [line width=3pt,-] (1) -- (6);
				\draw [line width=3pt,-] (2) -- (3);
				\draw [line width=3pt,-] (3) -- (4);
				\draw [line width=3pt,-] (3) -- (7);
				\draw [line width=3pt,-] (4) -- (5);
				\draw [line width=3pt,-] (4) -- (7);
				\draw [line width=3pt,-] (6) -- (7);

			\end{tikzpicture}
		}
		\caption{Graph 5}
		\label[graph]{fig:graph5}
	\end{subfigure}
	\hfill
	\begin{subfigure}{0.175\textwidth}
		\centering
		\resizebox{\textwidth}{!}{
			\begin{tikzpicture}

			\node[draw,circle,fill] (1) at (0*360/5 +90: 5cm) {$1$};
			\node[draw,circle,fill] (2) at (1*360/5 +90: 5cm) {$2$};
			\node[draw,circle,fill] (3) at (2*360/5 +90: 5cm) {$3$};
			\node[draw,circle,fill] (4) at (3*360/5 +90: 5cm) {$4$};
			\node[draw,circle,fill] (5) at (4*360/5 +90: 5cm) {$5$};
			
			\node[draw,circle,fill] (6) at ($(1)!0.5!(3)$) {$6$};
			\node[draw,circle,fill] (7) at  ($(1)!0.5!(4)$) {$7$};

			\draw [line width=3pt,-] (1) -- (2);
			\draw [line width=3pt,-] (2) -- (3);
			\draw [line width=3pt,-] (3) -- (4);
			\draw [line width=3pt,-] (4) -- (5);
			\draw [line width=3pt,-] (5) -- (1);
			\draw [line width=3pt,-] (1) -- (6);
			\draw [line width=3pt,-] (3) -- (6);
			\draw [line width=3pt,-] (1) -- (7);
			\draw [line width=3pt,-] (4) -- (7);
			
			\end{tikzpicture}
		}
		\caption{Graph 6}
		\label[graph]{fig:graph6}
	\end{subfigure}

	\vspace{5mm}

	\begin{subfigure}{0.175\textwidth}
		\centering
		\resizebox{\textwidth}{!}{
			\begin{tikzpicture}
			
				\def\r{5}
				\node[draw,circle,fill] (1) at (0*360/3 +90: \r) {$1$};
				\node[draw,circle,fill] (3) at (1*360/3 +90: \r) {$3$};
				\node[draw,circle,fill] (4) at (2*360/3 +90: \r) {$4$};
				\node[draw,circle,fill] (2) at ($(1)!.5!(3)$) {$2$};
				\node[draw,circle,fill] (5) at ($(1)!.5!(4)$) {$5$};
				\node[draw,circle,fill] (8) at (0,0) {$8$};
				\node[draw,circle,fill] (6) at ($(3)!.5!(8)$) {$6$};
				\node[draw,circle,fill] (7) at ($(4)!.5!(8)$) {$7$};

				\draw [line width=3pt,-] (1) -- (2);
				\draw [line width=3pt,-] (1) -- (5);
				\draw [line width=3pt,-] (1) -- (8);
				\draw [line width=3pt,-] (2) -- (3);
				\draw [line width=3pt,-] (3) -- (4);
				\draw [line width=3pt,-] (3) -- (6);
				\draw [line width=3pt,-] (4) -- (5);
				\draw [line width=3pt,-] (4) -- (7);
				\draw [line width=3pt,-] (6) -- (8);
				\draw [line width=3pt,-] (7) -- (8);

			\end{tikzpicture}
		}
		\caption{Graph 7}
		\label[graph]{fig:graph7}
	\end{subfigure}
	\hfill
	\begin{subfigure}{0.175\textwidth}
		\centering
		\resizebox{\textwidth}{!}{
			\begin{tikzpicture}
				\node[draw,circle,fill] (1) at (0*360/6 +120: 5cm) {$1$};
				\node[draw,circle,fill] (2) at (1*360/6 +120: 5cm) {$2$};
				\node[draw,circle,fill] (3) at (2*360/6 +120: 5cm) {$3$};
	
				\node[draw,circle,fill] (5) at (3*360/6 +120: 5cm) {$5$};
				\node[draw,circle,fill] (6) at (4*360/6 +120: 5cm) {$6$};
				\node[draw,circle,fill] (7) at (5*360/6 +120: 5cm) {$7$};
				
				\node[draw,circle,fill] (4) at ($(1)!.5!(3)$) {$4$};
				\node[draw,circle,fill] (8) at ($(5)!.5!(7)$) {$8$};
				
				\draw [line width=3pt,-] (1) -- (2);
				\draw [line width=3pt,-] (1) -- (7);
				\draw [line width=3pt,-] (1) -- (4);
				\draw [line width=3pt,-] (2) -- (3);
				\draw [line width=3pt,-] (3) -- (5);
				\draw [line width=3pt,-] (3) -- (4);
				\draw [line width=3pt,-] (5) -- (6);
				\draw [line width=3pt,-] (5) -- (8);
				\draw [line width=3pt,-] (6) -- (7);
				\draw [line width=3pt,-] (7) -- (8);

			\end{tikzpicture}
		}
		\caption{Graph 8}
		\label[graph]{fig:graph8}
	\end{subfigure}
	\hfill
	\begin{subfigure}{0.175\textwidth}
		\centering
		\resizebox{\textwidth}{!}{
			\begin{tikzpicture}
				\node[draw,circle,fill] (1) at (0*360/6 +120: 5cm) {$1$};
				\node[draw,circle,fill] (2) at (1*360/6 +120: 5cm) {$2$};
				\node[draw,circle,fill] (3) at (2*360/6 +120: 5cm) {$3$};
				\node[draw,circle,fill] (4) at (3*360/6 +120: 5cm) {$4$};
				\node[draw,circle,fill] (5) at (4*360/6 +120: 5cm) {$5$};
				\node[draw,circle,fill] (6) at (5*360/6 +120: 5cm) {$6$};
				\node[draw,circle,fill] (7) at ($(2)!.33!(5)$) {$7$};
				\node[draw,circle,fill] (8) at ($(2)!.66!(5)$) {$8$};
				
				\draw [line width=3pt,-] (1) -- (2);
				\draw [line width=3pt,-] (1) -- (6);
				\draw [line width=3pt,-] (2) -- (3);
				\draw [line width=3pt,-] (2) -- (7);
				\draw [line width=3pt,-] (3) -- (4);
				\draw [line width=3pt,-] (4) -- (5);
				\draw [line width=3pt,-] (5) -- (6);
				\draw [line width=3pt,-] (5) -- (8);
				\draw [line width=3pt,-] (7) -- (8);

			\end{tikzpicture}
		}
		\caption{Graph 9}
		\label[graph]{fig:graph9}
	\end{subfigure}
	\caption{Currently known topologically minimal non-s.m.\ graphs}
	\label{fig:zoo}
\end{figure}
\vspace{10mm}
\begin{figure}[!htb]
	\centering
	\begin{subfigure}{0.175\textwidth}
		\centering
		\resizebox{\textwidth}{!}{
			\begin{tikzpicture}
			\def\r{5}
			\def\s{4}
			\node[draw,circle,fill] (1) at (-\r,0) {$1$};
			\node[draw,circle,fill] (2) at (\r,0) {$2$};
			\node[draw,circle,fill]  (3) at (0,\s) {$3$};
			\node[draw,circle,fill]  (4) at (0,\s/3) {$4$};
			\node[draw,circle,fill]  (5) at (0,-\s/3) {$5$};
			\node[draw,circle,fill]  (6) at (0,-\s) {$6$};

			\draw [line width=3pt,-] (1) -- (3);
			\draw [line width=3pt,-] (1) -- (4);
			\draw [line width=3pt,-] (1) -- (5);
			\draw [line width=3pt,-] (1) -- (6);
			\draw [line width=3pt,-] (2) -- (3);
			\draw [line width=3pt,-] (2) -- (4);
			\draw [line width=3pt,-] (2) -- (5);
			\draw [line width=3pt,-] (2) -- (6);

			\end{tikzpicture}
		}
		\caption{Graph 1}
		\label[graph]{fig:minor_graph1}
	\end{subfigure}
	\hfill
	\begin{subfigure}{0.175\textwidth}
		\centering
		\resizebox{\textwidth}{!}{
			\begin{tikzpicture}

				\def\r{6}
				\def\s{2}
				\node[draw,circle,fill] (1) at (0*360/3 +90: \r) {$1$};
				\node[draw,circle,fill] (2) at (1*360/3 +90: \r) {$2$};
				\node[draw,circle,fill] (3) at (2*360/3 +90: \r) {$3$};
				\node[draw,circle,fill] (4) at (0*360/3 +90: \s) {$4$};
				\node[draw,circle,fill] (5) at (1*360/3 +90: \s) {$5$};
				\node[draw,circle,fill] (6) at (2*360/3 +90: \s) {$6$};

				\draw [line width=3pt,-] (1) -- (2);
				\draw [line width=3pt,-] (1) -- (3);
				\draw [line width=3pt,-] (2) -- (3);
				
				\draw [line width=3pt,-] (4) -- (5);
				\draw [line width=3pt,-] (5) -- (6);
				\draw [line width=3pt,-] (4) -- (6);
	
				\draw [line width=3pt,-] (1) -- (4);
				\draw [line width=3pt,-] (2) -- (5);
				\draw [line width=3pt,-] (3) -- (6);

			\end{tikzpicture}
		}
		\caption{Graph 2}
		\label[graph]{fig:minor_graph2}
	\end{subfigure}
	\hfill
	\begin{subfigure}{0.15\textwidth}
		\centering
		\resizebox{\textwidth}{!}{
			\begin{tikzpicture}
				\def\r{4}
				\def\s{4}
				\node[draw,circle,fill] (1) at (-\r,\s) {$1$};
				\node[draw,circle,fill] (2) at (-\r,0) {$2$};
				\node[draw,circle,fill] (3) at (-\r,-\s) {$3$};
				\node[draw,circle,fill] (4) at (\r,\s) {$4$};
				\node[draw,circle,fill] (5) at (\r,0) {$5$};
				\node[draw,circle,fill] (6) at (\r,-\s) {$6$};

				\draw [line width=3pt,-] (1) -- (4);
				\draw [line width=3pt,-] (1) -- (5);
				\draw [line width=3pt,-] (1) -- (6);
				\draw [line width=3pt,-] (2) -- (4);
				\draw [line width=3pt,-] (2) -- (5);
				\draw [line width=3pt,-] (2) -- (6);
				\draw [line width=3pt,-] (3) -- (4);
				\draw [line width=3pt,-] (3) -- (5);
				\draw [line width=3pt,-] (3) -- (6);

			\end{tikzpicture}
		}
		\caption{Graph 3}
		\label[graph]{fig:minor_graph3}
	\end{subfigure}

	\vspace{5mm}

	\begin{subfigure}{0.175\textwidth}
		\centering
		\resizebox{\textwidth}{!}{
			\begin{tikzpicture}
			
			\def\r{5}
			\node[draw,circle,fill] (1) at (0*360/3 +90: \r) {$1$};
			\node[draw,circle,fill] (3) at (1*360/3 +90: \r) {$3$};
			\node[draw,circle,fill] (5) at (2*360/3 +90: \r) {$5$};
			\node[draw,circle,fill] (2) at ($(1)!.5!(3)$) {$2$};
			\node[draw,circle,fill] (4) at ($(3)!.5!(5)$) {$4$};
			\node[draw,circle,fill] (6) at ($(5)!.5!(1)$) {$6$};
			\node[draw,circle,fill] (7) at (0,0) {$7$};

			\draw [line width=3pt,-] (1) -- (2);
			\draw [line width=3pt,-] (1) -- (6);
			\draw [line width=3pt,-] (2) -- (3);
			\draw [line width=3pt,-] (3) -- (4);
			\draw [line width=3pt,-] (4) -- (5);
			\draw [line width=3pt,-] (5) -- (6);
			\draw [line width=3pt,-] (1) -- (7);
			\draw [line width=3pt,-] (3) -- (7);
			\draw [line width=3pt,-] (5) -- (7);

			\end{tikzpicture}
		}
		\caption{Graph 4}
		\label[graph]{fig:minor_graph4}
	\end{subfigure}
	\hfill
	\begin{subfigure}{0.175\textwidth}
		\centering
		\resizebox{\textwidth}{!}{
			\begin{tikzpicture}
			
				\def\r{5}
				\node[draw,circle,fill] (1) at (0*360/3 +90: \r) {$1$};
				\node[draw,circle,fill] (3) at (1*360/3 +90: \r) {$3$};
				\node[draw,circle,fill] (4) at (2*360/3 +90: \r) {$4$};
				\node[draw,circle,fill] (2) at ($(1)!.5!(3)$) {$2$};
				\node[draw,circle,fill] (5) at ($(1)!.5!(4)$) {$5$};
				\node[draw,circle,fill] (7) at (0,0) {$7$};
				\node[draw,circle,fill] (6) at ($(1)!.5!(7)$) {$6$};

				\draw [line width=3pt,-] (1) -- (2);
				\draw [line width=3pt,-] (1) -- (5);
				\draw [line width=3pt,-] (1) -- (6);
				\draw [line width=3pt,-] (2) -- (3);
				\draw [line width=3pt,-] (3) -- (4);
				\draw [line width=3pt,-] (3) -- (7);
				\draw [line width=3pt,-] (4) -- (5);
				\draw [line width=3pt,-] (4) -- (7);
				\draw [line width=3pt,-] (6) -- (7);

			\end{tikzpicture}
		}
		\caption{Graph 5}
		\label[graph]{fig:minor_graph5}
	\end{subfigure}
	\hfill
	\begin{subfigure}{0.175\textwidth}
		\centering
		\resizebox{\textwidth}{!}{
			\begin{tikzpicture}
			
				\node[draw,circle,fill] (1) at (0*360/6 +120: 5cm) {$1$};
				\node[draw,circle,fill] (2) at (1*360/6 +120: 5cm) {$2$};
				\node[draw,circle,fill] (3) at (2*360/6 +120: 5cm) {$3$};
				\node[draw,circle,fill] (4) at (3*360/6 +120: 5cm) {$4$};
				\node[draw,circle,fill] (5) at (4*360/6 +120: 5cm) {$5$};
				\node[draw,circle,fill] (6) at (5*360/6 +120: 5cm) {$6$};
				\node[draw,circle,fill] (7) at ($(2)!.33!(5)$) {$7$};
				\node[draw,circle,fill] (8) at ($(2)!.66!(5)$) {$8$};
				
				\draw [line width=3pt,-] (1) -- (2);
				\draw [line width=3pt,-] (1) -- (6);
				\draw [line width=3pt,-] (2) -- (3);
				\draw [line width=3pt,-] (2) -- (7);
				\draw [line width=3pt,-] (3) -- (4);
				\draw [line width=3pt,-] (4) -- (5);
				\draw [line width=3pt,-] (5) -- (6);
				\draw [line width=3pt,-] (5) -- (8);
				\draw [line width=3pt,-] (7) -- (8);

			\end{tikzpicture}
		}
		\caption{Graph 6}
		\label[graph]{fig:minor_graph6}
	\end{subfigure}
	
	\caption{All currently known minor minimal non-s.m. graphs}
	\label{fig:minor_zoo}
\end{figure}
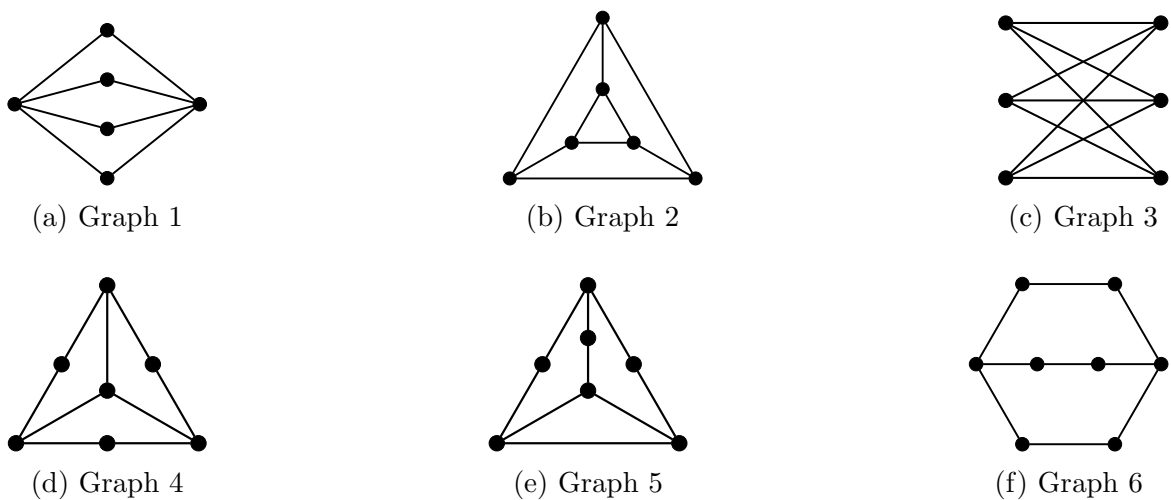
\appendix \label{append:certificates}
\section{Certificates of non-strict metrizability}\label{append:certificates}
	For each graph $G$ in \Cref{fig:zoo} we give a path system in $G$ along with a system of inequalities a weight function strictly inducing this path system must satisfy. In each case, adding these inequalities together implies $0<0$, a contradiction.\\
	\vspace{5mm}
	
	\noindent
	\begin{minipage}[c]{0.4\textwidth}
		\centering
		\begin{tikzpicture}[scale=0.35, every node/.style={scale=0.35}]
		
		\def \r{5}
		\def \s{4}
		
		\node[draw,circle,minimum size=.5cm,inner sep=1pt] (1) at (-\r,0) [scale = 2]{$1$};
		\node[draw,circle,minimum size=.5cm,inner sep=1pt] (2) at (\r,0) [scale = 2]{$2$};
		\node[draw,circle,minimum size=.5cm,inner sep=1pt] (3) at (0,\s)[scale = 2] {$3$};
		\node[draw,circle,minimum size=.5cm,inner sep=1pt] (4) at (0,\s/3)[scale = 2] {$4$};
		\node[draw,circle,minimum size=.5cm,inner sep=1pt] (5) at (0,-\s/3)[scale = 2] {$5$};
		\node[draw,circle,minimum size=.5cm,inner sep=1pt] (6) at (0,-\s)[scale = 2] {$6$};

		\draw [line width=2pt,-] (1) -- (3);
		\draw [line width=2pt,-] (1) -- (4);
		\draw [line width=2pt,-] (1) -- (5);
		\draw [line width=2pt,-] (1) -- (6);
		\draw [line width=2pt,-] (2) -- (3);
		\draw [line width=2pt,-] (2) -- (4);
		\draw [line width=2pt,-] (2) -- (5);
		\draw [line width=2pt,-] (2) -- (6);

		\end{tikzpicture}
	\end{minipage}
	\begin{minipage}[c]{0.4\textwidth}
		\centering
		\small
		\begin{equation*}
		\begin{gathered}
		(1,3,2),\ (3,1,4),\ (3,2,5),\ (3,1,6),
	 \\ (4,1,5),\  (4,2,6),  \  (5,1,6)
		\end{gathered}
		\end{equation*}
	\end{minipage}
	\\
	\vspace{4mm}
	\begin{minipage}{\textwidth}
		\small
		\begin{equation*}
		\begin{split}
		w_{2,3} + w_{2,5} & < w_{1,3} + w_{1,5}\\
		w_{1,4} + w_{1,5} & <w_{2,4} + w_{2,5}\\
		w_{2,4} + w_{2,6} & < w_{1,4} + w_{1,6}\\
		w_{1,3} + w_{1,6}  & < w_{2,3} + w_{2,6}
		\end{split}
		\end{equation*}
		
	\end{minipage}
	
	\begin{center}
		\line(1,0){400}
	\end{center}
		\begin{minipage}[c]{0.4\textwidth}
			\centering
			\begin{tikzpicture}[scale=0.6, every node/.style={scale=0.6}]
			\def\r{3}
			\def\s{1}
			\node[draw,circle,minimum size=.5cm,inner sep=1pt] (1) at (0*360/3 +90: \r) {$1$};
			\node[draw,circle,minimum size=.5cm,inner sep=1pt] (2) at (1*360/3 +90: \r) {$2$};
			\node[draw,circle,minimum size=.5cm,inner sep=1pt] (3) at (2*360/3 +90: \r) {$3$};
			\node[draw,circle,minimum size=.5cm,inner sep=1pt] (4) at (0*360/3 +90: \s) {$4$};
			\node[draw,circle,minimum size=.5cm,inner sep=1pt] (5) at (1*360/3 +90: \s) {$5$};
			\node[draw,circle,minimum size=.5cm,inner sep=1pt] (6) at (2*360/3 +90: \s) {$6$};

			\draw [line width=2pt,-] (1) -- (2);
			\draw [line width=2pt,-] (1) -- (3);
			\draw [line width=2pt,-] (2) -- (3);
			
			\draw [line width=2pt,-] (4) -- (5);
			\draw [line width=2pt,-] (5) -- (6);
			\draw [line width=2pt,-] (4) -- (6);
	
			\draw [line width=2pt,-] (1) -- (4);
			\draw [line width=2pt,-] (2) -- (5);
			\draw [line width=2pt,-] (3) -- (6);
			
			\end{tikzpicture}
		\end{minipage}
		\begin{minipage}[c]{0.4\textwidth}
			\centering
			\small
			\begin{equation*}
			\begin{gathered}
			 (1,2,5), \ (1,4,6), \ (2,5,4),  \\
			 (2,3,6), \ (3,1,4), \ (3,6,5)
			\end{gathered}
			\end{equation*}
		\end{minipage}
		\\
		\vspace{5mm}
		\begin{minipage}{\textwidth}
			\small
			\begin{equation*}
			\begin{split}
			w_{1,2} + w_{2,5}  & < w_{1,4} +  w_{4,5}\\
			w_{1,4} + w_{4,6}  & < w_{1,3} +  w_{3,6}\\
			w_{2,3} + w_{3,6}  & < w_{2,5} +  w_{5,6}\\
			w_{2,5} + w_{4,5}  & < w_{1,2} +  w_{1,4}\\
			w_{1,3} + w_{1,4}  & < w_{3,6} +  w_{4,6}\\
			w_{3,6} + w_{5,6}  & < w_{2,3} +  w_{2,5}
			\end{split}
			\end{equation*}
		\end{minipage}
		\begin{center}
			\line(1,0){400}
		\end{center}
	\begin{minipage}[c]{0.4\textwidth}
		\centering
		\begin{tikzpicture}[scale=0.35, every node/.style={scale=0.35}]
		\def\r{3.5}
		\def\s{3}
		
		\node[draw,circle,minimum size=.5cm,inner sep=1pt] (1) at (-\s,\r) [scale=2]{$1$};
		\node[draw,circle,minimum size=.5cm,inner sep=1pt] (2) at (-\s,0) [scale=2]{$2$};
		\node[draw,circle,minimum size=.5cm,inner sep=1pt] (3) at (-\s,-\r) [scale=2] {$3$};

		\node[draw,circle,minimum size=.5cm,inner sep=1pt] (4) at (\s,\r) [scale=2]{$4$};
		\node[draw,circle,minimum size=.5cm,inner sep=1pt] (5) at (\s,0) [scale=2]{$5$};
		\node[draw,circle,minimum size=.5cm,inner sep=1pt] (6) at (\s,-\r) [scale=2] {$6$};

		\draw [line width=2pt,-] (1) -- (4);
		\draw [line width=2pt,-] (1) -- (5);
		\draw [line width=2pt,-] (1) -- (6);
		\draw [line width=2pt,-] (2) -- (4);
		\draw [line width=2pt,-] (2) -- (5);
		\draw [line width=2pt,-] (2) -- (6);
		\draw [line width=2pt,-] (3) -- (4);
		\draw [line width=2pt,-] (3) -- (5);
		\draw [line width=2pt,-] (3) -- (6);

		\end{tikzpicture}
	\end{minipage}
	\begin{minipage}[c]{0.4\textwidth}
		\centering
		\small
		\begin{equation*}
		\begin{gathered}
		(1,4,2), \ (1,6,3),\ (2,6,3), \\ 
        (4,3,5),\ (4,3,6),\ (5,1,6), 
		\end{gathered}
		\end{equation*}
	\end{minipage}
	\\
	\vspace{5mm}
	\begin{minipage}{\textwidth}
		\small
		\begin{equation*}
		\begin{split}
		w_{2,6} + w_{3,6} & < w_{2,4} + w_{3,4}\\
		w_{1,5} + w_{1,6} & < w_{3,5} + w_{3,6}\\
		w_{1,4} + w_{2,4} & < w_{1,6} + w_{2,6}\\
		w_{3,4} + w_{3,5}  & < w_{1,4} + w_{1,5}
		\end{split}
		\end{equation*}
	\end{minipage}
	\begin{center}
		\line(1,0){400}
	\end{center}

	  \begin{minipage}[c]{0.4\textwidth}
		\centering
		\begin{tikzpicture}[scale=0.34, every node/.style={scale=0.34}]

		\node[draw,circle,minimum size=.5cm,inner sep=1pt] (1) at (0*360/3 +90: 5cm)[scale =2] {$1$};
		\node[draw,circle,minimum size=.5cm,inner sep=1pt] (3) at (1*360/3 +90: 5cm)[scale =2] {$3$};
		\node[draw,circle,minimum size=.5cm,inner sep=1pt] (5) at (2*360/3 +90: 5cm) [scale =2]{$5$};
		\node[draw,circle,minimum size=.5cm,inner sep=1pt] (2) at ($(1)!0.5!(3)$)[scale =2] {$2$};
		\node[draw,circle,minimum size=.5cm,inner sep=1pt] (4) at ($(3)!0.5!(5)$)[scale =2] {$4$};
		\node[draw,circle,minimum size=.5cm,inner sep=1pt] (6) at ($(1)!0.5!(5)$)[scale =2] {$6$};
		\node[draw,circle,minimum size=.5cm,inner sep=1pt] (7) at (0,0)[scale =2] {$7$};

		\draw [line width=2pt,-] (1) -- (2);
		\draw [line width=2pt,-] (1) -- (6);
		\draw [line width=2pt,-] (1) -- (7);
		\draw [line width=2pt,-] (2) -- (3);
		\draw [line width=2pt,-] (3) -- (4);
		\draw [line width=2pt,-] (3) -- (7);
		\draw [line width=2pt,-] (4) -- (5);
		\draw [line width=2pt,-] (5) -- (6);
		\draw [line width=2pt,-] (5) -- (7);

		\end{tikzpicture}
	\end{minipage}
	\begin{minipage}[c]{0.4\textwidth}
		\centering
		\small
		\begin{equation*}
		\begin{gathered}
		(1,2,3), \ (1,6,5,4), \ (1,6,5),\ (2,3,4),  \\
        (2,3,4,5),\ (2,1,6),\ (2,1,7),\  (3,4,5), \\ 
        (3,2,1,6),\ (4,5,6),\ (4,3,7),\ (6,5,7)
		\end{gathered}
		\end{equation*}
	\end{minipage}
	\\
	\vspace{5mm}
	\begin{minipage}{\textwidth}
		\small
		\begin{equation*}
		\begin{split}
	 w_{3,4}+w_{3,7} & < w_{4,5} + w_{5,7}\\
		w_{1,2} + w_{1,7} & <  w_{2,3} + w_{3,7}\\
		w_{5,6} + w_{5,7} & < w_{1,6} + w_{1,7}\\
		w_{1,6}+w_{5,6} + w_{4,5}  & < w_{1,2} + w_{2,3} + w_{3,4}\\
		w_{2,3}+w_{3,4}  + w_{4,5} & < w_{1,2} + w_{1,6} + w_{5,6}\\
		w_{2,3} + w_{1,2} + w_{1,6} & < w_{3,4} + w_{4,5} + w_{5,6}
		\end{split}
		\end{equation*}
	\end{minipage}

	\begin{center}
		\line(1,0){400}
	\end{center}
	 \begin{minipage}[c]{0.4\textwidth}
		\centering
		\begin{tikzpicture}[scale=0.34, every node/.style={scale=0.34}]

		\node[draw,circle,minimum size=.5cm,inner sep=1pt] (1) at (0*360/3 +90: 5cm) [scale =2] {$1$};
		\node[draw,circle,minimum size=.5cm,inner sep=1pt] (3) at (1*360/3 +90: 5cm) [scale =2] {$3$};
		\node[draw,circle,minimum size=.5cm,inner sep=1pt] (4) at (2*360/3 +90: 5cm) [scale =2] {$4$};
		\node[draw,circle,minimum size=.5cm,inner sep=1pt] (7) at (0,0) [scale =2] {$7$};
		\node[draw,circle,minimum size=.5cm,inner sep=1pt] (2) at ($(1)!.5!(3)$)  [scale =2]{$2$};
		\node[draw,circle,minimum size=.5cm,inner sep=1pt] (5) at ($(1)!.5!(4)$) [scale =2] {$5$};
		\node[draw,circle,minimum size=.5cm,inner sep=1pt] (6) at ($(1)!.5!(7)$) [scale =2] {$6$};

		\draw [line width=2pt,-] (1) -- (2);
		\draw [line width=2pt,-] (1) -- (5);
		\draw [line width=2pt,-] (1) -- (6);
		\draw [line width=2pt,-] (2) -- (3);
		\draw [line width=2pt,-] (3) -- (4);
		\draw [line width=2pt,-] (3) -- (7);
		\draw [line width=2pt,-] (4) -- (5);
		\draw [line width=2pt,-] (4) -- (7);
		\draw [line width=2pt,-] (6) -- (7);

		\end{tikzpicture}
	\end{minipage}
	\begin{minipage}[c]{0.4\textwidth}
		\centering
		\small
		\begin{equation*}
		\begin{gathered}
			(1, 2, 3), \ (1, 5, 4), \  (1, 6, 7), \ (2, 1, 5,4), \\
			(2, 1, 5), \ (2, 1, 6), \ (2, 3, 7),\ (3, 4, 5),  \\ 
              (3, 2, 1, 6),\ (4, 7, 6),  \ (5, 1, 6), \ (5, 1, 6, 7)
		\end{gathered}
		\end{equation*}
	\end{minipage}
	\\
	\vspace{4mm}
	\begin{minipage}{\textwidth}
		\small
		\begin{equation*}
		\begin{split}
		w_{2,3} + w_{1,2} + w_{1,6}  & < w_{3,7} + w_{6,7}\\
		w_{1,5} + w_{1,6} + w_{6,7}  & < w_{4,5} + w_{4,7}\\
		w_{1,2} + w_{1,5} + w_{4,5} & < w_{2,3} + w_{3,4}\\
		w_{4,7} + w_{6,7} & < w_{4,5} + w_{1,5} + w_{1,6}\\
		w_{3,4} + w_{4,5} &< w_{2,3} + w_{1,2} + w_{1,5}\\
		  w_{2,3} + w_{3,7} &< w_{1,2} + w_{1,6} + w_{6,7}
		\end{split}
		\end{equation*}
	\end{minipage}
	\begin{center}
		\line(1,0){400}
	\end{center}
	\begin{minipage}[c]{0.4\textwidth}
		\centering
		\begin{tikzpicture}[scale=0.35, every node/.style={scale=0.35}]

		\node[draw,circle,minimum size=.5cm,inner sep=1pt] (1) at (0*360/5 +90: 5cm) [scale=2]{$1$};
		\node[draw,circle,minimum size=.5cm,inner sep=1pt] (2) at (1*360/5 +90: 5cm) [scale=2]{$2$};
		\node[draw,circle,minimum size=.5cm,inner sep=1pt] (3) at (2*360/5 +90: 5cm) [scale=2]{$3$};
		\node[draw,circle,minimum size=.5cm,inner sep=1pt] (4) at (3*360/5 +90: 5cm) [scale=2]{$4$};
		\node[draw,circle,minimum size=.5cm,inner sep=1pt] (5) at (4*360/5 +90: 5cm) [scale=2]{$5$};
		
		\node[draw,circle,minimum size=.5cm,inner sep=1pt] (6) at ($(1)!0.5!(3)$)[scale=2] {$6$};
		\node[draw,circle,minimum size=.5cm,inner sep=1pt] (7) at  ($(1)!0.5!(4)$) [scale=2]{$7$};

		\draw [line width=2pt,-] (1) -- (2);
		\draw [line width=2pt,-] (2) -- (3);
		\draw [line width=2pt,-] (3) -- (4);
		\draw [line width=2pt,-] (4) -- (5);
		\draw [line width=2pt,-] (5) -- (1);
		\draw [line width=2pt,-] (1) -- (6);
		\draw [line width=2pt,-] (3) -- (6);
		\draw [line width=2pt,-] (1) -- (7);
		\draw [line width=2pt,-] (4) -- (7);
		
		\end{tikzpicture}
	\end{minipage}
	\begin{minipage}[c]{0.4\textwidth}
		\small
		\begin{equation*}
		\begin{gathered}
			(1, 6, 3), \ (1, 7, 4), \ (2, 3, 4), \ (2, 1, 5), \\
			 (2, 3, 6), \ (2, 3, 4, 7), \  (3, 4, 5), \ (3, 4, 7), \\
			  (4, 3, 6), \ (5, 4, 3, 6), \ (5, 4, 7),\ (6, 1, 7)
		\end{gathered}
		\end{equation*}
	\end{minipage}
	\\
	\vspace{4mm}
	\begin{minipage}{\textwidth}
		\small
		\begin{equation*}
		\begin{split}
			w_{1,2} + w_{1,5}& < w_{2,3} + w_{3,4} + w_{4,5}\\
			w_{2,3} + w_{3,4} + w_{4,7}& < w_{1,2} + w_{1,7}\\
			w_{4,5}+w_{3,4} + w_{3,6}& < w_{1,5} + w_{1,6}\\
			w_{1,6} + w_{1,7}& < w_{3,6} + w_{3,4} + w_{4,7}
		\end{split}
		\end{equation*}
	\end{minipage}
	\begin{center}
		\line(1,0){400}
	\end{center}
    
		\begin{minipage}[c]{0.4\textwidth}
			\centering
			\begin{tikzpicture}[scale=0.35, every node/.style={scale=0.35}]

			\node[draw,circle,minimum size=.5cm,inner sep=1pt] (1) at (0*360/3 +90: 5cm) [scale =2]{$1$};
			\node[draw,circle,minimum size=.5cm,inner sep=1pt] (3) at (1*360/3 +90: 5cm) [scale =2]{$3$};
			\node[draw,circle,minimum size=.5cm,inner sep=1pt] (4) at (2*360/3 +90: 5cm) [scale =2]{$4$};
			\node[draw,circle,minimum size=.5cm,inner sep=1pt] (2) at ($(1)!0.5!(3)$) [scale =2]{$2$};
			\node[draw,circle,minimum size=.5cm,inner sep=1pt] (5) at ($(1)!0.5!(4)$) [scale =2]{$5$};
			\node[draw,circle,minimum size=.5cm,inner sep=1pt] (8) at (0,0) [scale =2]{$8$};
			\node[draw,circle,minimum size=.5cm,inner sep=1pt] (6) at ($(3)!0.5!(8)$) [scale =2]{$6$};
			\node[draw,circle,minimum size=.5cm,inner sep=1pt] (7) at ($(4)!0.5!(8)$) [scale =2]{$7$};

			\draw [line width=2pt,-] (1) -- (2);
			\draw [line width=2pt,-] (1) -- (5);
			\draw [line width=2pt,-] (1) -- (8);
			\draw [line width=2pt,-] (2) -- (3);
			\draw [line width=2pt,-] (3) -- (4);
			\draw [line width=2pt,-] (3) -- (6);
			\draw [line width=2pt,-] (4) -- (5);
			\draw [line width=2pt,-] (4) -- (7);
			\draw [line width=2pt,-] (6) -- (8);
			\draw [line width=2pt,-] (7) -- (8);

			\end{tikzpicture}
		\end{minipage}
		\begin{minipage}[c]{0.4\textwidth}
			\centering
			\small
			\begin{equation*}
			\begin{gathered}
				(1, 2, 3), \ (1, 5, 4),\ (1, 8, 6),\ (1, 8, 7),\ (2, 1, 8), \\
                 (2, 3, 4),\ (2, 3, 4, 5),\ (2, 1, 8, 6),\ (2, 1, 8, 7), \\
                 (3, 4, 5),\  (3, 4, 7),\ (3, 2, 1, 8),\ (4, 3, 6),\   (4, 7, 8),\\
                 (5, 1, 8, 6),\ (5, 4, 7),\ (5, 1, 8),\ (6, 3, 4, 7)
			\end{gathered}
			\end{equation*}
		\end{minipage}
		\\
		\vspace{5mm}
		\begin{minipage}{\textwidth}
			\small
			\begin{equation*}
			\begin{split}
                    w_{1,2} + w_{1,8} + w_{7,8}&< w_{2, 3} + w_{3,4} +w_{4,7}\\
                    w_{1,5} + w_{1,8} + w_{6,8}& < w_{4,5} + w_{3,4} + w_{3,6} \\
                   w_{2,3} + w_{3,4} + w_{4,5} & < w_{1,2} + w_{1,5}  \\
                   w_{3,6} + w_{3,4} + w_{4,7}& < w_{6,8} + w_{7,8} \\
                  w_{4,7} + w_{7,8}& < w_{4, 5} + w_{1,5} +w_{1, 8} \\
                  w_{1,5} + w_{4,5} & < w_{1, 8} +w_{7,8}+w_{4,7}  
			\end{split}
			\end{equation*}
		\end{minipage}
		\begin{center}
			\line(1,0){400}
		\end{center}
	\begin{minipage}[c]{0.4\textwidth}
		\centering
		\begin{tikzpicture}[scale=0.38, every node/.style={scale=0.38}]
		\def \r{3.5}
		\def \s{3.25}
		\def \t{1.5}
		
		\node[draw,circle,minimum size=.5cm,inner sep=1pt] (1) at (-\s,\r) [scale=2]{$1$};
		\node[draw,circle,minimum size=.5cm,inner sep=1pt] (2) at (-\s - \t,0) [scale=2]{$2$};
		\node[draw,circle,minimum size=.5cm,inner sep=1pt] (3) at (-\s,-\r) [scale=2]{$3$};
		\node[draw,circle,minimum size=.5cm,inner sep=1pt] (4) at (-\s+\t,0) [scale=2]{$4$};
		\node[draw,circle,minimum size=.5cm,inner sep=1pt] (5) at (\s,\r) [scale=2]{$5$};
		\node[draw,circle,minimum size=.5cm,inner sep=1pt] (6) at (\s - \t,0) [scale=2]{$6$};
		\node[draw,circle,minimum size=.5cm,inner sep=1pt] (7) at (\s,-\r) [scale=2]{$7$};
		\node[draw,circle,minimum size=.5cm,inner sep=1pt] (8) at (\s+\t,0) [scale=2]{$8$};

		\draw [line width=2pt,-] (1) -- (2);
		\draw [line width=2pt,-] (1) -- (4);
		\draw [line width=2pt,-] (1) -- (5);
		\draw [line width=2pt,-] (2) -- (3);
		\draw [line width=2pt,-] (3) -- (4);
		\draw [line width=2pt,-] (3) -- (7);
		\draw [line width=2pt,-] (5) -- (6);
		\draw [line width=2pt,-] (5) -- (8);
		\draw [line width=2pt,-] (6) -- (7);
		\draw [line width=2pt,-] (7) -- (8);

		\end{tikzpicture}
	\end{minipage}
	\begin{minipage}[c]{0.4\textwidth}
		\centering
		\small
		\begin{equation*}
		\begin{gathered}
			(1, 2, 3), \  (1, 5, 6),\ (1, 2, 3, 7),  \ (1, 5, 8), \ (2, 3, 4),  \\ 
			  (2, 1, 5),  \  (2, 3, 7, 6),\ (2, 3, 7), \ (2, 1, 5, 8), \\
			  (3, 7, 6, 5),\ (3, 7, 6), \   (3, 7, 8), \ (4, 1, 5), \\
			  (4, 1, 5, 6), \  (4, 3, 7), \ (4, 3, 7, 8), \ (5, 6, 7),  \ (6, 5, 8)
		\end{gathered}
		\end{equation*}
	\end{minipage}
	\\
	\vspace{5mm}
	\begin{minipage}{\textwidth}
		\small
		\begin{equation*}
		\begin{split}
			w_{2,3} + w_{3,7} + w_{6,7}& < w_{1,2} + w_{1,5} + w_{5,6}\\
			w_{1,2} + w_{1,5} + w_{5,8}& < w_{2,3} + w_{3,7} + w_{7,8}\\
			w_{1,4} + w_{1,5} + w_{5,6}& < w_{3,4} + w_{3,7} + w_{6,7}\\
			w_{3,4} + w_{3,7} + w_{7,8}& < w_{1,4} + w_{1,5} + w_{5,8}
		\end{split}
		\end{equation*}
	\end{minipage}
	\begin{center}
		\line(1,0){400}
	\end{center}
	\begin{minipage}[c]{0.4\textwidth}
		\centering
		\begin{tikzpicture}[scale=0.35, every node/.style={scale=0.35}]

		\node[draw,circle,minimum size=.5cm,inner sep=1pt] (1) at (0*360/6 +180: 5cm) [scale =2] {$1$};
		\node[draw,circle,minimum size=.5cm,inner sep=1pt] (2) at (3*360/6 +180: 5cm) [scale =2] {$2$};
		\node[draw,circle,minimum size=.5cm,inner sep=1pt] (3) at (1*360/6 +180: 5cm) [scale =2] {$3$};
		\node[draw,circle,minimum size=.5cm,inner sep=1pt] (4) at (2*360/6 +180: 5cm)  [scale =2]{$4$};
		\node[draw,circle,minimum size=.5cm,inner sep=1pt] (5) at (5*360/6 +180: 5cm)  [scale =2]{$5$};
		\node[draw,circle,minimum size=.5cm,inner sep=1pt] (6) at (4*360/6 +180: 5cm)  [scale =2]{$6$};
		\node[draw,circle,minimum size=.5cm,inner sep=1pt] (7) at ($(1)!.33!(2)$)  [scale =2]{$7$};
		\node[draw,circle,minimum size=.5cm,inner sep=1pt] (8) at ($(1)!.66!(2)$)  [scale =2]{$8$};

		\draw [line width=2pt,-] (1) -- (3);
		\draw [line width=2pt,-] (1) -- (5);
		\draw [line width=2pt,-] (1) -- (7);
		\draw [line width=2pt,-] (2) -- (4);
		\draw [line width=2pt,-] (2) -- (6);
		\draw [line width=2pt,-] (2) -- (8);
		\draw [line width=2pt,-] (3) -- (4);
		\draw [line width=2pt,-] (5) -- (6);
		\draw [line width=2pt,-] (7) -- (8);

		\end{tikzpicture}
	\end{minipage}
	\begin{minipage}[c]{0.4\textwidth}
		\centering
		\small
		\begin{equation*}
		\begin{gathered}
		(1,7,8,2), \  (1,3,4),  \ (1,5,6),  \ (1,7,8),\ (2,4,3), \\
		  (2,6,5),\ (2,8,7), \  (3,1,5), \ (3,1,5,6), \ (3,1,7), \\
		   (3,4,2,8), \ (4,2,6,5), \ (4,2,6), \ (4,3,1,7), \ (4,2,8), \\
		    (5,1,7),  \ (5,1,7,8), \ (6,2,8,7), \ (6,2,8) 
		\end{gathered}
		\end{equation*}
	\end{minipage}
	\\
	\vspace{5mm}
	\begin{minipage}{\textwidth}
		\small
		\begin{equation*}
		\begin{split}
		w_{2,6} + w_{2,8} + w_{7,8} & < w_{5,6} + w_{1,5} + w_{1,7}\\
		w_{3,4} + w_{1,3} + w_{1,7} & < w_{2,4} + w_{2,8} + w_{7,8}\\
		w_{1,5} + w_{1,7} + w_{7,8}  & < w_{5,6} + w_{2,6} + w_{2,8}\\
		w_{2,4} + w_{2,6} + w_{5,6}  & < w_{3,4} + w_{1,3} + w_{1,5}\\
		w_{1,3} + w_{1,5} + w_{5,6}  & < w_{3,4} + w_{2,4} + w_{2,6}\\
		w_{3,4} + w_{2,4} + w_{2,8}  & < w_{1,3} + w_{1,7} + w_{7,8} 
		\end{split}
		\end{equation*}
	\end{minipage}
	\begin{center}
		\line(1,0){400}
	\end{center}
\newpage 
\printbibliography
\end{document}